\renewcommand{\norm}[1]{\left\lVert#1\right\rVert}
\renewcommand{\abs}[1]{\left\lvert#1\right\rvert}
\let\realItem\item 
\NewDocumentCommand\myItem{ o }{%
   \IfNoValueTF{#1}%
      {\realItem}
      {\realItem[#1]\def\@currentlabel{#1}}
}
\setlist[enumerate]{
    before=\let\item\myItem,       
    label=\textnormal{(\alph*)}, 
    widest=(2')                    
}
\pgfplotsset{compat=1.18}
\definecolor{intro_color1}{HTML}{1b9e77}
\definecolor{intro_color2}{HTML}{d95f02}
\definecolor{intro_color3}{HTML}{7570b3}
\definecolor{intro_color4}{HTML}{000000}
\definecolor{green_light}{HTML}{66c2a5}
\definecolor{orange_light}{HTML}{fc8d62}
\definecolor{purple_light}{HTML}{8da0cb}
\numberwithin{equation}{section}
\newcommand{\Div}{\divergence}
\newcommand{\dd}{\,\mathrm{d}}
\providecommand{\seminormtmp}[2]{{#1[{#2}#1]}}
\providecommand{\seminorm}[1]{\seminormtmp{}{#1}}
\newcommand{\disc}{\mathcal{D}}
\begin{document}

\title[Reaching the equilibrium]{Reaching the equilibrium: Long-term stable approximations for stochastic non-Newtonian Stokes equations with transport noise}

\author{Jerome Droniou}%
\address[J. Droniou]{IMAG, Univ. Montpellier, CNRS, Montpellier, France. School of Mathematics, Monash University, Australia}%
\email{jerome.droniou@umontpellier.fr}%

\author{Kim-Ngan Le}%
\address[N. Le]{School of Mathematics, Monash University, Australia}%
\email{ngan.le@monash.edu}%

\author{J\"{o}rn Wichmann}%
\address[J. Wichmann]{School of Mathematics, Monash University, Australia}%
\email{joern.wichmann@monash.edu}%

\thanks{This work was partially supported by the Australian Government through the Australian Research Council’s Discovery
Projects funding scheme (grant number DP220100937). }

\begin{abstract}
We propose and analyse a novel, fully discrete numerical algorithm for the approximation of the generalised Stokes system forced by transport noise -- a prototype model for non-Newtonian fluids including turbulence. Utilising the Gradient Discretisation Method, we show that the algorithm is long-term stable for a broad class of particular Gradient Discretisations. Building on the long-term stability and the derived continuity of the algorithm's solution operator, we construct two sequences of approximate invariant measures. At the moment, each sequence lacks one important feature: either the existence of a limit measure, or the invariance with respect to the discrete semigroup. We derive an abstract condition that merges both properties, recovering the existence of an invariant measure. We provide an example for which invariance and existence hold simultaneously, and characterise the invariant measure completely. We close the article by conducting two numerical experiments that show the influence of transport noise on the dynamics of power-law fluids; in particular, we find that transport noise enhances the dissipation of kinetic energy, the mixing of particles, as well as the size of vortices.  
\end{abstract}

\subjclass[2020]{%
    60H15, 
    60H35, 
    60J22, 
    65C20, 
    65C30, 
    65C40, 
    65M12, 
}

\keywords{stochastic fluid equations, non-Newtonian fluids, generalised Stokes system, transport noise, turbulence, equilibrium, invariant measure, long-term stability, Gradient Discretisation Method}

\maketitle

\section{Introduction}
Accurate fluid prediction is an important tool in many applications, from weather forecasting to aircraft design. However, due to the complexity of the dynamics, its mathematical description is anything but trivial. For the study of turbulence, different models have been proposed: stochastic fluid equations incorporate randomness to capture small scale features of turbulent flow. In the physics community, they have been introduced by Kraichnan in the late '50s~\cite{Kraichnan_1959}. Later, a rigorous mathematical consideration was initiated by Bensoussan and Temam~\cite{BENSOUSSAN1973195}. Since then, stochastic fluid equations have received broad attention from both communities; for relevant literature, we refer to the preface of the book~\cite{Flandoli2023}. 

One specific stochastic structure -- transport noise -- gained much interest in recent times, since it preserves key features from the deterministic situation such as the system's pathwise energy, while simultaneously accounting for the small scale features of turbulence. Its mathematical investigation has been initiated by Mikulevicius and Rozovskii~\cite{MikuRozov}. More recently, Flandoli and Pappalettera have shown that transport noise canonically emerges from additive small scale perturbations~\cite{Flandoli2022}.

In this article, we investigate the \textit{generalised Stokes equations} -- a prototype model for non-Newtonian fluids -- forced by transport noise and supplemented by general Dirichlet boundary conditions. We are particularly interested in the long time behaviour of this model. Intuitively, an initially perturbed fluid that isn't influenced by any external force (i.e., homogeneous Dirichlet boundary condition) will eventually evolve to its steady state. For stochastic equations, this steady state is an invariant measure; see, e.g., the book~\cite{DaPrato1996}. 

The theoretical investigation of invariant measures has flourished; see, e.g.,~\cite{Flandoli1995,Goldys2005,Hairer2006,Liu2011,Chekroun2012,Gess2016,Brzeniak2017,CotiZelati2019}. Most of the results address additive noises only. The reason is simple: the system under consideration needs to behave well for arbitrarily large time horizons. This is achieved by ensuring that the dissipation is sufficiently dominant compared to the intensity of the noise. 
Multiplicative noises generally don't allow the system to be well behaved; and consequently, no invariant measure exists.

But transport noise is different. A formal calculation (see Section~\ref{sec:long-stable-motiv}) reveals that transport noise leads to an energy (in)equality, which strongly suggests the existence of an invariant measure even under inhomogeneous Dirichlet boundary conditions. To the best of our knowledge, nothing is known about this measure: Does it exist rigorously? Is it unique? What are its properties? How can we construct it?

In this article, we turn our attention towards the last mentioned question: we address a computable construction of approximate invariant measures. The construction is founded on a novel numerical algorithm that mimics the energy (in)equality on the discrete level. The algorithm builds on the seminal works of Crank and Nicolson~\cite{Crank1947}, and Temam~\cite{Temam1968}, in combination with a generic Gradient Discretisation (GD); see, e.g., the book~\cite{Droniou2018}. We trace the dependence of the constants on the discretisation parameters (time horizon, time step size, and spatial discretisation) explicitly. This has the advantage that we can pass with discretisation parameters to their corresponding asymptotes individually; for instance, the validity of the energy (in)equality guarantees the \textit{long-term} stability of the algorithm for arbitrary large time horizons. Consequently, already for fixed time step size and spatial discretisation, one can discuss the existence of an invariant measure for the discretised semigroup generated by the algorithm. This is favourable since it enables a direct comparison of the invariant measures for the discrete and continuous dynamics. 

\subsection{Our contributions} Utilising the Gradient Discretisation Method (GDM), we verify that the newly proposed algorithm is long-term stable for a broad class of particular GDs. Since the algorithm has two intrinsic time scales (as we discuss in more details in Section~\ref{sec:long-term}), it gives rise to two canonical sequences of measures that are both promising constructors for the discrete invariant measure. The first sequence of measures is well prepared for probabilistic arguments. Hence, we show that any accumulation point of this sequence of measures is necessarily invariant; the second sequence of measures enjoys improved regularity which implies tightness of the sequence of measures. Consequently, the second sequence of measures has an accumulation point. However, for general boundary conditions, we are currently not able to prove the existence of a limit measure and the invariance with respect to the discrete semigroup for both sequences simultaneously. For homogeneous boundary conditions, we verify the convergence of both sequences to the unique invariant measure; in this case, we additionally characterise the invariant measure completely. We accommodate the theoretical findings with numerical simulations. In summary, our main contributions in this article include:
\begin{itemize}
    \item the design of a novel and long-term stable algorithm; 
    \item the theoretical investigation of the algorithm's solution operator;  
    \item the construction and feature verification of approximate invariant measures;
    \item and the validation of the algorithm based on two numerical experiments. 
\end{itemize}
To the best of our knowledge, this is the first result that addresses computable approximations of the invariant measure for stochastic non-Newtonian fluids. 

\subsection{Related literature}

\textbf{A unified analytical framework: GDM.}
The main advantage of the GDM is a clear detachment of theory and application: by replacing a concrete spatial discretisation with a proxy (the GD), one derives model specific constraints on the GD through the investigation of the generic algorithm. In this way, the theoretical results are true for many particular GDs, as long as they match the constraints. Thus, the GDM eliminates the need for case-by-case studies of particular discretisations, and enables a unified analytical framework. If one wants to use the algorithm in applications, then one selects \textit{a posteriori} a suitable GD that satisfies all constraints. Only this selection has to be done case-by-case. 

The GDM has found broad success in the analysis of discretisations for various deterministic equations, e.g.,  porous media equations~\cite{DroniouLe2020}, linear advection problems~\cite{DroniouRobert2019}, poro-mechanical models~\cite{Bonaldi2021}, and Stokes equations~\cite{Droniou2015}. For more references and details, we refer to the book~\cite{Droniou2018}. 
Its use for stochastic equations is less established. Only recently, the GDM was used to analyse discretisations for stochastic evolution equations driven by Leray--Lions operators~\cite{MR4410739}, and the stochastic Stefan problem~\cite{Droniou2024}.

\textbf{Numerical algorithms for SPDEs and their relation to invariant measures.}
In recent years, many authors have contributed towards the design and analysis of numerical algorithms for stochastic partial differential equations (SPDEs). A non-exhaustive list is given by:
\cite{Jentzen2009,MR2465711,Majee2017,MR3843574,MR4298537,MR4261330,Diening2022Averaged,MR4565984,Klioba2024}. Most of these results address the convergence of the \textit{strong error} of the respective algorithms on a fixed time horizon. But for the construction of an invariant measure, the dependence on the time horizon needs to be traced explicitly. Especially, as soon as a Gronwall argument is involved (which is the case for most non-linear SPDEs and multiplicative noises), the constants grow too rapidly.

The analysis of the \textit{weak error} of numerical algorithms is often linked to the investigation of approximate invariant measures. For example, in a sequence of works, Bréhier and collaborators analysed various time discretisations such as the implicit Euler scheme and explicit tamed exponential Euler scheme for the approximation of semi-linear SPDEs forced by additive noise; see, e.g.,~\cite{Brhier2013,Brhier2016,BrhierSample2016,Brhier2022,Brhier2024}. They showed that these algorithms can be used to construct approximate invariant measures and quantified their convergence rates by investigating the weak error of the schemes. Additional related results can be found, e.g., in~\cite{ChenWang2016,KovcsLang2020,Chen2020,Cui2021,Chen2023} and the references therein. We want to emphasize that the weak error analysis of numerical algorithms is of its own interest. More details can be found, e.g., in~\cite{Debussche2008,Debussche2010,Wang2015} and the book~\cite{Kruse2014}. 

An extension of the error analysis of numerical algorithms for stochastic fluid equations, such as the Navier--Stokes and (generalised) Stokes systems, is by far non-trivial. It itself has received broad attention; see, e.g.,~\cite{Brzezniak2013,BreitDod2021,Bessaih2022,Doghman2022,breit2024means} and~\cite{Feng2021,Feng2022,Le2024Spacetime,Li2024} for Navier--Stokes and (generalised) Stokes systems, respectively. We highlight the recent article~\cite{breit2024means}; in it, the authors derive and analyse a time-discrete algorithm for the approximation of the Navier--Stokes system forced by transport noise. As it turns out, transport noise enables the algorithm to converge optimally, i.e., the convergence rate of the strong error is of order $1/2$. 

The numerical approximation of invariant measures for stochastic fluid equations is widely unexplored. The only result we could find is the recent work of Glatt-Holtz and Mondaini~\cite{GlattHoltz2024}, where they investigate a semi-implicit in time and spectral Galerkin in space numerical approximation. They show that the discrete semigroup, generated by their discretisation, gives rise to an invariant measure. However, their method is tailored for additive noise and it cannot be utilised for transport noise.  

\subsection{Structure}
The remaining article is structured as follows: We start by presenting the model in Section~\ref{sec:Setup-Motivation}. Moreover, we introduce notation and give a heuristic argument on why the generalised Stokes system forced by transport noise has an invariant measure; the next section (Section~\ref{sec:disc-tools}) contains details about time and space discretisations. Additionally, we present the fully discrete algorithm, as well as its long-term stability; Section~\ref{sec:long-term} introduces concepts for the long-term dynamics of the algorithm, including the discrete semigroup and invariant measure. We present two constructors for approximate invariant measures and show that they satisfy invariance and existence, respectively. We close the section by discussing a sufficient condition to merge both properties, and by noting that this condition is satisfied for homogeneous boundary conditions (often the only ones considered in numerical analysis of Stokes models), in which case the invariant measure is trivial; for the sake of completeness, however, and to highlight the particular challenges they pose, we opted to consider generic boundary conditions throughout the paper. Section~\ref{sec:proofs} is dedicated to the verification of the claims presented in the two previous sections. A key ingredient is the continuity of the solution operator generated by the algorithm, which we derive in full detail. Following the theoretical results, we conduct two numerical experiments in Section~\ref{sec:num-sim}. We introduce the experimental configurations, the concrete choices of parameters, as well as the results; finally, we close the article with a conclusion in Section~\ref{sec:conclusion}. We supplement the article with an appendix (Appendix~\ref{sec:algo}) that summarises the newly proposed algorithm, in the hope that this simplifies its use in applications.

\section{Setup and Motivation} \label{sec:Setup-Motivation}
Let $\big( \Omega, (\mathcal{F}_t)_{t \geq 0}, \mathcal{F}, \mathbb{P} \big)$ be a filtered probability space that satisfies the usual conditions. All probabilities are expressed with respect to this ambient probability space. 

\subsection{Model}
Let $\mathcal{O} \subset \mathbb{R}^n$, $n \in \mathbb{N}$, be a bounded polygonal domain. Moreover, let $u_0$, $g$ and $\sigma$ be deterministic and time-independent vector fields that prescribe initial condition, boundary condition, and noise coefficient, respectively. Finally, let $W$ be an $(\mathcal{F}_t)_{t \geq 0}$-Wiener process. The generalised Stokes system with inhomogeneous Dirichlet boundary condition and random transport noise is given by:
\begin{alignat}{2} \label{eq:gen-Stokes}
\begin{aligned}
    \dd u - \left[\Div S(\varepsilon u)  -  \nabla \zeta \right] \dd t&= (\sigma \cdot \nabla)  u \circ \dd W(t) \qquad &&\text{ in } (0,\infty) \times \mathcal{O}, \\
    \Div u &= 0 &&\text{ in } (0,\infty) \times \mathcal{O}, \\
    u &= g &&\text{ in } (0,\infty) \times \partial \mathcal{O}, \\
    u(0) &= u_0 &&\text{ in } \mathcal{O},
\end{aligned}
\end{alignat}
where $u$, $S$, $\varepsilon u$, and $\zeta$ denote velocity, viscous stress tensor, strain rate tensor, and pressure, respectively. The stochastic integral is interpreted in the Stratonovich sense. The first and second equations encode the conservation of momentum and incompressibility of the fluid, respectively. 

The velocity is a random vector field $u: \Omega \times [0,\infty) \times \mathcal{O} \to \mathbb{R}^n$; the pressure is a random scalar $\zeta:  \Omega \times [0,\infty) \times \mathcal{O} \to \mathbb{R}$; the random matrix-valued strain rate tensor $\varepsilon u:\Omega \times [0,\infty) \times \mathcal{O} \to \mathbb{R}^{n\times n}$ is the symmetric gradient of velocity, that is, $\varepsilon u := \tfrac{\nabla u + (\nabla u)^T}{2}$; and the matrix-valued viscous stress tensor $S$ is a function of the strain rate tensor which depends on the fluid rheology. In this article, we solely consider power-law fluids with the following rheology: 
\begin{align} \label{eq:def-S}
    S(A) := (\kappa + \abs{A}^2)^{(p-2)/2} A, \qquad A \in \mathbb{R}^{n \times n},~\kappa \geq 0,~ p \in (1,\infty).
\end{align}
We will frequently use the following tensor:
\begin{align*}
    V(A) := (\kappa + \abs{A}^2)^{(p-2)/4} A, \quad A \in \mathbb{R}^{n \times n},~ \kappa \geq 0,~ p \in (1,\infty),
\end{align*}
which is closely related to the viscous stress tensor~$S$; see Lemma~\ref{lem:relation-of-tensors}.

Instead of discussing the generalised Stokes system with inhomogeneous Dirichlet boundary condition, we investigate the generalised Stokes system with homogeneous Dirichlet boundary condition but shifted spatial operators and time-integrated pressure. Introducing the new variables $v := u - g$ and $\pi :=\int \zeta \dd t $, we reformulate~\eqref{eq:gen-Stokes} equivalently as follows: 
\begin{alignat}{2} \label{eq:gen-Stokes-modified}
\begin{aligned}
    \dd v - \Div S(\varepsilon v + \varepsilon g) \dd t +  \dd \nabla \pi  &= (\sigma \cdot \nabla)  (v + g) \circ \dd W(t) \quad &&\text{ in } (0,\infty) \times \mathcal{O}, \\
    \Div v &= 0 &&\text{ in } (0,\infty) \times \mathcal{O}, \\
    v &= 0 &&\text{ in } (0,\infty) \times \partial \mathcal{O}, \\
    v(0) &= v^\mathrm{in} := u_0 - g &&\text{ in } \mathcal{O},
\end{aligned}
\end{alignat}
where we implicitly used that $g$ doesn't depend on time and is solenoidal. The unknowns of the transformed system of equations are the vector field~$v$ and time-integrated pressure~$\pi$. The lack of regularity for the original pressure motivates its substitution with the time-integrated pressure, which typically behaves better; see, e.g.,~\cite{Wichmann2024} and the references therein.  

Before we provide a formal argument on why the system~\eqref{eq:gen-Stokes-modified} has an invariant measure, we shortly introduce notations, classical function spaces, and the data assumption used throughout this article. 

\subsection{Notation}
The euclidean inner products for vectors and matrices are denoted by $a\cdot b := \sum_{j=1}^n a_j b_j$ and $A:B := \sum_{i,j=1}^n A_{i,j} B_{i,j}$, respectively. We write $f \lesssim g$ for two non-negative quantities $f$ and $g$ if $f$ is bounded by $g$ up to a multiplicative constant. If the constant depends on a parameter~$q$, then we write $f \lesssim_q g$. Accordingly we define $\gtrsim$ and $\eqsim$. We denote by $c$ and $C$ generic constants which can change their values from line to line. 

\subsection{Function spaces}
We don't distinguish scalar-, vector-, and matrix-valued functions. Let $C^\infty(\mathcal{O})$ be the space of smooth vector fields. Similarly, let $C^\infty_c(\mathcal{O})$ be the space of smooth and compactly supported vector fields. We incorporate incompressibility and boundary condition for velocity and mean-free condition for pressure in the functional analytic framework; i.e., we define velocity and pressure spaces, for $k \in \mathbb{N}_0$ and $q \in [1,\infty]$, by:
\begin{align*}
\mathbb{W}^{k,q}_{0}(\mathcal{O}) &:=\overline{\left\{ u \in C^\infty_c(\mathcal{O}) \right\}}^{\norm{\cdot}_{W^{k,q}(\mathcal{O})}}, \\
\mathbb{W}^{k,q}_{0,\Div}(\mathcal{O}) &:=\overline{\left\{ u \in C^\infty_c(\mathcal{O}):~\Div u = 0 ~\text{ in } \mathcal{O} \right\}}^{\norm{\cdot}_{W^{k,q}(\mathcal{O})}}, \\
\mathbb{W}^{k,q}_{\Div}(\mathcal{O}) &:=\overline{\left\{ u \in C^\infty(\mathcal{O}):~\Div u = 0 ~\text{ in } \mathcal{O}\right\}}^{\norm{\cdot}_{W^{k,q}(\mathcal{O})}},
\end{align*}
and
\begin{align*}
L^q_0(\mathcal{O}) &:=\overline{\left\{ \pi \in C^\infty(\mathcal{O}):~\int_{\mathcal O}\pi \dd x = 0 \right\}}^{\norm{\cdot}_{L^{q}(\mathcal{O})}},
\end{align*}
respectively. Here, $\norm{\cdot}_{L^q(\mathcal{O})}$ and $\norm{\cdot}_{W^{k,q}(\mathcal{O})}$ denote the classical Lebesgue and Sobolev norms, respectively. For $q < \infty$, we abbreviate $\mathbb{L}^q(\mathcal{O}) = \mathbb{W}^{0,q}_{0}(\mathcal{O})$ and $\mathbb{L}^q_{\Div}(\mathcal{O}) = \mathbb{W}^{0,q}_{\Div}(\mathcal{O})$. We write $\left( \cdot, \cdot \right)$ for the inner products in $L^2(\mathcal{O})$, $\mathbb{L}^2(\mathcal{O})$, and $\big( \mathbb{L}^2(\mathcal{O}) \big)^n$.

We denote the space of strongly continuous and bounded functions on $\mathbb{L}^2(\mathcal{O})$ by~$C_b\big(\mathbb{L}^2(\mathcal{O}) \big)$. The Borel $\sigma$-algebra on $\mathbb{L}^2(\mathcal{O})$ is denoted by $\mathcal{B}\big( \mathbb{L}^2(\mathcal{O})\big)$.

\subsection{Data assumption} 
Throughout this article, we restrict ourselves to the following data.
\begin{assumption}[Data conditions] \label{ass:data-strict}
Let the data satisfy:
\begin{itemize}
        \item (boundary condition)~$g \in \mathbb{W}^{2,\infty}_{\Div}(\mathcal{O})$;
    \item (noise coefficient)~$\sigma \in \mathbb{W}^{1,\infty}_{\Div}(\mathcal{O})$;
    \item (analytic initial velocity)~$v^\mathrm{in} \in \mathbb{L}_{\Div}^2(\mathcal{O})$. 
\end{itemize}
\end{assumption}

\subsection{Long-term stability} \label{sec:long-stable-motiv}
The classical construction of an invariant measure via ergodic averages requires the system to be stable for arbitrary large times. This is the reason why, typically, only additive noises have been considered previously. Here, we give a short argument that motivates the existence of an invariant measure for transport noise.

An application of It\^o's formula for $u \mapsto \norm{u}^2_{L^2(\mathcal{O})}$ shows formally:
\begin{align*}
    &\norm{v_t}^2_{L^2(\mathcal{O})} + 2\int_0^t \int_{\mathcal{O}} S(\varepsilon v + \varepsilon g) : \varepsilon v \dd x \dd s \\
    &\quad = \norm{v_0}^2_{L^2(\mathcal{O})} + 2\int_0^t \int_{\mathcal{O}} (\sigma \cdot \nabla)g \cdot v \dd x \circ \dd W_s + 2\int_0^t \int_{\mathcal{O}} (\sigma \cdot \nabla)v \cdot v \dd x \circ \dd W_s.
\end{align*}
Importantly, since $\Div \sigma = 0$ it follows that
\begin{align*}
    \int_0^t \int_{\mathcal{O}} (\sigma \cdot \nabla)v \cdot v \dd x \circ \dd W_s = -  \int_0^t \int_{\mathcal{O}} \Div \sigma \frac{\abs{v}^2}{2} \dd x \circ \dd W_s = 0,
\end{align*}
which eliminates the non-linear stochastic integral in the energy identity. 

In particular, if the boundary condition is trivial, then the energy identity reads:
\begin{align*}
    \norm{v_t}^2_{L^2(\mathcal{O})} + 2\int_0^t \int_{\mathcal{O}} S(\varepsilon v) : \varepsilon v \dd x \dd s  = \norm{v_0}^2_{L^2(\mathcal{O})}.
\end{align*}
Thus, even though the dynamics of the solution is driven by a stochastic process, the energy of the system is dissipated pathwise and monotonically. The dissipation rate is quantified by the non-linear tensor~$V$, since
\begin{align*}
    \int_{\mathcal{O}} S(\varepsilon v) : \varepsilon v \dd x = \norm{V(\varepsilon v)}_{L^2(\mathcal{O})}^2.
\end{align*}

Inhomogeneous boundary conditions destroy the pathwise energy identity; instead, one finds an energy inequality: 
\begin{align} \label{eq:inequality-energy}
    \mathbb{E}\left[ \norm{v_t}^2_{L^2(\mathcal{O})} + \int_0^t  \norm{V(\varepsilon v + \varepsilon g)}_{\mathbb{L}^2(\mathcal{O})}^2 \dd s \right] \lesssim \mathbb{E}\left[\norm{v_0}^2_{L^2(\mathcal{O})} \right] + C(\sigma, g) t.
\end{align}
Crucially, the inequality grows at most linearly in the time horizon~$t$, which is sufficient for the derivation of an invariant measure.

For the derivation of Inequality~\eqref{eq:inequality-energy}, one needs to compute the expectation of the Stratonovich integral. Since It\^o integrals define centered martingales, their expectation vanishes and, thus, the expectation of the Stratonovich integral equals the expectation of the It\^o--Stratonovich corrector:
\begin{align*}
   \mathbb{E}{}&\left[ 2\int_0^t \int_{\mathcal{O}} (\sigma \cdot \nabla)g \cdot v \dd x \circ \dd W_s \right] \\
   ={}& \mathbb{E}\left[ \int_0^t \int_{\mathcal{O}} \mathfrak{P}[
   (\sigma \cdot \nabla)g] \cdot \mathfrak{P}[(\sigma \cdot \nabla) (v+g)] \dd x \dd s \right].
\end{align*}
Here, $\mathfrak{P}$ denotes the Helmholtz--Leray projection. Fortunately, the dissipation dominates the expectation of the Stratonovich integral, which eventually leads to~\eqref{eq:inequality-energy}. 

Certainly, many details are left aside and the arguments have to be justified for a rigorous derivation of the Inequality~\eqref{eq:inequality-energy}, as well as the energy identity for homogeneous boundary conditions. But the corresponding result for our numerical algorithm is rigorous and worked out in full detail; see Theorem~\ref{thm:apriori-vel} and its proof in Section~\ref{sec:stability-of-algo}.

\section{Discretisation} \label{sec:disc-tools}
In this section, we introduce an extension of the Gradient Discretisation Method (GDM) adapted to the stochastic generalised Stokes equations. Moreover, we discuss a special discretisation of the noise coefficient, which preserves key properties of the analytic solution on the discrete level. We close this section by presenting a new semi-implicit time-stepping algorithm, and establishing its stability for arbitrary large time horizons.

\subsection{Gradient scheme}
The GDM is a unified framework for the analysis of classical discretisation methods; see, e.g., the book~\cite{Droniou2018}. It includes in particular conforming and non-conforming finite elements, finite volumes, and mimetic finite differences. The GD for Stokes equations has been introduced in~\cite{Droniou2015}. We follow their approach but adapt the function spaces to our framework. In the remaining part of this section, let $p \in (1,\infty)$ be the fixed growth index of $S$; see~\eqref{eq:def-S}.

\begin{definition}[Gradient Discretisation (GD)]\label{def:gradient-discretisation}
A Gradient Discretisation~$\disc$, in short: GD, is defined by 
\begin{align*}
\disc = (X_{\disc,0}, Y_\disc, \nabla_\disc,\varepsilon_\disc,\chi_\disc, \Div_\disc,\Pi_\disc ),
\end{align*}
where:
\begin{enumerate}
\item The set of discrete velocity unknowns $X_{\disc,0}$ is a finite dimensional real vector space;
\item The set of discrete pressure unknowns $Y_\disc$ is a finite dimensional real vector space;
\item The linear mapping $\nabla_\disc:X_{\disc,0}\to \big(L^p(\mathcal{O})\big)^{n\times n}$ is the reconstruction of the discrete gradient;
\item \label{it:gdm-velocity-norm} The linear mapping $\varepsilon_\disc: X_{\disc,0} \to \big(L^p(\mathcal{O})\big)^{n\times n}$ is the reconstruction of the discrete symmetric gradient. It must be chosen such that $\Vert \varepsilon_\disc \cdot \Vert_{L^p(\mathcal{O})}$ is a norm on $X_{\disc,0}$;
\item \label{it:gdm-pressure-norm} The linear mapping $\chi_\disc: Y_\disc \to L^{p \wedge p'}(\mathcal{O})$  (with $p\wedge p':=\min(p,p')$ and $p' := p /(p-1)$) is the reconstruction of the approximate pressure. It must be chosen such that $\Vert \chi_\disc \cdot \Vert_{ L^{p \wedge p'}(\mathcal{O})}$ is a norm on $Y_{\disc,0}$, where $Y_{\disc,0}= \{ q \in Y_{\disc,0}: \int_{\mathcal{O}} \chi_\disc q \dd x = 0\}$ denotes the set of mean-value free discrete pressure unknowns;
\item The linear mapping $\Div_\disc: X_{\disc,0} \to L^p(\mathcal{O})$ is the reconstruction of the discrete divergence. We then define $E_{\disc,0} := \{ v \in X_{\disc,0}: \, \forall q \in Y_{\disc,0}: \left( \chi_\disc q, \Div_\disc v \right) = 0\} $ as the set of discretely divergence-free velocity unknowns;
\item The linear mapping $\Pi_\disc:X_{\disc,0}\to \big(L^2(\mathcal{O})\big)^{ n}$ is the reconstruction operator of the approximate velocity. It must be chosen such that $\Pi_\disc$ is one-to-one on~$E_{\disc,0}$.
\end{enumerate}
\end{definition}
Note that the one-to-one condition on $\Pi_\disc$ is one of the main differences of our framework compared to usual GDs; it is used in particular to ensure that the inverse inequality \eqref{eq:inverse-first} makes sense. In practice, however, this injectivity property is not a very restrictive assumption.

Reconstructions of different operators don't necessarily need to be strictly related to each other. But some notion of compatibility is needed to ensure well-posedness of algorithms based on GDs. Next, we introduce these compatibility conditions. They should be read as properties of a particular GD and need to be verified on a case-by-case basis. For the remaining part of this section, let $\disc$ be a GD. 

\begin{definition}[Coercivity constant] The \emph{'coercivity constant'}~$C_{\disc}(p)$ is defined by:
 \begin{align} \label{def:discrete-poincare}
C_{\disc}(p) := \sup_{v \in X_{\disc,0} } \frac{\norm{\Pi_\disc v}_{L^{p}(\mathcal{O})} + \norm{\Div_\disc v}_{L^{p}(\mathcal{O})} + \norm{\nabla_\disc v}_{L^{p}(\mathcal{O})}}{\norm{\varepsilon_\disc v}_{L^p(\mathcal{O})}}.
\end{align}  
\end{definition}

\begin{remark}
The coercivity constant quantifies in particular two well-known constants in the context of GDs: the Korn constant and the Poincaré constant. 
\end{remark}

\begin{definition}[Inf-sup constant] The \emph{'inf-sup constant'}~$\beta_\disc(p)$ is defined by:
\begin{align} \label{def:discrete-LBB}
\beta_\disc(p) := \inf_{q \in Y_{\disc,0} } \sup_{v \in X_{\disc,0} } \frac{ \left(\chi_\disc q, \Div_\disc v \right)}{\norm{\chi_\disc q}_{L^{p \wedge p'}(\mathcal{O})}(\norm{\varepsilon_\disc v}_{L^p(\mathcal{O})} + \norm{\varepsilon_\disc v}_{L^{p'}(\mathcal{O})})}.
\end{align}
The GD is called \emph{'inf-sup-stable'} if~$\beta_\disc(p) > 0$.
\end{definition}

\begin{definition}[Inverse estimate constant] \label{def:inverse-estimates}
The \emph{'inverse estimate constant'}~$\mathfrak{B}_\disc(p)$ is defined by:
\begin{align}
\label{eq:inverse-first}
       \mathfrak{B}_\disc(p) :={}& \sup_{v \in X_{\disc,0}} \frac{\norm{ \varepsilon_\disc v}_{L^p(\mathcal{O})}}{\norm{\Pi_\disc v}_{L^2(\mathcal{O})}}.
\end{align}
\end{definition}

\begin{remark}
This constant estimates the discrete derivative (of a discrete vector or a function) in terms of the Lebesgue norm of the vector/function. As a consequence, we expect it to blow up by a constant depending on the dimension of the discretisation space (for mesh-based methods, this would for example be as the inverse of the mesh size).
\end{remark}

\begin{definition}[$L^2$-projection onto discretely divergence free velocity] \label{def:l2-projection}
The \emph{'$L^2$-projection onto~$E_{\disc,0}$'} (or simply \emph{'$L^2$-projection'}) $\mathcal{K}_\disc : \mathbb{L}^2(\mathcal{O}) \to E_{\disc,0}$ is defined by: 
\begin{align} \label{eq:l2-projection-orthogonal}
    \forall \xi \in E_{\disc,0}: \quad \left( u - \Pi_\disc \mathcal{K}_\disc u, \Pi_\disc \xi \right) = 0.
\end{align}
The \emph{'Sobolev-stability constant of the $L^2$-projection'}~$C^{\mathrm{Sob}}_\disc(p) $ is defined by:
\begin{align} \label{eq:l2-proj-sobolev}
    C^{\mathrm{Sob}}_\disc(p) := \sup_{z \in \mathbb{W}^{1,p}(\mathcal{O})} \frac{\norm{\varepsilon_\disc \mathcal{K}_\disc z}_{L^{p}(\mathcal{O})}}{\norm{z}_{W^{1,p}(\mathcal{O})}}.
\end{align}
\end{definition}

\begin{remark}
The $L^2$-projection is a powerful tool in the analysis of numerical schemes of parabolic evolution equations and has been studied extensively for finite element spaces; see, e.g.,~\cite{Diening2021,Boman2006,Bank2013} and the references therein. However, its stability for constrained subspaces, such as $E_{\disc,0}$, remains to be explored. 
\end{remark}

Having introduced the compatibility conditions, we can state our choice of GDs, which we will assume from now on. 
\begin{assumption}[GD condition]
We assume that $\disc$ is $\Gamma$-stable, in the sense that there exists a constant $\Gamma \in [1,\infty)$ such that 
\begin{itemize}
    \item (controlled coercivity constants) $C_\disc(p) + C_\disc(p') \leq \Gamma$;
    \item (controlled inf-sup constant) $\beta_\disc(p) \geq \Gamma^{-1}$;
    \item (controlled Sobolev-stability constants) $C^{\mathrm{Sob}}_\disc(p) + C^{\mathrm{Sob}}_\disc(p') \leq \Gamma$.
\end{itemize}
\end{assumption}

\subsection{Noise coefficient}
An important ingredient in the energy evolution of the analytic solution is the cancellation of the stochastic integral: 
\begin{align*}
    \int_0^t \int_{\mathcal{O}} (\sigma \cdot \nabla)v \cdot v \dd x \circ \dd W_s = -  \int_0^t \int_{\mathcal{O}} \Div \sigma \frac{\abs{v}^2}{2} \dd x \circ \dd W_s = 0.
\end{align*}
This identity heavily relies on the validity of the integration by parts formula and~$\Div \sigma =~0$. However, an integration by parts formula is unavailable for generic GDs. We overcome this issue by invoking a symmetrisation strategy due to Temam~\cite{Temam1968}: First integrating by parts on the analytic level and then discretising yields
\begin{align} \nonumber
    \int_{\mathcal{O}} (\sigma \cdot \nabla)v \cdot \xi \dd x &= \frac{1}{2} \int_{\mathcal{O}} (\sigma \cdot \nabla)v \cdot \xi \dd x- \frac{1}{2} \int_{\mathcal{O}} (\sigma \cdot \nabla)\xi \cdot v \dd x \\ \label{def:discrete-noise-coefficient}
    &\approx \frac{1}{2} \left( \sigma^T\nabla_\disc v, \Pi_\disc \xi \right) - \frac{1}{2}  \left(  \Pi_\disc v, \sigma^T\nabla_\disc \xi  \right) =:  B_\disc^{\sigma}( v, \xi).
\end{align}
The bi-linear map~$B_\disc^{\sigma}:~X_{\disc,0} \times X_{\disc,0} \to \mathbb{R}$ is constructed such that it vanishes on its diagonal. This will eventually lead to the cancellation of the stochastic integral on the discrete level.

\subsection{Algorithm}
We propose the following fully-discrete algorithm, which combines the GDM with a semi-implicit time-stepping algorithm -- also called Crank--Nicolson scheme due to their seminal work~\cite{Crank1947} -- started at a suitably projected initial state. 

We assume that the analytic initial velocity~$v^\mathrm{in} \in \mathbb{L}^2(\mathcal{O})$ is given. 

\underline{Step 1:~Initialisation.} Since the Crank--Nicolson scheme doesn't introduce artificial numerical dissipation, it is sensitive with respect to initialisation, i.e., failure of matching constraints such as incompressibility or boundary conditions at initial time will persist at all times. At initialisation, we ensure validity of both properties by utilising the discrete Helmholtz decomposition. Let $(v_\disc^0, \pi_\disc^0) \in X_{\disc,0} \times Y_{\disc,0}$ be defined by:
\begin{subequations} \label{eq:disc-Helmholtz}
\begin{alignat}{2} \label{eq:disc-Helmholtz-01}
&\forall \xi \in X_{\disc,0}:\qquad \left( \Pi_\disc v^0_\disc, \Pi_\disc \xi \right) + \left( \chi_\disc \pi^0_\disc, \Div_\disc \xi \right) &&= \left( v^\mathrm{in}, \Pi_\disc \xi \right), \\  \label{eq:disc-Helmholtz-02}
 &\forall q \in Y_{\disc,0}:\qquad \left(  \Div_\disc v^0_\disc,\chi_\disc q \right) &&= 0.
\end{alignat}
\end{subequations}
Notice that, since $\Pi_\disc$ is one-to-one, the discrete Helmholtz decomposition is well-defined for any inf-sup stable GD. Moreover, $\norm{\Pi_\disc v_\disc^0}_{L^2(\mathcal{O})} \leq \norm{ v^{\mathrm{in}}}_{L^2(\mathcal{O})}$.

\underline{Step 2:~Time-stepping.} Let $\tau > 0$ be the time step size. For $n \in \mathbb{N}_0$, we recursively seek $(v_\disc^{n+1}, \pi_\disc^{n+1}) \in X_{\disc,0} \times Y_{\disc,0}$ such that $\mathbb{P}$-a.s.~for all $(\xi,q) \in X_{\disc,0} \times Y_{\disc,0}$:
\begin{subequations} \label{eq:time-stepping}
\begin{align} \label{eq:Evolution-01}
&\left( \Pi_\disc v^{n+1}_{\disc} -\Pi_\disc v^n_\disc, \Pi_\disc \xi \right) + \tau \left( S( \varepsilon_\disc v^{n+1/2}_\disc + \varepsilon g ), \varepsilon_\disc \xi \right)  \\ \nonumber
&\quad - \left( \chi_\disc \pi^{n+1}_{\disc} - \chi_\disc \pi^n_{\disc}, \Div_\disc \xi \right)  = \left[ B_\disc^\sigma( v^{n+1/2}_\disc, \xi)+ \left( (\sigma \cdot \nabla)g , \Pi_{\disc} \xi \right) \right] \Delta_{n+1} W ,\\ \label{eq:Evolution-02}
 & \left(  \Div_\disc v^{n+1/2}_\disc,\chi_\disc q \right) = 0,
\end{align}
\end{subequations}
where $v^{n+1/2}_\disc:= \frac{v^{n+1}_\disc + v^{n}_\disc}{2}$, $\Delta_{n+1} W := W(t_{n+1}) - W(t_n)$, and $t_n = n \tau$.

Solving this semi-implicit scheme requires us to find the solution to a non-linear system of equations, which itself is a computationally demanding task; nevertheless, we favour the semi-implicit scheme since it enables improved stability estimates for the full range of growth rates~$p \in (1,\infty)$. A summary of the tools needed for the implementation of the algorithm is given in Appendix~\ref{sec:algo}.

From a theoretical perspective, the well-posedness of the time-stepping scheme follows from standard monotone operator theory and the Banach-Necas-Babuska Theorem; see, e.g.,~\cite[Section~3]{MR3607728} and~\cite[Theorem~2.6]{Ern2004}, respectively. The former relies on the cancellation of the noise coefficient~$B^\sigma_\disc$ on its diagonal; and the latter requires an inf-sup stable GD. 

\subsection{Stability} \label{sec:stability}
While the existence is a first step towards concrete approximation results, quantitative stability results are needed for the construction and identification of limit variables such as limit velocity and pressure, and an invariant measure -- the steady state of the stochastic evolution dynamics. For the construction of the invariant measure, it is particularly important to trace the dependence of the stability constants on the time horizon. To keep the exposition at a reasonable length, we restrict ourselves to the discussion of velocity. 

The next theorem addresses the long-term stability of approximate velocity on two time scales: the evolution at integer and shifted integer times. Especially the second scale will become important for the construction of a limit measure -- a candidate for the invariant measure.   
\begin{theorem} \label{thm:apriori-vel}
Let $q > 0$ be a moment of interest.
There exists a constant $C >0$ such that for all $N \in \mathbb{N}$:
\begin{align} 
\begin{aligned}\label{eq:apriori-vel}
&\mathbb{E}\left[ \max_{n \leq N} \norm{\Pi_\disc v^{n}_\disc}_{L^2(\mathcal{O})}^{2q} + \left( \sum_{n=0}^{N-1} \tau  \norm{\varepsilon_\disc v^{n+1/2}_\disc}_{L^p(\mathcal{O})}^p \right)^{q} \right] \\
&\hspace{12em} \leq C \left(  \norm{ v^\mathrm{in}}_{L^2(\mathcal{O})}^{2q} + (\tau N)^q \right).
\end{aligned}
\end{align} 
Here $C$ depends on the selected moment, noise coefficient, boundary condition, $p$ and $\kappa$, volume of domain, and $\Gamma$-stability of the GD.
\end{theorem}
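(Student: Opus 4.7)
\medskip

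\noindent\textbf{Plan of the proof.} The overall strategy is a discrete energy identity derived by testing the evolution equation \eqref{eq:Evolution-01} against the midpoint velocity $\xi = v^{n+1/2}_\disc$, followed by summation in time, careful treatment of the stochastic terms, and moment estimates via Burkholder--Davis--Gundy (BDG). First I would exploit three structural cancellations simultaneously: (i) the discrete incompressibility \eqref{eq:Evolution-02} kills the pressure term $(\chi_\disc(\pi^{n+1}_\disc - \pi^n_\disc), \Div_\disc v^{n+1/2}_\disc) = 0$; (ii) the antisymmetric construction of $B^\sigma_\disc$ in \eqref{def:discrete-noise-coefficient} kills the nonlinear stochastic term, $B^\sigma_\disc(v^{n+1/2}_\disc, v^{n+1/2}_\disc) = 0$; and (iii) the midpoint time-discretisation $(\Pi_\disc v^{n+1}_\disc - \Pi_\disc v^n_\disc, \Pi_\disc v^{n+1/2}_\disc) = \tfrac12(\|\Pi_\disc v^{n+1}_\disc\|^2_{L^2} - \|\Pi_\disc v^n_\disc\|^2_{L^2})$ gives a perfect telescoping term. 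Together these produce the clean pathwise identity
\begin{equation*}
\tfrac12\bigl(\|\Pi_\disc v^{n+1}_\disc\|^2_{L^2} - \|\Pi_\disc v^n_\disc\|^2_{L^2}\bigr) + \tau(S(\varepsilon_\disc v^{n+1/2}_\disc + \varepsilon g), \varepsilon_\disc v^{n+1/2}_\disc) = \bigl((\sigma\cdot\nabla)g, \Pi_\disc v^{n+1/2}_\disc\bigr)\Delta_{n+1}W.
\end{equation*}

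\medskip

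Next, for the dissipation term I would split $\varepsilon_\disc v^{n+1/2}_\disc = (\varepsilon_\disc v^{n+1/2}_\disc + \varepsilon g) - \varepsilon g$ and use the relation between $S$ and $V$ (Lemma~\ref{lem:relation-of-tensors}) to bound it from below by $c\|V(\varepsilon_\disc v^{n+1/2}_\disc + \varepsilon g)\|^2_{L^2}$ minus a term involving $\varepsilon g$, which via Young's inequality absorbs into the dissipation at the cost of a constant depending on $g, p, \kappa$, and $|\mathcal{O}|$. Combined with the norm equivalence $\|V(A)\|^2_{L^2} \gtrsim \|A\|^p_{L^p} - C$, this produces the desired $\tau\|\varepsilon_\disc v^{n+1/2}_\disc\|^p_{L^p}$ contribution on the left-hand side.

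\medskip

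The delicate step is the stochastic right-hand side, since $v^{n+1/2}_\disc$ is not $\mathcal{F}_{t_n}$-measurable and the apparent martingale is corrupted by an It\^o--Stratonovich-type correction. I would decompose $\Pi_\disc v^{n+1/2}_\disc = \Pi_\disc v^n_\disc + \tfrac12(\Pi_\disc v^{n+1}_\disc - \Pi_\disc v^n_\disc)$: the first piece is an $\mathcal{F}_{t_n}$-adapted martingale increment and vanishes under expectation, while into the second piece I substitute the increment from \eqref{eq:Evolution-01} itself. The dominant contribution there is $\tfrac12((\sigma\cdot\nabla)g, \Pi_\disc \cdot)[B^\sigma_\disc(v^{n+1/2}_\disc,\cdot) + ((\sigma\cdot\nabla)g, \Pi_\disc \cdot)](\Delta_{n+1}W)^2$, whose expectation, via $\mathbb{E}[(\Delta_{n+1}W)^2] = \tau$, contributes $O(\tau)$ per step. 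The remaining deterministic and higher-order terms are controlled using $\|(\sigma\cdot\nabla)g\|_{L^2}$, the coercivity and inverse-inequality constants from $\Gamma$-stability, and Young's inequality to absorb into the dissipation. Summing over $n = 0,\ldots,N-1$ and taking expectations then yields
\begin{equation*}
\mathbb{E}\bigl[\|\Pi_\disc v^N_\disc\|^2_{L^2}\bigr] + \mathbb{E}\Bigl[\sum_{n=0}^{N-1}\tau\|\varepsilon_\disc v^{n+1/2}_\disc\|^p_{L^p}\Bigr] \lesssim \|v^\mathrm{in}\|^2_{L^2} + \tau N.
\end{equation*}

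\medskip

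Finally, to get the maximum inside the expectation and the $q$-th power, I would go back to the pathwise identity summed to $n \leq N$, isolate the martingale part $M_n := \sum_{k=0}^{n-1}((\sigma\cdot\nabla)g, \Pi_\disc v^k_\disc)\Delta_{k+1}W$ plus its correction, take the $q$-th power, then apply BDG to $M_n$: its quadratic variation is $\sum \tau\|(\sigma\cdot\nabla)g\|^2_{L^2}\|\Pi_\disc v^k_\disc\|^2_{L^2} \lesssim \tau N \cdot \max_k \|\Pi_\disc v^k_\disc\|^2_{L^2}$. A Young inequality with weight $\varepsilon$ on the $\max$ term allows absorption of half of it into the left-hand side, and an analogous BDG/Young chain handles the correction part (using the inverse estimate $\mathfrak{B}_\disc(p)$ to control $\Pi_\disc(v^{n+1}_\disc - v^n_\disc)$ by the dissipation). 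The main obstacle, I expect, is keeping track of the It\^o--Stratonovich correction \emph{without invoking any Gronwall inequality}: a naive bound on $(\Delta W)^2$ terms feeds back into $\|\Pi_\disc v^n_\disc\|^2$ multiplicatively and would give exponential-in-$N$ growth; the point is that this feedback must be absorbed by the dissipation, which is the crux of long-term stability and requires the $\Gamma$-stability assumption to be used sharply.
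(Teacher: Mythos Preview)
Your overall strategy matches the paper's proof closely: test with $v^{n+1/2}_\disc$, exploit the three cancellations, split the stochastic term into an adapted martingale plus an It\^o--Stratonovich-type corrector obtained by substituting the scheme back into itself, and close with BDG. There is, however, one genuine technical point you have glossed over and one suggestion that would weaken the result.

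The gap is in how you ``substitute the increment from \eqref{eq:Evolution-01}''. To use the scheme you need a test function $\xi \in E_{\disc,0}$, but $(\sigma\cdot\nabla)g$ is a continuous function, not a discrete vector. The paper handles this via the $L^2$-projection $\mathcal{K}_\disc$ onto $E_{\disc,0}$: since $v^{n+1}_\disc - v^n_\disc \in E_{\disc,0}$, one has $((\sigma\cdot\nabla)g, \Pi_\disc(v^{n+1}_\disc - v^n_\disc)) = (\Pi_\disc \mathcal{K}_\disc[(\sigma\cdot\nabla)g], \Pi_\disc(v^{n+1}_\disc - v^n_\disc))$, and then the scheme is tested with $\xi = \mathcal{K}_\disc[(\sigma\cdot\nabla)g]$. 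The resulting terms involve $\varepsilon_\disc \mathcal{K}_\disc[(\sigma\cdot\nabla)g]$ and $\nabla_\disc \mathcal{K}_\disc[(\sigma\cdot\nabla)g]$, which are controlled by the Sobolev-stability constant $C^{\mathrm{Sob}}_\disc$ of the projection---this is precisely why $C^{\mathrm{Sob}}_\disc$ is part of the $\Gamma$-stability assumption.

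Relatedly, your suggestion to use the inverse estimate $\mathfrak{B}_\disc(p)$ to control $\Pi_\disc(v^{n+1}_\disc - v^n_\disc)$ should be avoided: $\mathfrak{B}_\disc$ is \emph{not} part of $\Gamma$-stability (it blows up under mesh refinement), and the theorem asserts that $C$ depends only on $\Gamma$. The paper's route via $\mathcal{K}_\disc$ and the generalised Young inequality (with weight $|\Delta_{n+1}W|^{-1}$) absorbs all corrector terms into the dissipation without any inverse estimate, keeping the constant mesh-independent.
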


\begin{remark}
\begin{itemize}
    \item We want to emphasise that homogeneous boundary conditions lead to the pathwise energy equality: $\mathbb{P}$-a.s for all $N \in \mathbb{N}$,
\begin{align}\label{eq:path-stab}
\norm{\Pi_\disc v^N_\disc}_{L^2(\mathcal{O})}^2 + 2\sum_{n=0}^{N-1} \tau \norm{V(\varepsilon_\disc v^{n+1/2}_\disc)}_{L^2(\mathcal{O})}^2 =  \norm{\Pi_\disc \mathcal{K}_\disc v^\mathrm{in}}_{L^2(\mathcal{O})}^2.
\end{align}
Moreover, the stability assumption on the $L^2$-projection can be dropped in this case. 
\item Velocity differences always satisfy a pathwise energy equivalence, regardless of the boundary conditions; indeed, let $(v^n_\disc)_{n \in \mathbb{N}_0}$  and $(w^n_\disc)_{n \in \mathbb{N}_0}$ be velocity fields generated by~\eqref{eq:disc-Helmholtz} and~\eqref{eq:time-stepping} when started in $v^\mathrm{in}$ and $w^\mathrm{in}$, respectively. Then, $\mathbb{P}$-a.s.~for all $N \in \mathbb{N}$,
\begin{align*} 
    \norm{\Pi_\disc v^N -\Pi_\disc w^N }_{L^2(\mathcal{O})}^2 &+ \sum_{n = 0}^{N-1} \tau \norm{ V(\varepsilon_\disc v^{n+1/2} + \varepsilon g) - V(\varepsilon_\disc w^{n+1/2} + \varepsilon g)}_{L^2(\mathcal{O})}^2 \\
    &\eqsim  \norm{\Pi_\disc \mathcal{K}_\disc v^\mathrm{in} -\Pi_\disc \mathcal{K}_\disc w^\mathrm{in} }_{L^2(\mathcal{O})}^2.
\end{align*}
\end{itemize}
\end{remark}

\section{Long-term dynamics} \label{sec:long-term}
The long-term dynamics of our algorithm are encoded in the invariant measure of the algorithm's discrete transition semigroup, which we define in this section. Building on two time scales of the algorithm, we propose and investigate two sequences of measures that are promising candidates for the construction of the invariant measure. However, each sequence lacks one important feature: either the existence of a limit measure, or the invariance with respect to the semigroup. We derive an abstract condition on the distance of consecutive velocities that enables us to merge the desired properties of both sequences, recovering the existence of an invariant measure. We close the section by discussing consequences of homogeneous boundary conditions.

\subsection{Transition semigroups}
In order to trace the dependence of our algorithm on the initial condition, we introduce the following operators.
\begin{definition}[Solution operators] 
Let~$v^\mathrm{in} \in \mathbb{L}^2(\mathcal{O})$. Moreover, let $(v_\disc^n)_{n\in \mathbb{N}_0}$ be the velocity generated by~\eqref{eq:disc-Helmholtz} and~\eqref{eq:time-stepping} when initialised in~$v^\mathrm{in}$. 
\begin{itemize}
    \item (integer operator) Let~$\mathfrak{S}:\mathbb{N}_0 \times \mathbb{L}^2(\mathcal{O}) \to E_{\disc,0}$ be defined by 
    \begin{align*}
        \mathfrak{S}(n,v^\mathrm{in}) := v_\disc^n;
    \end{align*}
    \item (shifted integer operator) Let~$\mathfrak{S}^{1/2}:\mathbb{N}_0 \times \mathbb{L}^2(\mathcal{O}) \to E_{\disc,0}$ be defined by 
    \begin{align*}
        \mathfrak{S}^{1/2}(n,v^\mathrm{in}) := \frac{\mathfrak{S}(n+1,v^\mathrm{in}) + \mathfrak{S}(n,v^\mathrm{in})}{2}.
    \end{align*}
\end{itemize}
\end{definition}
These operators map a discrete time index~$n \in \mathbb{N}_0$ (corresponding to the continuous time~$t_n=n\tau$) and an analytic initial velocity~$v^\mathrm{in}$ to the approximate velocity at the $n$-th index and the arithmetic mean of consecutive approximate velocities, respectively.

With the help of the integer operator, we define the discrete transition semigroup that propagates over discrete time the distribution of reconstructed velocity.  
\begin{definition}[Discrete transition semigroup] \label{def:discrete-trans-group}
Let $\mathcal{P}_{\tau,\disc}$ be defined by 
\begin{align*} 
    (\mathcal{P}^n_{\tau,\disc} f)(v^\mathrm{in}) := \mathbb{E} \left[ f\big(\Pi_\disc \mathfrak{S}(n,v^\mathrm{in}) \big) \right], \quad n \in \mathbb{N}_0,~f \in C_b\big( \mathbb{L}^2(\mathcal{O}) \big),~v^\mathrm{in} \in \mathbb{L}^2(\mathcal{O}).
\end{align*}
It is called the \emph{'discrete transition semigroup'} (or simply \emph{'discrete semigroup'}).
We write~$\mathcal{P}_{\tau,\disc}$ for both the semigroup and the semigroup evaluated at $1$.
\end{definition}
In the above definition, we have already indicated (by denoting the dependence on the time index as a power instead of an argument) that $\mathcal{P}_{\tau,\disc}$ is indeed a semigroup. The next lemma justifies this.
\begin{lemma} \label{lem:semigroup}
The operator~$\mathcal{P}_{\tau,\disc}$ is a semigroup, i.e.,
\begin{itemize}
    \item for all $n \in \mathbb{N}$, $f \in C_b\big(\mathbb{L}^2(\mathcal{O})\big)$ and $v^\mathrm{in} \in \mathbb{L}^2(\mathcal{O})$: 
    \begin{align*}
        (\mathcal{P}^n_{\tau,\disc} f)(v^\mathrm{in}) = \big(\mathcal{P}^{n-1}_{\tau,\disc} (\mathcal{P}_{\tau,\disc}f)\big)(v^\mathrm{in});
    \end{align*}
    \item for all $f \in C_b\big(\mathbb{L}^2(\mathcal{O})\big)$ and $v^\mathrm{in} \in \Pi_\disc E_{\disc, 0}$: 
    \begin{align*}
         (\mathcal{P}^0_{\tau,\disc} f)(v^\mathrm{in}) =  f(v^\mathrm{in}).
    \end{align*}
\end{itemize}
\end{lemma}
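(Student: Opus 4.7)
The plan is to verify the two bullet points separately, with the first (base case) following from an idempotence property of the Helmholtz decomposition on $\Pi_\disc E_{\disc,0}$, and the second (step property) from a standard Markov-type argument based on the fact that each time step depends only on the current state and the corresponding Wiener increment.

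\textbf{Base case.} Let $v^\mathrm{in} \in \Pi_\disc E_{\disc,0}$, so that $v^\mathrm{in} = \Pi_\disc w$ for some (unique, by injectivity of $\Pi_\disc$ on $E_{\disc,0}$) $w \in E_{\disc,0}$. I want to show $\mathfrak{S}(0,v^\mathrm{in}) = w$, i.e.\ that the pair $(v^0_\disc,\pi^0_\disc) = (w,0)$ solves~\eqref{eq:disc-Helmholtz}. Since $w \in E_{\disc,0}$, \eqref{eq:disc-Helmholtz-02} holds. For \eqref{eq:disc-Helmholtz-01}, the pressure term vanishes, and the velocity terms cancel because $\Pi_\disc w = v^\mathrm{in}$. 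By the well-posedness of~\eqref{eq:disc-Helmholtz} (which follows from inf-sup stability and the injectivity of $\Pi_\disc$ on $E_{\disc,0}$), this solution is unique, so $\Pi_\disc\mathfrak{S}(0,v^\mathrm{in}) = \Pi_\disc w = v^\mathrm{in}$, yielding $(\mathcal{P}^0_{\tau,\disc}f)(v^\mathrm{in}) = f(v^\mathrm{in})$.

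\textbf{Step property.} The first observation is that the scheme~\eqref{eq:time-stepping} defines $v^{n+1}_\disc$ as a measurable function of $v^n_\disc$ and $\Delta_{n+1} W$ alone. Indeed, introducing the pressure increment $\tilde{\pi}^{n} := \pi^{n+1}_\disc - \pi^n_\disc$, the equations \eqref{eq:Evolution-01}--\eqref{eq:Evolution-02} decouple from $\pi^n_\disc$: the pair $(v^{n+1}_\disc,\tilde{\pi}^n)$ solves a one-step non-linear saddle-point problem whose data is $(v^n_\disc,\Delta_{n+1}W)$. By well-posedness (monotone operator theory plus inf-sup stability, as noted in the paper), there is a deterministic map $\Phi_\tau:E_{\disc,0}\times\mathbb{R}\to E_{\disc,0}$ such that $v^{n+1}_\disc=\Phi_\tau(v^n_\disc,\Delta_{n+1}W)$; in particular, $v^{n+1}_\disc$ is $\mathcal{F}_{t_{n+1}}$-measurable whenever $v^n_\disc$ is $\mathcal{F}_{t_n}$-measurable.

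Now I condition on $\mathcal{F}_{t_{n-1}}$: since $v^{n-1}_\disc$ is $\mathcal{F}_{t_{n-1}}$-measurable and $\Delta_n W$ is independent of $\mathcal{F}_{t_{n-1}}$ with the same law as $\Delta_1 W$,
\begin{align*}
\mathbb{E}\bigl[f(\Pi_\disc \mathfrak{S}(n,v^\mathrm{in}))\,\big|\,\mathcal{F}_{t_{n-1}}\bigr]
&= \mathbb{E}\bigl[f(\Pi_\disc \Phi_\tau(w,\Delta_n W))\bigr]\Big|_{w=v^{n-1}_\disc}\\
&= \mathbb{E}\bigl[f(\Pi_\disc \Phi_\tau(w,\Delta_1 W))\bigr]\Big|_{w=v^{n-1}_\disc}.
\end{align*}
Since $v^{n-1}_\disc\in E_{\disc,0}$, the base case applied with $\Pi_\disc w$ in place of $v^\mathrm{in}$ yields $\mathfrak{S}(0,\Pi_\disc w)=w$, and hence $\mathfrak{S}(1,\Pi_\disc w)=\Phi_\tau(w,\Delta_1 W)$. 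Thus the right-hand side equals $(\mathcal{P}_{\tau,\disc}f)(\Pi_\disc v^{n-1}_\disc)$. Taking expectations and iterating the same reasoning (or simply recognising the right-hand side as the defining expression for $\mathcal{P}^{n-1}_{\tau,\disc}(\mathcal{P}_{\tau,\disc}f)$ evaluated at $v^\mathrm{in}$) yields the claimed semigroup identity.

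\textbf{Expected obstacle.} The genuinely subtle point is the restart identity used above, namely that when~\eqref{eq:disc-Helmholtz} is re-initialised from $\Pi_\disc w$ with $w \in E_{\disc,0}$, it returns $w$ exactly with vanishing pressure. This is what makes the composition of solution maps collapse to a single $\Phi_\tau$; without injectivity of $\Pi_\disc$ on $E_{\disc,0}$, a reconstruction step could lose information and destroy the semigroup property. The remaining ingredients --- well-posedness of the saddle-point step, measurability of $\Phi_\tau$, and independence/stationarity of Wiener increments --- are routine.
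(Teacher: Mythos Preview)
Your proof is correct and follows essentially the same route as the paper: the base case via the Helmholtz decomposition returning $(w,0)$ on $\Pi_\disc E_{\disc,0}$, and the step property via the one-step map (the paper's $\mathscr{S}_{\mathrm{CN}}$, your $\Phi_\tau$) together with the i.i.d.\ structure of Wiener increments and the restart identity $\mathcal{K}_\disc\Pi_\disc w=w$ for $w\in E_{\disc,0}$. The only cosmetic difference is that the paper writes the Markov argument with explicit push-forward integrals and invokes a continuity lemma for $\mathscr{S}_{\mathrm{CN}}$ to justify measurability, while you use the equivalent conditional-expectation formulation.
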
  

\begin{remark}
For initial velocity that doesn't belong to $\Pi_\disc E_{\disc, 0}$, the semigroup evaluated at~$0$ doesn't act as an identity. Instead, one obtains
\begin{align*}
     (\mathcal{P}^0_{\tau,\disc} f)(v^\mathrm{in}) = f(\Pi_\disc \mathcal{K}_\disc v^\mathrm{in}),
\end{align*}
where $\mathcal{K}_\disc$ is the $L^2$-projection onto~$E_{\disc,0}$.
\end{remark}

At this point, we have defined the discrete transition semigroup. The next natural questions are: 
\begin{itemize}
    \item For fixed time step size and GD, does there exist an invariant measure~$\mu_{\tau,\disc}$ with respect to~$\mathcal{P}_{\tau,\disc}$?
    \item If yes, can we construct it? 
\end{itemize}
Before we discuss first steps towards the construction of invariant measures, let us recall what it means to be invariant.
\begin{definition}[Invariant measure]
Let $\mathcal{P}$ be a discrete semigroup. A measure $\mu$ on $\mathbb{L}^2(\mathcal{O})$ is called \emph{'invariant with respect to $\mathcal{P}$'} if for all $n \in \mathbb{N}$ and $f \in C_b\big( \mathbb{L}^2(\mathcal{O})\big)$:
\begin{align*}
  \int_{\mathbb{L}^2(\mathcal{O})} (\mathcal{P}^n f)(v) \mu ( \mathrm{d} v) = \int_{\mathbb{L}^2(\mathcal{O})} f(v) \mu( \mathrm{d} v).
\end{align*}
\end{definition}
If the invariant measure associated to the discrete semigroup exists, then it is unique and hence even ergodic. 
\begin{theorem}[Uniqueness] \label{thm:unique}
There exists at most one invariant measure with respect to $\mathcal{P}_{\tau,\disc}$. 
\end{theorem}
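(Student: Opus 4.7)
The plan is to leverage the pathwise energy identity for differences stated in the second bullet of the remark after Theorem~\ref{thm:apriori-vel} via a same-noise coupling. Suppose $\mu_1$ and $\mu_2$ are two invariant measures. I would pick $\mathcal{F}_0$-measurable initial states $v^{\mathrm{in}}$ and $w^{\mathrm{in}}$ distributed as $\mu_1$ and $\mu_2$ respectively, independent of each other and of $W$, and define the coupled trajectories $v^n_\disc := \mathfrak{S}(n, v^{\mathrm{in}})$ and $w^n_\disc := \mathfrak{S}(n, w^{\mathrm{in}})$ driven by the \emph{same} Wiener process. By invariance, the law of $\Pi_\disc v^n_\disc$ is $\mu_1$ and that of $\Pi_\disc w^n_\disc$ is $\mu_2$ for every $n$; since bounded Lipschitz test functions separate measures on the Polish space $\mathbb{L}^2(\mathcal{O})$, it suffices to prove $\Pi_\disc v^n_\disc - \Pi_\disc w^n_\disc \to 0$ in probability as $n \to \infty$.

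Setting $d^n := v^n_\disc - w^n_\disc$, I would subtract the two discrete equations and test the result against $d^{n+1/2} := (d^{n+1} + d^n)/2$. The vanishing of $B^\sigma_\disc$ on its diagonal makes the stochastic integral cancel, the common $g$-driven noise disappears in the difference, and all iterates lie in $E_{\disc,0}$ so the pressure terms drop; this produces the pathwise identity
\begin{align*}
\norm{\Pi_\disc d^N}^2 + \sum_{n=0}^{N-1}\tau \norm{V(\varepsilon_\disc v^{n+1/2}_\disc + \varepsilon g) - V(\varepsilon_\disc w^{n+1/2}_\disc + \varepsilon g)}^2 \eqsim \norm{\Pi_\disc \mathcal{K}_\disc (v^{\mathrm{in}} - w^{\mathrm{in}})}^2
\end{align*}
$\mathbb{P}$-almost surely. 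Two pathwise consequences follow: (i) $n \mapsto \norm{\Pi_\disc d^n}^2$ is non-increasing and hence has an a.s.~limit $L(\omega) \in [0, \infty)$, and (ii) the infinite sum $\sum_n \tau \norm{V(\cdot) - V(\cdot)}^2$ converges a.s., so its summand tends to zero.

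Next I would turn (ii) into convergence of $\Pi_\disc d^{n+1/2}$. Within the finite-dimensional space $X_{\disc,0}$, the GD axiom that $\norm{\varepsilon_\disc \cdot}_{L^p}$ is a norm gives equivalence with $\norm{\Pi_\disc \cdot}_{L^2}$, and the coercivity of $V$ (cf.~Lemma~\ref{lem:relation-of-tensors}) transfers smallness of the $V$-differences into smallness of $\Pi_\disc d^{n+1/2}$---directly and pathwise for $p \geq 2$, and in probability for $p \in (1, 2)$ after localising on a bounded set of high probability using the tightness of $\mu_1, \mu_2$. To upgrade the half-step convergence to a full-step one, I would read $\Pi_\disc(d^{n+1} - d^n)$ off the time-stepping equation: it is a dissipative term of order $\tau$ whose coefficient $S(\cdot) - S(\cdot)$ goes to zero, plus the noise $B^\sigma_\disc(d^{n+1/2},\cdot)\Delta_{n+1} W$ whose coefficient also vanishes while $\Delta_{n+1} W$ is of order $\sqrt{\tau}$ in probability. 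Hence $\Pi_\disc(d^{n+1} - d^n) \to 0$ in probability, and combined with $\Pi_\disc(d^{n+1} + d^n) = 2\Pi_\disc d^{n+1/2} \to 0$ this forces $\Pi_\disc d^n \to 0$ in probability (and so $L = 0$ a.s.).

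Finally, for any bounded Lipschitz $f$ on $\mathbb{L}^2(\mathcal{O})$ invariance of the marginals and the dominated convergence theorem deliver
\begin{align*}
\int_{\mathbb{L}^2(\mathcal{O})} f\,d\mu_1 - \int_{\mathbb{L}^2(\mathcal{O})} f\,d\mu_2 = \mathbb{E}\bigl[f(\Pi_\disc v^n_\disc) - f(\Pi_\disc w^n_\disc)\bigr] \to 0,
\end{align*}
so $\mu_1 = \mu_2$. I expect the hardest step to be the passage from summability of $V$-differences to $\Pi_\disc d^{n+1/2} \to 0$ in the shear-thinning regime $p \in (1, 2)$: the pointwise bound $\abs{V(X) - V(Y)}^2 \gtrsim (\kappa + \abs{X}^2 + \abs{Y}^2)^{(p-2)/2}\abs{X-Y}^2$ degenerates as $\abs{X}, \abs{Y}$ grow, and the moment estimates from Theorem~\ref{thm:apriori-vel} offer no pathwise control uniform in $n$; the distributional boundedness supplied by the invariant marginals is what bridges this gap.
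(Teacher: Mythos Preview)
Your approach is correct but genuinely different from the paper's. Both start from the same pathwise energy equivalence for differences, but they diverge in how they exploit it. The paper fixes \emph{deterministic} $u,v$, writes $\int f\,d\mu-\int f\,d\overline{\mu}$ as a double integral over $\mu\otimes\overline{\mu}$ of a \emph{Ces\`aro} average $\tfrac{1}{n}\sum_\ell\mathbb{E}[f(\Pi_\disc\mathfrak{S}(\ell,u))-f(\Pi_\disc\mathfrak{S}(\ell,v))]$, and proves that this Ces\`aro sum vanishes; for $p\in(1,2)$ the factor $(\kappa^{p/2}|\mathcal{O}|+\|A\|_{L^p}^p+\|B\|_{L^p}^p)^{(2-p)/(2p)}$ is controlled via the \emph{time-averaged} a priori bound of Theorem~\ref{thm:apriori-vel}, which is exactly what makes the Ces\`aro structure indispensable there. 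You instead run a synchronous coupling from \emph{random} initial data with laws $\mu_1,\mu_2$, and aim for \emph{direct} convergence $\Pi_\disc d^n\to 0$ in probability; the degenerate weight for $p<2$ (and, symmetrically, the $S$-difference bound for $p\ge 2$ in the half-to-full step) is handled by localisation on sets where $\|\Pi_\disc v^n\|_{L^2},\|\Pi_\disc w^n\|_{L^2}$ are bounded, which is available precisely because invariance fixes the marginal laws for every~$n$.

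What each buys: your coupling argument is the standard ergodic-theoretic template, yields the slightly stronger conclusion $\Pi_\disc d^n\to 0$ in probability, and does not invoke Theorem~\ref{thm:apriori-vel}; it does however require enlarging the probability space to accommodate $\mathcal{F}_0$-measurable initial data with prescribed laws (harmless, since any invariant measure is automatically supported on $\Pi_\disc E_{\disc,0}$, so the projection step is the identity there). The paper's route stays within the deterministic-initial-data framework used throughout and recycles the long-term stability estimate already proved, at the price of working only with time averages. Both the passage from the $V$-differences to $\varepsilon_\disc d^{n+1/2}$ and the half-to-full step use the same finite-dimensional inverse estimates; your sketch of the latter (reading $\Pi_\disc(d^{n+1}-d^n)$ off the equation and absorbing via Young) matches the paper's computation.
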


\subsection{Constructing the invariant measure}
Related to the two time scales of our algorithm, we choose the following two sequences of input-velocity-dependent measures as our ansatz:
\begin{subequations}
\begin{align} \label{def:aprox-measure}
    \mu_{\tau,\disc}^{N}( v^\mathrm{in}; A) &:= \frac{1}{N} \sum_{n = 0}^{N-1} \mathbb{P}\left(\Pi_\disc \mathfrak{S}(n,v^\mathrm{in}) \in A \right), \\ \label{def:aprox-measure-shift}
     \mu_{\tau,\disc}^{1/2,N}( v^\mathrm{in}; A) &:= \frac{1}{N} \sum_{n = 0}^{N-1} \mathbb{P}\left(\Pi_\disc \mathfrak{S}^{1/2}(n,v^\mathrm{in}) \in A \right),
\end{align}
\end{subequations}
 for $N \in \mathbb{N}$, $v^\mathrm{in} \in \mathbb{L}^2(\mathcal{O})$, and $A \in \mathcal{B}\big( \mathbb{L}^2(\mathcal{O})\big)$.

 While the first sequence of measures is asymptotically (for large $N$) invariant with respect to the discrete semigroup, it is unclear if the sequence has an accumulation point (with respect to the weak convergence of measures); in contrast, the second sequence of measures has an accumulation point, which gives rise to a limit measure. However, for this limit measure it remains open if it is invariant with respect to the semigroup. 

The asymptotic invariance of the first sequence of measures with respect to the discrete semigroup is the content of the following lemma. 
\begin{lemma}[Asymptotic invariance of 1st sequence] \label{lem:rel-1st-measure-semigroup}
For all $f \in C_b\big( \mathbb{L}^2(\mathcal{O}) \big)$ there exists a constant $C_f$ such that for all $n$ and $N \in \mathbb{N}$:
\begin{align*}
 \abs{\int_{\mathbb{L}^2(\mathcal{O})} (P_{\tau,\disc}^n f)(v) \mu_{\tau,\disc}^N (v^\mathrm{in}; \mathrm{d} v)  - \int_{\mathbb{L}^2(\mathcal{O})} f(v) \mu_{\tau,\disc}^{N} (v^\mathrm{in}; \mathrm{d} v)} \leq C_f \frac{n}{N}.
\end{align*}
\end{lemma}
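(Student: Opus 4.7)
The plan is to recognise both integrals as Cesàro averages of $(\mathcal{P}_{\tau,\disc}^k f)(v^{\mathrm{in}})$ at consecutive time indices. After a Markov-property-based reindexing, the difference collapses to $2n$ boundary terms, each bounded uniformly by $\|f\|_\infty$, producing the claimed $C_f n/N$ estimate with $C_f = 2\|f\|_\infty$.

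First I would unfold \eqref{def:aprox-measure} to write
\begin{align*}
\int_{\mathbb{L}^2(\mathcal{O})} f(v) \, \mu_{\tau,\disc}^N(v^{\mathrm{in}}; \mathrm{d} v) = \frac{1}{N} \sum_{k=0}^{N-1} (\mathcal{P}_{\tau,\disc}^k f)(v^{\mathrm{in}}).
\end{align*}
For the integral of $\mathcal{P}_{\tau,\disc}^n f$ the same unfolding yields a sum over $k$ of $\mathbb{E}\big[(\mathcal{P}_{\tau,\disc}^n f)(\Pi_\disc \mathfrak{S}(k, v^{\mathrm{in}}))\big]$. The key step is to invoke the Markov property of the scheme to identify each such expectation with $(\mathcal{P}_{\tau,\disc}^{k+n} f)(v^{\mathrm{in}})$. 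This property is inherited from the recursive definition \eqref{eq:time-stepping}: the increments $\Delta_{k+j} W$ for $j \geq 1$ are independent of $\mathcal{F}_{t_k}$, and the semi-implicit step admits, by the monotone operator theory and Banach-Necas-Babuska arguments cited in Section~\ref{sec:stability}, a unique solution that depends measurably on the preceding state and the driving increment. Hence the scheme defines a time-homogeneous Markov chain on the finite-dimensional space $\Pi_\disc E_{\disc,0}$, and a routine conditioning argument delivers
\begin{align*}
\int_{\mathbb{L}^2(\mathcal{O})} (\mathcal{P}_{\tau,\disc}^n f)(v) \, \mu_{\tau,\disc}^N(v^{\mathrm{in}}; \mathrm{d} v) = \frac{1}{N} \sum_{k=n}^{N+n-1} (\mathcal{P}_{\tau,\disc}^k f)(v^{\mathrm{in}}).
\end{align*}

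Next I subtract the two expressions; the overlapping indices $k = n, \ldots, N-1$ cancel, leaving only two boundary blocks of length $n$:
\begin{align*}
\frac{1}{N}\left[\sum_{k=N}^{N+n-1} (\mathcal{P}_{\tau,\disc}^k f)(v^{\mathrm{in}}) - \sum_{k=0}^{n-1} (\mathcal{P}_{\tau,\disc}^k f)(v^{\mathrm{in}})\right].
\end{align*}
Since $\mathcal{P}_{\tau,\disc}^k$ is a contraction on $C_b(\mathbb{L}^2(\mathcal{O}))$ for the supremum norm (immediate from Definition~\ref{def:discrete-trans-group}), each of the $2n$ surviving terms is bounded in modulus by $\|f\|_\infty$. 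The triangle inequality then yields the estimate with $C_f = 2\|f\|_\infty$.

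I expect the only genuinely delicate ingredient to be the justification of the Markov identity, since \eqref{eq:time-stepping} is only implicitly defined; a measurable selection statement combined with the well-posedness machinery of Section~\ref{sec:stability} is what upgrades the scheme to a true Markov chain and legitimises the conditional-expectation manipulation. Everything else is pure telescoping together with a sup-norm bound, with no moment estimate or stability control entering. This is precisely what makes the lemma an elementary $O(n/N)$ comparison rather than a quantitative mixing rate -- the price paid being that the bound degenerates as $n \to N$, which is exactly why the \emph{existence} (as opposed to invariance) of an accumulation point of $\mu_{\tau,\disc}^N(v^{\mathrm{in}};\cdot)$ must be extracted through the second, regularity-improved sequence $\mu_{\tau,\disc}^{1/2,N}$.
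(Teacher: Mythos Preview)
Your proposal is correct and follows essentially the same route as the paper's proof: both rewrite the two integrals as Ces\`aro sums $\frac{1}{N}\sum (\mathcal{P}_{\tau,\disc}^k f)(v^{\mathrm{in}})$ over shifted index ranges via the Chapman--Kolmogorov/Markov identity (which the paper imports from the proof of Lemma~\ref{lem:semigroup}), cancel the overlap, and bound the $2n$ boundary terms by $\sup|f|$, obtaining $C_f = 2\|f\|_\infty$. The only cosmetic difference is that the paper works with the push-forward measures $\mathbb{P}(\Pi_\disc\mathfrak{S}(\ell,v^{\mathrm{in}})\in\cdot)$ directly rather than the semigroup notation, but the computation is identical.
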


\begin{remark}
    Thanks to Lemma~\ref{lem:rel-1st-measure-semigroup}, any accumulation point of the sequence of measures~$\big( \mu_{\tau,\disc}^{N}( v^\mathrm{in}; \cdot) \big)_{N \in \mathbb{N}}$ will be invariant with respect to~$P_{\tau,\disc}$. 

    Lemma~\ref{lem:rel-1st-measure-semigroup} also applies to joint variations in the time horizon and time step size as long as the analytic time horizon reaches infinity. Specifically, taking a sequence of time step refinements $(\tau_k)_{k\in\mathbb{N}}$ and fixing a discrete time $t=n_k\tau_k$, \eqref{lem:rel-1st-measure-semigroup} applied to $n=n_k$ shows the asymptotic invariance of the transition semigroup at time $t$ provided that $n_k/N_k=t/(N_k\tau_k)\to 0$, which means that the analytic time horizon $N_k\tau_k$ must tend to $+\infty$.
\end{remark}

 The next lemma guarantees the existence of a limit measure for the second sequences of measures. It relies on the time-global stability of the reconstructed symmetric gradient, which is only available at shifted integer times; see~\eqref{eq:apriori-vel}. 
\begin{lemma}[Existence of limit measure for 2nd sequence] \label{lem:limit-measure}
There exist a subsequence $(N_k)_{k \in \mathbb{N}}$ and a probability measure~$\mu_{\tau,\disc}^{1/2}(v^\mathrm{in};\cdot)$ on $\mathbb{L}^2(\mathcal{O})$ such that 
\begin{align*}
    \forall f \in C_b\big(\mathbb{L}^2(\mathcal{O}) \big): \quad \int_{\mathbb{L}^2(\mathcal{O})} f( v) \mu_{\tau,\disc}^{1/2,N_k}(v^\mathrm{in}; \mathrm{d} v) \overset{k \to \infty}{\rightarrow} \int_{\mathbb{L}^2(\mathcal{O})} f( v) \mu_{\tau,\disc}^{1/2}(v^\mathrm{in}; \mathrm{d} v).
\end{align*}
\end{lemma}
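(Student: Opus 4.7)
\textbf{Proof plan for Lemma~\ref{lem:limit-measure}.} The natural strategy is Prokhorov's theorem: I will show that the family $\big(\mu_{\tau,\disc}^{1/2,N}(v^\mathrm{in};\cdot)\big)_{N\in\mathbb{N}}$ is tight on $\mathbb{L}^2(\mathcal{O})$, and then extract a weakly convergent subsequence whose limit is a probability measure (so tightness must be verified on the full space, not just on balls, to rule out mass escaping).

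The first observation is that all measures in the sequence are supported on the finite-dimensional subspace $\Pi_\disc E_{\disc,0} \subseteq \mathbb{L}^2(\mathcal{O})$. Indeed, since $E_{\disc,0}$ is a linear subspace of $X_{\disc,0}$, the arithmetic mean $\mathfrak{S}^{1/2}(n,v^\mathrm{in}) = \tfrac12(\mathfrak{S}(n+1,v^\mathrm{in}) + \mathfrak{S}(n,v^\mathrm{in}))$ lies in $E_{\disc,0}$, and its reconstruction via $\Pi_\disc$ lies in the finite-dimensional set $\Pi_\disc E_{\disc,0}$. On this finite-dimensional subspace, closed and bounded sets are compact in the ambient $\mathbb{L}^2(\mathcal{O})$-topology. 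Consequently, it suffices to produce uniform-in-$N$ bounds on some norm of the random variable $\Pi_\disc \mathfrak{S}^{1/2}(n,v^\mathrm{in})$ averaged over $n \in \{0, \dots, N-1\}$, and a Markov-type inequality will yield tightness.

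The next step is to obtain this uniform bound via the long-term stability estimate~\eqref{eq:apriori-vel}. Applying Theorem~\ref{thm:apriori-vel} with $q=1$ gives
\begin{align*}
\mathbb{E}\left[\sum_{n=0}^{N-1}\tau\,\norm{\varepsilon_\disc v_\disc^{n+1/2}}_{L^p(\mathcal{O})}^p\right] \leq C\big(\norm{v^\mathrm{in}}_{L^2(\mathcal{O})}^2 + \tau N\big),
\end{align*}
and the coercivity bound $\norm{\Pi_\disc v}_{L^p(\mathcal{O})} \leq C_\disc(p)\norm{\varepsilon_\disc v}_{L^p(\mathcal{O})} \leq \Gamma \norm{\varepsilon_\disc v}_{L^p(\mathcal{O})}$ then yields
\begin{align*}
\int_{\mathbb{L}^2(\mathcal{O})}\norm{v}_{L^p(\mathcal{O})}^p\,\mu_{\tau,\disc}^{1/2,N}(v^\mathrm{in};\dd v) = \frac{1}{N}\sum_{n=0}^{N-1}\mathbb{E}\left[\norm{\Pi_\disc v_\disc^{n+1/2}}_{L^p(\mathcal{O})}^p\right] \leq \frac{\Gamma^p C\big(\norm{v^\mathrm{in}}_{L^2(\mathcal{O})}^2 + \tau N\big)}{\tau N},
\end{align*}
which is bounded uniformly in $N$ (with $\tau$ and $\disc$ fixed). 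Observe that the analogous estimate using $\norm{\Pi_\disc v_\disc^n}_{L^2}^2$ directly would fail: \eqref{eq:apriori-vel} only provides an $O(\tau N)$ bound on that quantity, so passing through the symmetric gradient at shifted integer times is essential here -- precisely as advertised in the statement.

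Combining the two ingredients, for every $R>0$ let $K_R := \{v \in \Pi_\disc E_{\disc,0} : \norm{v}_{L^p(\mathcal{O})} \leq R\}$, which is compact in $\mathbb{L}^2(\mathcal{O})$ by finite-dimensionality. Markov's inequality gives $\mu_{\tau,\disc}^{1/2,N}(v^\mathrm{in}; \mathbb{L}^2(\mathcal{O})\setminus K_R) \leq R^{-p} \cdot \text{(uniform bound)}$, which can be made arbitrarily small independently of $N$, proving tightness. Prokhorov's theorem then provides a subsequence $(N_k)$ and a probability measure $\mu_{\tau,\disc}^{1/2}(v^\mathrm{in};\cdot)$ on $\mathbb{L}^2(\mathcal{O})$ such that $\mu_{\tau,\disc}^{1/2,N_k}(v^\mathrm{in};\cdot)$ converges weakly to it, which is the claim. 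I do not foresee a serious obstacle here -- the main subtlety is simply identifying the correct uniformly-bounded quantity, which must be the shifted-time symmetric gradient rather than the integer-time $L^2$ norm.
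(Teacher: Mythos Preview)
Your proposal is correct and follows essentially the same approach as the paper: Prokhorov via tightness on the finite-dimensional range $\Pi_\disc E_{\disc,0}$, with the uniform moment bound coming from the shifted-time symmetric-gradient estimate in Theorem~\ref{thm:apriori-vel} (with $q=1$) and a Markov/Chebyshev inequality. The only cosmetic difference is that the paper defines its compact sets directly through $\norm{\varepsilon_\disc v_\disc}_{L^p(\mathcal{O})}\le R$, whereas you first pass through the coercivity constant to work with $\norm{\Pi_\disc v}_{L^p(\mathcal{O})}$; both choices yield the same tightness argument.
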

\begin{remark}
Lemma~\ref{lem:limit-measure} remains true for joint variations in the time horizon and time step size as long as the analytic time horizon is uniformly non-degenerate, i.e., $\tau_k N_k \gtrsim 1$ for all $k\in\mathbb{N}$. Moreover, if the considered GD is compact (see \cite[Definition 2.8]{Droniou2018}), then Lemma~\ref{lem:limit-measure} provides the existence of a limit measure for the following cases: 
\begin{itemize}
    \item fully discrete case ($\tau$ and $\disc$ fixed, $N\to \infty$); 
    \item time discrete case ($\tau$ fixed, $\disc \rightarrow 0$ and $N\to \infty$);
    \item space discrete case ($\disc$ fixed, $\tau \to 0$ and $N\to \infty$ such that $\tau N \gtrsim 1$);
    \item continuous case ($\disc \rightarrow 0$, $\tau \to 0$ and $N\to \infty$ such that $\tau N \gtrsim 1$).
\end{itemize}
Above, $\disc\to 0$ means that we have sent the spatial discretisations parameters to the limit and that the considered model is therefore the continuous-in-space model.
\end{remark}
One might hope that the discrete limit measure constructed above is invariant with respect to the discrete semigroup. However, a verification of its invariance is non-trivial, since the limit measure and the semigroup depend on the shifted integer solution operator and integer solution operator, respectively. At the moment, we are unable to close this gap in full satisfaction. But we obtain a partial answer: a sufficient condition that guarantees the invariance of the limit measure is a consequence of the next lemma. 
\begin{lemma}[Mismatch of invariance for 2nd sequence] \label{lem:mismatch-invariance}
For all Lipschitz-continuous $f \in C_b\big(\mathbb{L}^2(\mathcal{O}) \big)$ there exists a constant $C_{f}$ such that for all $n$ and $N \in \mathbb{N}$:
    \begin{align*}
         &\abs{\int_{\mathbb{L}^2(\mathcal{O})} \mathcal{P}_{\tau,\disc}^n f( v) \mu_{\tau,\disc}^{1/2,N}(v^\mathrm{in}; \mathrm{d} v) - \int_{\mathbb{L}^2(\mathcal{O})} f( v) \mu_{\tau,\disc}^{1/2,N}(v^\mathrm{in}; \mathrm{d} v)} \\
         &\hspace{6em} \leq C_{f} \left( \frac{n}{N} + \sqrt{\mathbb{E}\left[ \frac{1}{N} \sum_{\ell=0}^{N-1} \norm{\Pi_\disc v^{\ell+1/2}_\disc - \Pi_\disc v^\ell_\disc }_{L^2(\mathcal{O})}^2  \right]} \right).
    \end{align*}
\end{lemma}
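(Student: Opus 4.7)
The strategy is to reduce everything to the integer-time sequence of measures~$\mu_{\tau,\disc}^N$, where Lemma~\ref{lem:rel-1st-measure-semigroup} applies, and absorb the mismatch between $v^{\ell+1/2}_\disc$ and $v^\ell_\disc$ into the second term of the announced bound. Writing out the definition of $\mu_{\tau,\disc}^{1/2,N}$ and decomposing $\Pi_\disc v^{\ell+1/2}_\disc = \Pi_\disc v^\ell_\disc + (\Pi_\disc v^{\ell+1/2}_\disc - \Pi_\disc v^\ell_\disc)$ inside each expectation, I would split
\begin{align*}
\int \mathcal{P}^n_{\tau,\disc} f \dd \mu_{\tau,\disc}^{1/2,N} - \int f \dd \mu_{\tau,\disc}^{1/2,N}
= \Bigl( \int \mathcal{P}^n_{\tau,\disc} f \dd \mu_{\tau,\disc}^{N} - \int f \dd \mu_{\tau,\disc}^{N} \Bigr) + R_1 - R_2,
\end{align*}
with
\begin{align*}
R_1 &:= \frac{1}{N}\sum_{\ell=0}^{N-1} \mathbb{E}\bigl[ (\mathcal{P}^n_{\tau,\disc} f)(\Pi_\disc v^{\ell+1/2}_\disc) - (\mathcal{P}^n_{\tau,\disc} f)(\Pi_\disc v^{\ell}_\disc)\bigr], \\
R_2 &:= \frac{1}{N}\sum_{\ell=0}^{N-1} \mathbb{E}\bigl[ f(\Pi_\disc v^{\ell+1/2}_\disc) - f(\Pi_\disc v^{\ell}_\disc) \bigr].
\end{align*}
The bracketed difference is exactly the quantity controlled by Lemma~\ref{lem:rel-1st-measure-semigroup}, so it contributes $C_f \, n/N$.

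The two remainders $R_1$ and $R_2$ should be estimated through Lipschitz continuity. For $R_2$, the Lipschitz assumption on $f$ directly gives $\abs{R_2} \leq L_f \cdot N^{-1}\sum_{\ell} \mathbb{E}\lVert \Pi_\disc v^{\ell+1/2}_\disc - \Pi_\disc v^\ell_\disc\rVert_{L^2}$. For $R_1$, I need that $\mathcal{P}^n_{\tau,\disc} f$ is itself Lipschitz, with a constant that is bounded uniformly in $n$. This is the key point and is supplied by the second remark following Theorem~\ref{thm:apriori-vel}: the pathwise energy equivalence for velocity differences ensures that, for any two analytic initial data $v^{\mathrm{in}}$ and $w^{\mathrm{in}}$, the associated trajectories satisfy $\lVert \Pi_\disc \mathfrak{S}(n,v^{\mathrm{in}}) - \Pi_\disc \mathfrak{S}(n,w^{\mathrm{in}})\rVert_{L^2} \lesssim \lVert \Pi_\disc \mathcal{K}_\disc (v^{\mathrm{in}} - w^{\mathrm{in}})\rVert_{L^2} \leq \lVert v^{\mathrm{in}} - w^{\mathrm{in}}\rVert_{L^2}$, where the implicit constant is independent of $n$. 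Applying this inside the expectation defining $\mathcal{P}^n_{\tau,\disc} f$ yields a Lipschitz constant of $\mathcal{P}^n_{\tau,\disc} f$ bounded by a multiple of $L_f$, uniformly in $n$, so that $\abs{R_1} \lesssim L_f \cdot N^{-1}\sum_{\ell} \mathbb{E}\lVert \Pi_\disc v^{\ell+1/2}_\disc - \Pi_\disc v^\ell_\disc\rVert_{L^2}$.

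It remains to turn these $L^1$-type averages into the $L^2$-type quantity appearing in the statement. By the Cauchy--Schwarz (or Jensen) inequality applied to the uniform probability measure on $\{0,\ldots,N-1\} \times \Omega$,
\begin{align*}
\frac{1}{N}\sum_{\ell=0}^{N-1} \mathbb{E}\bigl\lVert \Pi_\disc v^{\ell+1/2}_\disc - \Pi_\disc v^\ell_\disc\bigr\rVert_{L^2(\mathcal{O})}
\leq \sqrt{ \mathbb{E}\biggl[ \frac{1}{N}\sum_{\ell=0}^{N-1} \bigl\lVert \Pi_\disc v^{\ell+1/2}_\disc - \Pi_\disc v^\ell_\disc\bigr\rVert_{L^2(\mathcal{O})}^2\biggr]},
\end{align*}
which delivers the second term in the announced bound. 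Combining the three estimates yields the claim with $C_f$ depending only on the Lipschitz and sup-norm constants of $f$ and on the $\Gamma$-stability of the GD.

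The main obstacle is really the uniform-in-$n$ Lipschitz continuity of the discrete semigroup acting on Lipschitz observables; once this is obtained from the pathwise energy equivalence for velocity differences, the rest of the argument is a straightforward triangle-inequality decomposition combined with Lemma~\ref{lem:rel-1st-measure-semigroup} and Jensen's inequality.
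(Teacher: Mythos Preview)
Your proof is correct and follows essentially the same route as the paper: the same decomposition into the integer-time term (handled by Lemma~\ref{lem:rel-1st-measure-semigroup}) plus a remainder measuring the mismatch between $\mu^{1/2,N}_{\tau,\disc}$ and $\mu^N_{\tau,\disc}$, the same reduction of that remainder to the Lipschitz continuity of $\mathcal{P}^n_{\tau,\disc} f$ uniformly in $n$, and the same Cauchy--Schwarz step to reach the $L^2$-type average. The only cosmetic difference is that the paper derives the uniform Lipschitz bound $\seminorm{\mathcal{P}^n_{\tau,\disc} f}_{\mathrm{Lip}}\le \seminorm{f}_{\mathrm{Lip}}$ by iterating the unit Lipschitz constant of the one-step solution map $\mathscr{S}_{\mathrm{CN}}[\cdot,\Delta W]$, whereas you invoke the pathwise energy equivalence for velocity differences from the remark after Theorem~\ref{thm:apriori-vel}; these are the same mechanism and yield the same conclusion up to an absolute multiplicative constant.
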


\begin{remark}
    Notice that differences of integer velocity and shifted integer velocity are proportional to differences of integer velocities, since~$v^{\ell+1}_\disc = 2 v^{\ell + 1/2}_\disc - v^{\ell}_\disc$ for all $\ell \in \mathbb{N}_0$. Therefore,
    \begin{align} \label{ass:accum-dif}
     \lim_{N\to \infty} \mathbb{E}\left[ \frac{1}{N} \sum_{\ell=0}^{N-1} \norm{\Pi_\disc v^{\ell+1}_\disc - \Pi_\disc v^\ell_\disc }_{L^2(\mathcal{O})}^2  \right] =0
    \end{align}
    is a sufficient condition for the invariance of the limit measure.
\end{remark}
 In most situations, we don't expect~\eqref{ass:accum-dif} to be satisfied; instead, motivated by our numerical experiments (see Figure~\ref{fig:incEvo} and Table~\ref{tab:eoc}), we expect that the mean time-averaged difference of velocity converges to a time step size-dependent constant:
\begin{align} \label{eq:lim-difference-tau}
    \lim_{N\to \infty} \mathbb{E}\left[ \frac{1}{N} \sum_{\ell=0}^{N-1} \norm{\Pi_\disc v^{\ell+1}_\disc - \Pi_\disc v^\ell_\disc }_{L^2(\mathcal{O})}^2  \right] = \mathfrak{C}_\disc(\tau).
\end{align}
While~\eqref{eq:lim-difference-tau} is insufficient for the verification of the $\mathcal{P}_{\tau,\disc}$-invariance of the limit measure of the second sequence of measures, it will eventually enable the construction of time-continuous velocity and a corresponding time-continuous semigroup. In the time-continuous framework, the two time scales of our algorithm will be merged, so that both -- existence of the limit measure and its invariance -- will hold simultaneously. The analysis of the limit as the time step size vanishes is an open question for future work.

The next example is a special case, in which invariance and existence hold simultaneously.  
\begin{example}[Trivial boundary conditions] \label{ex:trivial-BC}
 Let $g = 0$. Then the following statements are true:
\begin{enumerate}
    \item (characterising the invariant measure) the $\mathcal{P}_{\tau,\disc}$-invariant measure is the Dirac-measure~$\delta_0$ supported at zero velocity; 
    \item (convergence of velocity at integer times) $\lim_{n\to \infty}  \Pi_\disc v^{n}_\disc = 0$;
    \item (convergence of velocity at shifted integer times) $\lim_{n\to \infty}  \Pi_\disc v^{n+1/2}_\disc = 0$;
    \item (convergence of 1st sequence of measures) $\lim_{N\to \infty}\mu_{\tau,\disc}^{N}( v^\mathrm{in}; \cdot) = \delta_{0}$;
    \item (convergence of 2nd sequence of measures) $\lim_{N\to \infty}\mu_{\tau,\disc}^{1/2,N}( v^\mathrm{in}; \cdot) = \delta_{0}$.
\end{enumerate}
\end{example}

\begin{figure}
    \centering
    \includegraphics[width=0.7\linewidth]{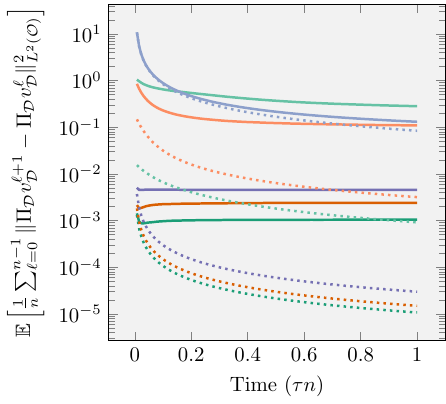}
    \caption{Time evolution of mean (based on 100 trajectories) time-averaged difference of velocity for EXP-1: $p=1.5$~({\protect\tikz \protect\draw[color=intro_color1, line width=2] (0,0) -- (0.5,0);}), $p=2$~({\protect\tikz \protect\draw[color=intro_color2, line width=2] (0,0) -- (0.5,0);}), and $p=3$~({\protect\tikz \protect\draw[color=intro_color3, line width=2] (0,0) -- (0.5,0);}); and EXP-2: $p=1.5$~({\protect\tikz \protect\draw[color=orange_light, line width=2] (0,0) -- (0.5,0);}), $p=2$~({\protect\tikz \protect\draw[color=green_light, line width=2] (0,0) -- (0.5,0);}), and $p=3$~({\protect\tikz \protect\draw[color=purple_light, line width=2] (0,0) -- (0.5,0);}). Thick lines and dotted lines denote the stochastic and deterministic evolutions, respectively. For further details on the numerical simulations, see Section~\ref{sec:num-sim}. }
    \label{fig:incEvo}
\end{figure}

\begin{table}
    \centering
    \begin{subtable}{0.5\textwidth}
    \centering
        \includegraphics[width=0.8\linewidth]{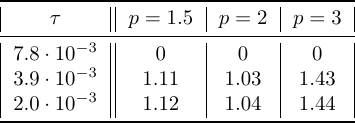}
        \caption{EOC of~$\mathfrak{C}_\disc$ for EXP-1.}
        \label{tab:eoc-var}
    \end{subtable}%
    \begin{subtable}{0.5\textwidth}
    \centering
        \includegraphics[width=0.8\linewidth]{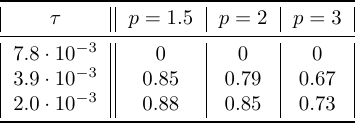}
        \caption{EOC of~$\mathfrak{C}_\disc$ for EXP-2.}
        \label{tab:eoc-lid}
    \end{subtable}%
    \caption{Experimental order of convergence (EOC) of~$\mathfrak{C}_\disc$; i.e., $ \mathfrak{C}_\disc(\tau) \eqsim \tau^{\mathrm{EOC}}$, defined in~\eqref{eq:lim-difference-tau} for EXP-1 and EXP-2, and varying viscous growth rates. For further details on the numerical simulations, see Section~\ref{sec:num-sim}.}
    \label{tab:eoc}
\end{table}

\section{Proofs} \label{sec:proofs}
In this section, we verify the claims presented in Sections~\ref{sec:disc-tools} and~\ref{sec:long-term}. We start by deriving the long-term stability of our proposed algorithm. Afterwards, we investigate the continuity of the solution operator generated by the algorithm, as well as the uniqueness of the discrete invariant measure. The continuity is derived in two steps, corresponding to the two time scales of the Crank--Nicolson algorithm. Once the continuity is established, we use it to verify the semigroup property as well as the asymptotic invariance of the first sequence of measures. Building on the long-term stability of the algorithm, we show that the second sequence of measures has an accumulation point. Lastly, we verify the sufficiency of Condition~\eqref{ass:accum-dif} to merge the desired properties: invariance and existence, and the consequences of trivial boundary conditions. 

\subsection{Stability of algorithm}  \label{sec:stability-of-algo}
Before we present the proof of Theorem~\ref{thm:apriori-vel}, we recall some classical results on the relation between the tensors~$S$ and~$V$. Details can be found, for example, in~\cite{DieE08,MR4091593}.

\begin{lemma}[Relation of tensors] \label{lem:relation-of-tensors}
The following equivalence is true, uniformly for $A, B \in \mathbb{R}^{n\times n}$:
\begin{align}\label{eq:RelTensors}
\big( S(A) - S(B)\big): (A-B) &\eqsim \abs{V(A) - V(B)}^2.
\end{align}
\end{lemma}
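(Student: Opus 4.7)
The plan is to reduce both sides of \eqref{eq:RelTensors} to the same symmetric auxiliary quantity
\begin{align*}
\mu(A,B) := (\kappa + \abs{A}^2 + \abs{B}^2)^{(p-2)/2} \abs{A-B}^2,
\end{align*}
which is the natural ``weighted'' distance controlling $p$-structure type estimates. Concretely, I would prove the two equivalences
\begin{align*}
\bigl( S(A) - S(B)\bigr) : (A-B) \eqsim \mu(A,B) \eqsim \abs{V(A) - V(B)}^2,
\end{align*}
and the lemma follows by transitivity. Because the problem is invariant under rotations and under the exchange $A \leftrightarrow B$, one may moreover assume a convenient normalisation.

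For each of the two equivalences, I would use the fundamental theorem of calculus along the segment $\gamma(t) := B + t(A-B)$, $t\in[0,1]$. For the first equivalence, writing
\begin{align*}
\bigl( S(A) - S(B)\bigr) : (A-B) = \int_0^1 DS(\gamma(t))(A-B) : (A-B) \dd t,
\end{align*}
a direct computation of $DS$ gives
\begin{align*}
DS(C) M : M = (\kappa + \abs{C}^2)^{(p-2)/2}\abs{M}^2 + (p-2)(\kappa + \abs{C}^2)^{(p-4)/2} (C:M)^2.
\end{align*}
The eigenvalues of this quadratic form lie between $\min(1,p-1)(\kappa+\abs{C}^2)^{(p-2)/2}$ and $\max(1,p-1)(\kappa+\abs{C}^2)^{(p-2)/2}$, so after integration in $t$ the key task is to show
\begin{align*}
\int_0^1 (\kappa + \abs{\gamma(t)}^2)^{(p-2)/2} \dd t \eqsim (\kappa + \abs{A}^2 + \abs{B}^2)^{(p-2)/2},
\end{align*}
which is the well-known ``shift change'' for $p$-type weights; the cases $p \geq 2$ and $p < 2$ have to be treated separately, because the sign of $p-2$ determines whether the integrand attains its minimum or its maximum near the midpoint. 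The second equivalence follows by the same template applied to $V$ in place of $S$, together with the identity $\abs{V(A)-V(B)}^2 = \bigl| \int_0^1 DV(\gamma(t))(A-B)\dd t\bigr|^2$, for which one also needs Jensen's inequality (in one direction) and again the weight-integral equivalence above.

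The only real obstacle is the weight-integral estimate and the split into the subquadratic/superquadratic regimes: for $p\geq 2$ the weight is convex in $t$ and bounded below by a constant multiple of $(\kappa+\abs{A}^2+\abs{B}^2)^{(p-2)/2}$ on a fixed subinterval of $[0,1]$, while for $p<2$ the weight is maximised near the midpoint and requires a reverse argument. Since this is a standard computation in the $p$-Laplacian literature (and indeed carried out in detail in the references \cite{DieE08,MR4091593} cited in the paper just before the lemma), I would simply invoke it, keeping the role of $\kappa\geq 0$ explicit so that the equivalence is uniform in the shift.
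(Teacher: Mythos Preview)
Your proposal is correct and in fact supplies more detail than the paper itself, which does not prove this lemma at all but simply states it as a classical result with a reference to \cite{DieE08,MR4091593}. The line-integral argument you sketch---computing $DS$ and $DV$ along the segment and reducing everything to the weight-integral equivalence $\int_0^1 (\kappa+\abs{\gamma(t)}^2)^{(p-2)/2}\dd t \eqsim (\kappa+\abs{A}^2+\abs{B}^2)^{(p-2)/2}$---is precisely the standard approach carried out in those references, so your write-up and the paper's citation point to the same proof.
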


\begin{lemma}[Generalised Young's inequality] 
Let $\delta > 0$. Then there exists $c_\delta \geq 1$ such that for all $A,B,C \in \mathbb{R}^{n\times n}$:
\begin{align}\label{eq:GenYoung}
\big( S(A) - S(B)\big): (C-B) \leq \delta \abs{V(A) - V(B)}^2 + c_\delta \abs{V(C) - V(B)}^2.
\end{align}
\end{lemma}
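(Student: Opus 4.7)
The plan is to reduce this generalised Young inequality to a classical Young inequality adapted to the shifted $N$-function associated with the $p$-growth rheology. First, I would bound the left-hand side by the elementary estimate
\begin{align*}
\big(S(A) - S(B)\big):(C-B) \leq \abs{S(A) - S(B)}\,\abs{C-B}.
\end{align*}
The goal is then to absorb the first factor into a term equivalent to $\abs{V(A) - V(B)}^2$ and the second factor into a term equivalent to $\abs{V(C) - V(B)}^2$.

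Second, introduce the shifted $N$-function $\phi_a(t) := \int_0^t (\kappa + (a+r)^2)^{(p-2)/2} r \dd r$ and its convex conjugate $\phi_a^*$. Two equivalences drive the whole proof:
\begin{align*}
\phi_{\abs{B}}(\abs{C-B}) \eqsim \abs{V(C)-V(B)}^2, \qquad \phi_{\abs{B}}^*(\abs{S(A)-S(B)}) \eqsim \abs{V(A)-V(B)}^2,
\end{align*}
both of which are standard consequences of the definitions of $S$ and $V$ (see, e.g., \cite{DieE08,MR4091593}) and are easily verified by separating the cases $p \leq 2$ and $p \geq 2$ and comparing $(\kappa + \abs{A}^2)^{(p-2)/2}$ with the weight in $\phi_{\abs{B}}$ near $\abs{A-B}$.

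Third, apply Young's inequality for the conjugate pair $(\phi_{\abs{B}}, \phi_{\abs{B}}^*)$: for every $\delta > 0$ there exists $c_\delta \geq 1$, independent of $A,B,C$, such that
\begin{align*}
st \leq \delta \phi_{\abs{B}}^*(s) + c_\delta \phi_{\abs{B}}(t) \qquad \forall s,t \geq 0.
\end{align*}
Setting $s = \abs{S(A)-S(B)}$ and $t = \abs{C-B}$ and combining with the two equivalences above yields the claimed inequality.

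The only genuine work is in checking the two shifted Orlicz equivalences, which is the step where the piecewise nature of $(\kappa + r^2)^{(p-2)/2}$ in $p$ must be handled carefully; however, this is entirely classical and the conclusion then follows immediately from the Young inequality, so I expect the argument to be essentially a citation of \cite{DieE08} combined with the elementary product estimate.
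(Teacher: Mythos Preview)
Your proposal is correct and follows exactly the standard route the paper points to: the paper does not prove this lemma but simply recalls it as a classical result with reference to \cite{DieE08,MR4091593}, and the shifted $N$-function argument you outline is precisely the proof given there. The only point worth making explicit is that the $\Delta_2$-constants of $\phi_a$ and $\phi_a^*$ are uniform in the shift $a\ge 0$, so that $c_\delta$ in your weighted Young inequality is indeed independent of $B$; otherwise your sketch is complete.
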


\begin{proof}[Proof of Theorem~\ref{thm:apriori-vel}]
Let $n\in \mathbb{N}_0$. The test function~$\xi \in X_{\disc,0}$ in~\eqref{eq:Evolution-01} is to our disposal. Thus, we may choose $\xi = v^{n+1/2}_\disc$, which is discretely divergence free thanks to Equation~\eqref{eq:Evolution-02}. Consequently, the pressure contribution vanishes and we find the local energy identity:
\begin{align}\label{eq:square-expansion-one-step}
\begin{aligned}
\frac{1}{2}\norm{\Pi_\disc v^{n+1}_\disc}_{L^2}^2 &-\frac{1}{2}\norm{\Pi_\disc v^n_\disc}_{L^2}^2 + \tau \left(S( \varepsilon_\disc v^{n+1/2}_\disc + \varepsilon g ), \varepsilon_\disc v^{n+1/2}_\disc  \right) \\
&= \left[ B_\disc^\sigma( v^{n+1/2}_\disc, v^{n+1/2}_\disc ) + \left( (\sigma \cdot \nabla)g, \Pi_\disc v^{n+1/2}_\disc \right) \right] \Delta_{n+1} W.
\end{aligned}
\end{align}
Eventually, we will sum up the local energy identities to a prescribed time threshold~$\tau N$, $n \leq N \in \mathbb{N}$, to obtain the global energy identity. Before we do so, we analyse each local energy contribution.

\underline{Estimating the shifted diffusion~$S$.}~Using the relation of tensors and the generalised Young's inequality (see~\eqref{eq:RelTensors}--\eqref{eq:GenYoung}), we infer (for arbitrary $\delta >0$)
\begin{align} \label{eq:shifted-S-estimate}
\begin{aligned}
&\left(S( \varepsilon_\disc v^{n+1/2}_\disc + \varepsilon g ), \varepsilon_\disc v^{n+1/2}_\disc  \right)  \\
&\quad = \left(S( \varepsilon_\disc v^{n+1/2}_\disc + \varepsilon g ), \varepsilon_\disc v^{n+1/2}_\disc + \varepsilon g  \right) - \left(S( \varepsilon_\disc v^{n+1/2}_\disc + \varepsilon g ) , \varepsilon g  \right) \\
&\quad \geq (1-\delta) \norm{V(\varepsilon_\disc v^{n+1/2}_\disc + \varepsilon g)}_{L^2(\mathcal{O})}^2 - c_\delta \norm{V(\varepsilon g)}_{L^2(\mathcal{O})}^2.
\end{aligned}
\end{align}

\underline{Estimating the noise coefficient.}~Since the bi-linear form $B_\disc^\sigma$ is anti-symmetric, it vanishes if the arguments coincide. Thus, the noise coefficient reduces to
\begin{align*}
 \left( (\sigma \cdot \nabla)g , \Pi_{\disc} v^{n+1/2}_\disc \right) =\frac{1}{2} \left( (\sigma \cdot \nabla)g , \Pi_{\disc} (v^{n+1}_\disc - v^n_\disc) \right) + \left( (\sigma \cdot \nabla)g , \Pi_{\disc} v^n_\disc \right).
\end{align*}
The expectation of the second term vanishes when multiplied by the random increment~$\Delta_{n+1}W$, by independence with $\Pi_{\disc} v^n_\disc$. Next, we expand the increment $v^{n+1}_\disc - v^n_\disc$ to obtain the It\^o-Stratonovich corrector: Using the $L^2$-projection and~\eqref{eq:Evolution-01} with $\xi = \mathcal{K}_\disc [(\sigma \cdot \nabla)g] \in E_{\disc,0}$, we arrive at
\begin{align*}
&\frac{1}{2} \left( (\sigma \cdot \nabla)g , \Pi_{\disc} (v^{n+1}_\disc - v^n_\disc) \right) = \frac{1}{2} \left( \Pi_\disc \mathcal{K}_\disc [(\sigma \cdot \nabla)g] , \Pi_{\disc} (v^{n+1}_\disc - v^n_\disc) \right)\\
&\quad = -  \frac{\tau}{2} \underbrace{\left( S( \varepsilon_\disc v^{n+1/2}_\disc+ \varepsilon g ), \varepsilon_\disc \mathcal{K}_\disc [(\sigma \cdot \nabla)g] \right)}_{\mathrm{I}}  \\
&\quad \quad + \frac{\Delta_{n+1} W}{2} \left[ \underbrace{B_\disc^{\sigma}( v^{n+1/2}_\disc, \mathcal{K}_\disc [(\sigma \cdot \nabla)g] )}_{\mathrm{II}}+ \underbrace{\left( (\sigma \cdot \nabla)g , \Pi_{\disc} \mathcal{K}_\disc [(\sigma \cdot \nabla)g] \right)}_{\mathrm{III}} \right].
\end{align*}
Without loss of generality, we assume that $\abs{\Delta_{n+1} W} > 0$. If $\Delta_{n+1}W = 0$, then the right-hand-side of Equation~\eqref{eq:square-expansion-one-step} vanishes; i.e., it doesn't need to be estimated.

Estimating~$\mathrm{I}$: Writing $ \varepsilon_\disc \mathcal{K}_\disc [(\sigma \cdot \nabla)g] =\abs{\Delta_{n+1} W}^{-1}\abs{\Delta_{n+1} W} \varepsilon_\disc \mathcal{K}_\disc [(\sigma \cdot \nabla)g] $, an application of the generalised Young inequality~\eqref{eq:GenYoung} shows (for arbitrary $\delta > 0$)
\begin{align} 
\begin{aligned}\label{eq:I-final}
\mathrm{I} &\leq \frac{1}{\abs{\Delta_{n+1}W}} \delta \norm{V(\varepsilon_\disc v^{n+1/2}_\disc +\varepsilon g) }_{L^2(\mathcal{O})}^2 \\
&\qquad + \frac{1}{\abs{\Delta_{n+1}W}} c_\delta \norm{V(\abs{\Delta_{n+1} W} \varepsilon_\disc \mathcal{K}_\disc [(\sigma \cdot \nabla)g]) }_{L^2(\mathcal{O})}^2 .
\end{aligned}
\end{align}

Estimating~$\mathrm{II}$: Using~\eqref{def:discrete-noise-coefficient} and  the H\"older inequality, we obtain
\begin{align*} 
\mathrm{II} &= \frac{1}{2} \left( \sigma^T\nabla_\disc  v^{n+1/2}_\disc, \Pi_\disc \mathcal{K}_\disc [(\sigma \cdot \nabla)g] \right) - \frac{1}{2}  \left(  \Pi_\disc  v^{n+1/2}_\disc, \sigma^T\nabla_\disc \mathcal{K}_\disc [(\sigma \cdot \nabla)g]  \right) \\
&\leq \frac{\norm{\sigma}_{L^\infty(\mathcal{O})}}{2}  \norm{\nabla_\disc v^{n+1/2}_\disc}_{L^p(\mathcal{O})} \norm{\Pi_\disc \mathcal{K}_\disc[(\sigma \cdot \nabla)g]}_{L^{p'}(\mathcal{O})} \\
&\qquad + \frac{\norm{\sigma}_{L^\infty(\mathcal{O})}}{2}  \norm{\Pi_\disc  v^{n+1/2}_\disc}_{L^p(\mathcal{O})} \norm{ \nabla_\disc \mathcal{K}_\disc [(\sigma \cdot \nabla)g]}_{L^{p'}(\mathcal{O})}.
\end{align*} 
Applying~\eqref{def:discrete-poincare} and~\eqref{eq:l2-proj-sobolev}, together with the $\Gamma$-stability of the GD, and a weighted Young inequality, we derive 
\begin{align} \label{eq:II-final}
\begin{aligned}
\mathrm{II}  &\leq \norm{\sigma}_{L^\infty(\mathcal{O})} C_\disc(p) C_\disc(p') C^{\mathrm{Sob}}_\disc(p')  \norm{ (\sigma \cdot \nabla)g}_{W^{1,p'}(\mathcal{O})} \norm{\varepsilon_\disc v^{n+1/2}_\disc}_{L^p(\mathcal{O})}\\
&\leq \frac{\tau}{\abs{\Delta_{n+1} W}^2} \left( \delta  \norm{\varepsilon_\disc v^{n+1/2}_\disc}_{L^p(\mathcal{O})}^p +  c_\delta \left(\frac{\abs{\Delta_{n+1} W}^2}{\tau} \right)^{p'}  \mathrm{K}^{p'} \right),
\end{aligned}
\end{align}
where $\mathrm{K} :=  \Gamma^3 \norm{\sigma}_{L^\infty(\mathcal{O})} \norm{ (\sigma \cdot \nabla)g}_{W^{1,p'}(\mathcal{O})}$.

Estimating~$\mathrm{III}$:~By definition of the $L^2$-projection, 
\begin{align} \label{eq:III-final}
\mathrm{III} = \norm{ \Pi_{\disc} \mathcal{K}_\disc [(\sigma \cdot \nabla)g]}_{L^2(\mathcal{O})}^2 \leq \norm{ (\sigma \cdot \nabla)g }_{L^2(\mathcal{O})}^2.
\end{align}

We are ready to estimate the right-hand-side of Equation~\eqref{eq:square-expansion-one-step}. Utilising~\eqref{eq:I-final},~\eqref{eq:II-final} and~\eqref{eq:III-final}, it holds:
\begin{align}\label{eq:B-all-together}
\begin{aligned}
&\left[ B_\disc^\sigma( v^{n+1/2}_\disc, v^{n+1/2}_\disc ) + \left( (\sigma \cdot \nabla)g, \Pi_\disc v^{n+1/2}_\disc \right) \right] \Delta_{n+1} W \\
&\quad = \left[ \frac{1}{2} \left( (\sigma \cdot \nabla)g , \Pi_{\disc} (v^{n+1}_\disc - v^n_\disc) \right) + \left( (\sigma \cdot \nabla)g , \Pi_{\disc} v^n_\disc \right) \right] \Delta_{n+1} W \\
&\quad\leq  \tau \delta \left(\norm{V(\varepsilon_\disc v^{n+1/2}_\disc +\varepsilon g) }_{L^2(\mathcal{O})}^2 +  \norm{\varepsilon_\disc v^{n+1/2}_\disc}_{L^p(\mathcal{O})}^p \right) \\
&\quad\quad + \tau c_\delta  \left( \norm{V(\abs{\Delta_{n+1} W} \varepsilon_\disc \mathcal{K}_\disc [(\sigma \cdot \nabla)g]) }_{L^2(\mathcal{O})}^2 +\left(\frac{\abs{\Delta_{n+1} W}^2}{\tau} \right)^{p'} \mathrm{K}^{p'} \right) \\
&\quad\quad + \abs{\Delta_{n+1} W}^2  \norm{ (\sigma \cdot \nabla)g}_{L^2(\mathcal{O})}^2 + \left( (\sigma \cdot \nabla)g , \Pi_{\disc} v^n_\disc\right) \Delta_{n+1} W.
\end{aligned}
\end{align}
Notice that $c_\delta$ in~\eqref{eq:I-final} and~\eqref{eq:II-final} don't coincide necessarily. However, we can choose a common $c_\delta$ by taking their maximum.

\underline{Connecting~$V$ and $L^p$-norm.}
Due to the structure of $V$, there exists $\lambda > 1$ such that for all $A \in \big(\mathbb{L}^p(\mathcal{O})\big)^n$:
\begin{align*}
\lambda^{-1} \left( \norm{A}_{L^p(\mathcal{O})}^p -  \kappa^{p/2} \abs{\mathcal{O}} \right) \leq \norm{V(A)}_{L^2(\mathcal{O})}^2 \leq \lambda \left( \norm{A}_{L^p(\mathcal{O})}^p +  \kappa^{p/2} \abs{\mathcal{O}} \right).
\end{align*}
Since $\abs{a+b}^p \geq 2^{1-p} \abs{a}^p -  \abs{b}^p$, this allows us to conclude,
\begin{align} \nonumber
 \norm{V(\varepsilon_\disc v^{n+1/2}_{\disc} +\varepsilon g) }_{L^2(\mathcal{O})}^2 &\geq \lambda^{-1} \left( \norm{\varepsilon_\disc v^{n+1/2}_{\disc} + \varepsilon g}_{L^p(\mathcal{O})}^p -  \kappa^{p/2} \abs{\mathcal{O}} \right)  \\ \label{eq:V1-temp}
 &\geq \lambda^{-1} \left( 2^{1-p} \norm{\varepsilon_\disc v^{n+1/2}_{\disc}}_{L^p(\mathcal{O})}^p - \norm{\varepsilon g}_{L^p(\mathcal{O})}^p  -  \kappa^{p/2} \abs{\mathcal{O}} \right).
\end{align}
In a similar way, additionally using~\eqref{eq:l2-proj-sobolev} and the $\Gamma$-stability of the GD, we conclude that
\begin{align} \label{eq:V2-temp}
\begin{aligned}
&\norm{V(\abs{\Delta_{n+1} W} \varepsilon_\disc \mathcal{K}_\disc [(\sigma \cdot \nabla)g]) }_{L^2(\mathcal{O})}^2  \\
&\hspace{3em} \leq \lambda \left( \abs{\Delta_{n+1} W}^p \Gamma^p \norm{(\sigma \cdot \nabla)g}_{W^{1,p}(\mathcal{O})}^p +  \kappa^{p/2} \abs{\mathcal{O}} \right) 
\end{aligned}
\end{align}
and
\begin{align} \label{eq:V3-temp}
\norm{V(\varepsilon g)}_{L^2(\mathcal{O})}^2 \leq  \lambda \left( \norm{\varepsilon g}_{L^p(\mathcal{O})}^p +  \kappa^{p/2} \abs{\mathcal{O}} \right).
\end{align}

\underline{Time-local a priori estimate.}
Applying Inequalities~\eqref{eq:shifted-S-estimate} and~\eqref{eq:B-all-together} in Identity~\eqref{eq:square-expansion-one-step} and slightly reorganising yield
\begin{align*}
&\frac12\norm{\Pi_\disc v^{n+1}_\disc}_{L^2(\mathcal{O})}^2 -\frac12 \norm{\Pi_\disc v^n_\disc}_{L^2(\mathcal{O})}^2 \\
&\quad \quad +\tau (1-2\delta) \norm{V(\varepsilon_\disc v^{n+1/2}_\disc +\varepsilon g) }_{L^2(\mathcal{O})}^2 - \tau \delta  \norm{\varepsilon_\disc v^{n+1/2}_\disc}_{L^p(\mathcal{O})}^p  \\
&\leq \tau c_\delta \left( \norm{V(\abs{\Delta_{n+1} W}  \varepsilon_\disc \mathcal{K}_\disc [(\sigma \cdot \nabla)g]) }_{L^2(\mathcal{O})}^2 +  \norm{V(\varepsilon g)}_{L^2(\mathcal{O})}^2 + \left(\frac{\abs{\Delta_{n+1} W}^2}{\tau} \right)^{p'} \mathrm{K}^{p'}  \right)  \\
&\quad\quad +\abs{\Delta_{n+1} W}^2  \norm{(\sigma \cdot \nabla)g}_{L^2(\mathcal{O})}^2 + \left( (\sigma \cdot \nabla)g , \Pi_{\disc} v^n_\disc \right) \Delta_{n+1} W.
\end{align*}
Next, we use the relation of $V$ and $L^p$-norms (see Inequalities~\eqref{eq:V1-temp},~\eqref{eq:V2-temp} and~\eqref{eq:V3-temp}) to obtain
\begin{align} \label{eq:first-apriori-bound-local}
\begin{aligned}
\norm{\Pi_\disc v^{n+1}_\disc}_{L^2(\mathcal{O})}^2 &-\norm{\Pi_\disc v^n_\disc}_{L^2(\mathcal{O})}^2 +\tau (1-2\delta) \left( \frac{ 2^{1-p}}{\lambda}  -\frac{\delta}{1-2\delta}  \right) \norm{\varepsilon_\disc v^{n+1/2}_\disc}_{L^p(\mathcal{O})}^p \\
& \leq \tau  C^{\mathrm{tot}}_{\delta}(n)  + \left( (\sigma \cdot \nabla)g , \Pi_{\disc} v^n_\disc \right) \Delta_{n+1} W.
\end{aligned}
\end{align}
where
\begin{align*}
    C^{\mathrm{tot}}_{\delta}(n) &= \left( \lambda^{-1} (1-2\delta)  + 2\lambda c_\delta \right) \left( \norm{\varepsilon g}_{L^p(\mathcal{O})}^p  + \kappa^{p/2} \abs{\mathcal{O}} \right) \\
    &\quad + c_\delta \left(\frac{\abs{\Delta_{n+1} W}^2}{\tau} \right)^{p'} \left( \Gamma^3 \norm{\sigma}_{L^\infty(\mathcal{O})} \norm{ (\sigma \cdot \nabla)g}_{W^{1,p'}(\mathcal{O})} \right)^{p'} \\
    &\quad + c_\delta \lambda \tau^{p/2} \left(  \frac{\abs{\Delta_{n+1} W}^2}{\tau} \right)^{p/2} \Gamma^p \norm{(\sigma \cdot \nabla)g}_{W^{1,p}(\mathcal{O})}^p  \\
    &\quad + \frac{\abs{\Delta_{n+1} W}^2}{\tau}  \norm{ (\sigma \cdot \nabla)g}_{L^2(\mathcal{O})}^2.
\end{align*}
At this point, we derived a time-local energy inequality suitable for time-global estimates. But first, we need to fix the value of~$\delta >0$. Notice that $f: \delta \mapsto (1-2\delta) \left( \frac{ 2^{1-p}}{\lambda}  -\frac{\delta}{1-2\delta}  \right)$ is continuous in $[0,1/2)$ and satisfies: $f(0) = \frac{ 2^{1-p}}{\lambda} > 0$. Thus, there exists $\delta^* \in (0,1/2)$ such that $f(\delta^*) > 0$. We now fix $\delta = \delta^*$.

\underline{Time-global a priori estimate.} Let $M\leq N \in \mathbb{N}$ and $q\in (0,\infty)$. First, we sum up Inequality~\eqref{eq:first-apriori-bound-local} for $n \leq M-1$. Afterwards, we take the $q$-th power, apply the maximum for $M \leq N$ and expectations to find:
\begin{align} \label{eq:almost-done-apriori}
\begin{aligned}
&\mathbb{E}\left[ \max_{M \leq N} \left( \norm{\Pi_\disc v^{M}_\disc}_{L^2(\mathcal{O})}^{2} + \tau f(\delta^*) \sum_{n=0}^{M-1} \norm{\varepsilon_\disc v^{n+1/2}_\disc}_{L^p(\mathcal{O})}^p \right)^q \right] \\
&\qquad \qquad \lesssim_q  \norm{\Pi_\disc v^0}_{L^2(\mathcal{O})}^{2q}  + \mathrm{R}_2 + \mathrm{R}_3,
\end{aligned}
\end{align}
where
\begin{align*}
   \mathrm{R}_2 &:=   \mathbb{E}\left[ \left( \sum_{n=0}^{N-1}\tau C^{\mathrm{tot}}_{\delta^*}(n)  \right)^q \right], \\
   \mathrm{R}_3 &:= \mathbb{E}\left[ \left( \max_{M \leq N} \sum_{n=0}^{M-1} \left( (\sigma \cdot \nabla)g , \Pi_{\disc} v^n_\disc \right) \Delta_{n+1} W \right)^q  \right].
\end{align*}
Due to the stability of the discrete Helmholtz projection, it holds:
\begin{align*}
    \norm{\Pi_\disc v^0}_{L^2(\mathcal{O})} \leq \norm{v^\mathrm{in}}_{L^2(\mathcal{O})}.
\end{align*}
The proof is complete, once we have estimated the Terms~$\mathrm{R}_2$ and $\mathrm{R}_3$. 

Estimating~$\mathrm{R}_2$:~Notice that, for any $\alpha \in (0,\infty)$,
\begin{align} \label{eq:estimate-corrector}
\begin{aligned}
&\mathbb{E} \left[ \left( C^{\mathrm{tot}}_{\delta^*}(n) \right)^\alpha  \right] \lesssim 1,
\end{aligned}
\end{align}
since the standard normal distribution has moments of arbitrary high order. Thus,~$\mathrm{R}_2$ has the right time-scaling, provided we can interchange expectation and summation. First, let us assume that $q \in (0,1)$. Using H\"older's inequality and Inequality~\eqref{eq:estimate-corrector} with $\alpha = 1$,
\begin{align*}
\mathrm{R}_2 \leq \left( \mathbb{E}\left[ \sum_{n=0}^{N-1}\tau C^{\mathrm{tot}}_{\delta^*}(n) \right] \right)^q \lesssim  \left(\tau N \right)^q.
\end{align*} 
Next, let $q \in [1,\infty)$. Similarly, using H\"older's inequality and Inequality~\eqref{eq:estimate-corrector} with $\alpha = q$,
\begin{align*}
\mathrm{R}_2 \leq \sum_{n=0}^{N-1}\tau \mathbb{E} \left[ \big(C^{\mathrm{tot}}_{\delta^*}(n)\big)^q \right] (N \tau)^{q-1} \lesssim \left(\tau N \right)^q.
\end{align*}
This concludes the estimate of~$\mathrm{R}_2$.

Estimating~$\mathrm{R}_3$:~ Notice that
\begin{align*}
\mathbb{N}_0 \ni M \mapsto \sum_{n=0}^{M-1} \left( (\sigma \cdot \nabla)g , \Pi_{\disc} v^n_\disc \right) \Delta_{n+1} W
\end{align*}
is a centred time-discrete $(\mathcal{F}_{n\tau})_{n \in \mathbb{N}_0}$-martingale. Invoking the Burkholder-Davis-Gundy inequality, the H\"older inequality and the Young inequality with weight~$\eta>0$,
\begin{align*}
\mathrm{R}_3 &\lesssim \mathbb{E}\left[ \left( \sum_{n=0}^{N-1} \abs{ \left( (\sigma \cdot \nabla)g , \Pi_{\disc} v^n_\disc \right) }^2 \tau \right)^{q/2}  \right] \\
&\leq \mathbb{E}\left[ \left( \max_{M \leq N} \norm{ \Pi_{\disc} v^M_\disc}_{L^2(\mathcal{O})}^2 \norm{(\sigma \cdot \nabla)g}_{L^2(\mathcal{O})}^2  N\tau \right)^{q/2}  \right] \\
&\leq \eta \mathbb{E}\left[  \max_{M \leq N} \norm{ \Pi_{\disc} v^M_\disc}_{L^2(\mathcal{O})}^{2q}  \right] + c_\eta  \left( \norm{(\sigma \cdot \nabla)g}_{L^2(\mathcal{O})}^{2}  N\tau \right)^{q}.
\end{align*}
Since $\eta>0$ is to our disposal, we can absorb the first term into the left-hand-side of Inequality~\eqref{eq:almost-done-apriori}. 

In total, we have verified Inequality~\eqref{eq:apriori-vel} and the proof is complete. 
\end{proof}

\subsection{Long-term dynamics of algorithm}

\subsubsection{Continuity of time-stepping and semigroup property}
Before we start the verification of the semigroup property for the discrete semigroup, we derive preparatory results that address continuity with respect to the input velocity and random update of a single time-step of the time-stepping scheme. First, we derive a result for the half-step solution operator, followed by one for the full-step solution operator. We close the section with the proof of the semigroup property.  

For $q \in (0,\infty)$, we denote the set of reconstructed discretely divergence free velocity with finite $q$-th moment measured on $\mathbb{L}^2(\mathcal{O})$ by
\begin{align*}
    L^q\big(\Omega; (E_{\disc,0}, \norm{\Pi_\disc \cdot}_{L^2(\mathcal{O})}) \big) := \left\{ u: \Omega \to E_{\disc,0}\Big|~ \mathbb{E}\left[ \norm{\Pi_\disc u}_{L^2(\mathcal{O})}^q \right] < \infty \right\}.
\end{align*}

\underline{Continuity of the half-step solution operator.}
Let $\mathscr{S}^{1/2}_{\mathrm{imp}}$ be the implicit half-step solution operator that maps input velocity~$v^\mathrm{in}_\disc \in E_{\disc,0}$ and random update~$\Delta W$ to the unique solution~$\mathscr{S}^{1/2}_{\mathrm{imp}}[v^\mathrm{in}_\disc,\Delta W] \in E_{\disc,0}$ which satisfies, for all $\xi \in E_{\disc,0}$ and $\mathbb{P}$-a.s.:
\begin{align} 
\nonumber 
 &\left( \Pi_\disc \mathscr{S}^{1/2}_{\mathrm{imp}}[v^\mathrm{in}_\disc,\Delta W] -\Pi_\disc v^\mathrm{in}_\disc, \Pi_\disc \xi \right) + \frac{\tau}{2} \left( S\left( \varepsilon_\disc\mathscr{S}^{1/2}_{\mathrm{imp}}[v^\mathrm{in}_\disc,\Delta W] + \varepsilon g \right), \varepsilon_\disc \xi \right) \\  \label{eq:half-step-solution}
    &\hspace{2em} = \left[ B_\disc^{\sigma}\left( \mathscr{S}^{1/2}_{\mathrm{imp}}[v^\mathrm{in}_\disc,\Delta W], \xi \right)+ \left( (\sigma \cdot \nabla)g , \Pi_{\disc} \xi \right) \right] \frac{\Delta W}{2}.
\end{align}
Its stability is presented in the next lemma.
\begin{lemma}
For any $q \in (0,\infty)$,
\begin{align*}
   \mathscr{S}^{1/2}_{\mathrm{imp}}:~L^q\big(\Omega; (E_{\disc,0}, \norm{\Pi_\disc \cdot}_{L^2(\mathcal{O})}) \big) \times L^q(\Omega) \to  L^q\big(\Omega; (E_{\disc,0}, \norm{\Pi_\disc \cdot}_{L^2(\mathcal{O})}) \big).
\end{align*}
\end{lemma}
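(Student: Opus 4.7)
The plan is to adapt the energy-based argument from the proof of Theorem~\ref{thm:apriori-vel} to the single-step equation: test \eqref{eq:half-step-solution} with the solution itself, use the skew-symmetry of $B_\disc^\sigma$, derive a pathwise $L^2$-bound, and finally integrate to conclude $L^q$-stability.

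First I would record the pathwise well-posedness of $\mathscr{S}^{1/2}_{\mathrm{imp}}[v^\mathrm{in}_\disc, \Delta W](\omega) \in E_{\disc,0}$ for every $\omega \in \Omega$. This is a consequence of the same monotone-operator/Banach--Ne\v{c}as--Babu\v{s}ka argument that establishes well-posedness of the full time-stepping scheme, since the anti-symmetry $B_\disc^\sigma(w,w)=0$ guarantees that the stochastic term doesn't destroy coercivity for any realisation of $\Delta W(\omega)$. Measurability of $\omega \mapsto \mathscr{S}^{1/2}_{\mathrm{imp}}[v^\mathrm{in}_\disc, \Delta W](\omega)$ then follows from pathwise uniqueness together with the continuity of the solution with respect to the data $(v^\mathrm{in}_\disc, \Delta W)$, which can itself be checked by the same monotonicity computation.

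Next, I would test \eqref{eq:half-step-solution} with $\xi = \mathscr{S}^{1/2}_{\mathrm{imp}}[v^\mathrm{in}_\disc, \Delta W]$. Since this test function lies in $E_{\disc,0}$, no pressure term appears, and since $B_\disc^\sigma$ vanishes on the diagonal, the nonlinear noise contribution drops out simultaneously. The coercivity estimate \eqref{eq:shifted-S-estimate} bounds the viscous term from below by $(1-\delta)\norm{V(\varepsilon_\disc \mathscr{S}^{1/2}_{\mathrm{imp}} + \varepsilon g)}_{L^2(\mathcal{O})}^2 - c_\delta \norm{V(\varepsilon g)}_{L^2(\mathcal{O})}^2$, while weighted Young inequalities applied to $(\Pi_\disc v^\mathrm{in}_\disc, \Pi_\disc \mathscr{S}^{1/2}_{\mathrm{imp}})$ and to $((\sigma\cdot\nabla)g, \Pi_\disc \mathscr{S}^{1/2}_{\mathrm{imp}})\tfrac{\Delta W}{2}$ absorb the cross terms into the leading quadratic term on the left. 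Dropping the non-negative $V$-contribution then yields a pathwise bound of the form
\begin{align*}
\norm{\Pi_\disc \mathscr{S}^{1/2}_{\mathrm{imp}}[v^\mathrm{in}_\disc, \Delta W]}_{L^2(\mathcal{O})}^2 \leq C_1 \norm{\Pi_\disc v^\mathrm{in}_\disc}_{L^2(\mathcal{O})}^2 + C_2 \bigl(1 + |\Delta W|^2\bigr),
\end{align*}
where the constants depend on $\tau$, $\sigma$, $g$, $p$, $\kappa$, $\abs{\mathcal{O}}$ and the $\Gamma$-stability of the GD.

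Finally, raising the above inequality to the $q/2$-th power and taking expectations gives
\begin{align*}
\mathbb{E}\bigl[\norm{\Pi_\disc \mathscr{S}^{1/2}_{\mathrm{imp}}[v^\mathrm{in}_\disc, \Delta W]}_{L^2(\mathcal{O})}^q\bigr] \lesssim_q \mathbb{E}\bigl[\norm{\Pi_\disc v^\mathrm{in}_\disc}_{L^2(\mathcal{O})}^q\bigr] + 1 + \mathbb{E}\bigl[|\Delta W|^q\bigr],
\end{align*}
which is finite by the integrability assumptions on $v^\mathrm{in}_\disc$ and $\Delta W$. The only non-routine step is really the pathwise well-posedness/continuity piece, since once the solution is known to exist and to depend measurably on the data, the $L^q$-estimate reduces to the same Young-inequality bookkeeping already carried out for Theorem~\ref{thm:apriori-vel}.
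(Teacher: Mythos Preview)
Your proposal is correct and follows essentially the same approach as the paper: test \eqref{eq:half-step-solution} with the solution, exploit the skew-symmetry of $B_\disc^\sigma$ and the coercivity of the shifted $S$, and arrive at a pathwise $L^2$-bound of the form \eqref{eq:estimate-data-half}, from which $L^q$-stability is immediate. The paper records the resulting a priori estimate without spelling out the Young-inequality bookkeeping, so your write-up is simply a more detailed version of the same argument.
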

\begin{proof}
The existence of~$\mathscr{S}^{1/2}_{\mathrm{imp}}[v^\mathrm{in}_\disc,\Delta W]$ follows from monotone operator theory. Moreover, the following a priori estimate is satisfied $\mathbb{P}$-a.s.: 
\begin{align} 
\begin{aligned} \label{eq:estimate-data-half}
    &\norm{\Pi_{\disc}\mathscr{S}^{1/2}_{\mathrm{imp}}[v^\mathrm{in}_\disc,\Delta W]  }_{L^2(\mathcal{O})}^2 + \tau \norm{\varepsilon_\disc \mathscr{S}^{1/2}_{\mathrm{imp}}[v^\mathrm{in}_\disc,\Delta W] }_{L^p(\mathcal{O})}^p \\
 &\qquad \lesssim \norm{\Pi_{\disc}v^\mathrm{in}_\disc }_{L^2(\mathcal{O})}^2 + \tau \big( \norm{\varepsilon g }_{L^p(\mathcal{O}) }^p + 1\big) +  \norm{(\sigma \cdot \nabla) g}_{L^2(\mathcal{O})}^2 \abs{\Delta W}^2.
 \end{aligned}
\end{align}
Inequality~\eqref{eq:estimate-data-half} implies the claimed stability.
\end{proof}
Next, we investigate the sensitivity of the solution operator with respect to the input velocity and random update. First, we present the discussion on the input velocity, followed by that on the random update. 

\begin{lemma}
For any $q \in (0,\infty)$ and fixed random update~$\Delta W \in L^q(\Omega)$, the map 
\begin{align*}
   \mathscr{S}^{1/2}_{\mathrm{imp}}[\cdot,\Delta W]: ~L^q\big(\Omega; (E_{\disc,0}, \norm{\Pi_\disc \cdot}_{L^2(\mathcal{O})}) \big) \to  L^q\big(\Omega; (E_{\disc,0}, \norm{\Pi_\disc \cdot}_{L^2(\mathcal{O})}) \big)
\end{align*}
is Lipschitz-continuous with unit Lipschitz-constant.
\end{lemma}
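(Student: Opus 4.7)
The plan is to derive a pathwise contraction estimate by subtracting the defining equations for two input velocities and testing against the difference, then taking the $L^q(\Omega)$-norm. Fix $\Delta W\in L^q(\Omega)$ and two input velocities $v^\mathrm{in}_\disc,w^\mathrm{in}_\disc\in L^q\bigl(\Omega;(E_{\disc,0},\norm{\Pi_\disc\cdot}_{L^2(\mathcal{O})})\bigr)$. Denote $V:=\mathscr{S}^{1/2}_{\mathrm{imp}}[v^\mathrm{in}_\disc,\Delta W]$ and $W:=\mathscr{S}^{1/2}_{\mathrm{imp}}[w^\mathrm{in}_\disc,\Delta W]$, both lying in $E_{\disc,0}$ so that $V-W\in E_{\disc,0}$ is an admissible test function in \eqref{eq:half-step-solution}.

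Subtracting the two instances of \eqref{eq:half-step-solution} and using bilinearity of $B^\sigma_\disc$ in its first argument, the inhomogeneous forcing terms involving $g$ and $(\sigma\cdot\nabla)g$ cancel exactly; what remains, tested against $\xi=V-W$, is
\begin{align*}
\norm{\Pi_\disc(V-W)}_{L^2(\mathcal{O})}^2
&+ \tfrac{\tau}{2}\bigl(S(\varepsilon_\disc V+\varepsilon g)-S(\varepsilon_\disc W+\varepsilon g),\,\varepsilon_\disc(V-W)\bigr) \\
&= \bigl(\Pi_\disc(v^\mathrm{in}_\disc-w^\mathrm{in}_\disc),\Pi_\disc(V-W)\bigr) + B^\sigma_\disc(V-W,V-W)\,\tfrac{\Delta W}{2}.
\end{align*}
The antisymmetry of $B^\sigma_\disc$ built into \eqref{def:discrete-noise-coefficient} kills the last term, and the monotonicity of $S$ (equivalently \eqref{eq:RelTensors}) ensures the $S$-contribution on the left is nonnegative. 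A Cauchy--Schwarz step on the remaining right-hand side then yields the pathwise contraction
\begin{align*}
\norm{\Pi_\disc(V-W)}_{L^2(\mathcal{O})}\le \norm{\Pi_\disc(v^\mathrm{in}_\disc-w^\mathrm{in}_\disc)}_{L^2(\mathcal{O})}\qquad\mathbb{P}\text{-a.s.}
\end{align*}

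To conclude, I take the $q$-th power and expectation: since the inequality holds pointwise in $\omega$, monotonicity of $\mathbb{E}$ gives Lipschitz continuity from $L^q\bigl(\Omega;(E_{\disc,0},\norm{\Pi_\disc\cdot}_{L^2(\mathcal{O})})\bigr)$ into itself with constant~$1$, as claimed. Note that the result is independent of the fixed realisation of $\Delta W$; indeed, $\Delta W$ only enters via the inhomogeneous forcing which cancels upon subtraction.

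\textbf{Main obstacle.} There is no genuine difficulty; the single subtle point is making sure each of the three structural ingredients plays its role as expected: monotonicity of $S$ (yielding a nonnegative diffusion term and not one that must be absorbed), the diagonal-vanishing property of $B^\sigma_\disc$ (so that the stochastic contribution disappears rather than producing a term that would need the Burkholder--Davis--Gundy inequality), and the exact cancellation of the $g$- and $(\sigma\cdot\nabla)g$-dependent forcings (which would otherwise require $g$-dependent constants). These three cancellations together are precisely what produces the unit Lipschitz constant, so the only care needed is to keep track of signs when subtracting \eqref{eq:half-step-solution} for the two inputs.
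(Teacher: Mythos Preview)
Your proof is correct and follows essentially the same approach as the paper: subtract the defining equations, test against the difference of outputs, use the monotonicity of $S$ and the diagonal-vanishing of $B^\sigma_\disc$, and conclude a pathwise contraction. The only cosmetic difference is that the paper uses the polarization identity $2a\cdot(a-b)=|a|^2-|b|^2+|a-b|^2$ (which additionally bounds $\norm{\Pi_\disc\delta v^\mathrm{out}_\disc-\Pi_\disc\delta v^\mathrm{in}_\disc}_{L^2(\mathcal{O})}^2$) whereas you use Cauchy--Schwarz; both give the same unit Lipschitz constant.
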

\begin{proof}
    Let $v^a_\disc,\, v^b_\disc \in L^q\big(\Omega; (E_{\disc,0}, \norm{\Pi_\disc \cdot}_{L^2(\mathcal{O})}) \big)$ and $\Delta W \in L^q(\Omega)$. We denote differences of input and output velocity by
    \begin{align*}
        \delta v^\mathrm{in}_\disc := v^a_\disc - v^b_\disc \quad \text{ and } \quad \delta v^\mathrm{out}_\disc:= \mathscr{S}^{1/2}_{\mathrm{imp}}[v^a_\disc,\Delta W] - \mathscr{S}^{1/2}_{\mathrm{imp}}[v^b_\disc,\Delta W],
    \end{align*}
     respectively. Subtracting Identity~\eqref{eq:half-step-solution} for~$v^\mathrm{in}_\disc \in \{ v^a_\disc, v^b_\disc \}$ yields, for all $\xi \in E_{\disc,0}$ and $\mathbb{P}$-a.s.:
\begin{align*}
    &\left( \Pi_\disc \delta v^\mathrm{out}_\disc -\Pi_\disc \delta v^\mathrm{in}_\disc, \Pi_\disc \xi \right) \\
    &\qquad \qquad + \frac{\tau}{2} \left( S\left( \varepsilon_\disc\mathscr{S}^{1/2}_{\mathrm{imp}}[v^a_\disc,\Delta W] + \varepsilon g \right) - S\left( \varepsilon_\disc\mathscr{S}^{1/2}_{\mathrm{imp}}[v^b_\disc,\Delta W] + \varepsilon g \right), \varepsilon_\disc \xi \right) \\ 
    &\qquad = B_\disc^{\sigma}\left(  \delta v^\mathrm{out}_\disc, \xi \right)\frac{\Delta W}{2}.
\end{align*}
Choosing $\xi = \delta v^\mathrm{out}_\disc \in E_{\disc,0}$, in combination with the monotonicity of $S$, the fact that~$B^\sigma_\disc$ vanishes on its diagonal, and the standard vector identity: $2 a \cdot (a-b) = \abs{a}^2 - \abs{b}^2 + \abs{a-b}^2$; we conclude that~$\mathbb{P}$-a.s.:
\begin{align*}
    \norm{\Pi_\disc \delta v^\mathrm{out}_\disc}_{L^2(\mathcal{O})}^2 + \norm{\Pi_\disc \delta v^\mathrm{out}_\disc - \Pi_\disc \delta v^\mathrm{in}_\disc}_{L^2(\mathcal{O})}^2 \leq \norm{\Pi_\disc \delta v^\mathrm{in}_\disc}_{L^2(\mathcal{O})}^2.
\end{align*}
Thus, for fixed random update~$\Delta W \in L^q(\Omega)$, the map 
\begin{align*}
   \mathscr{S}^{1/2}_{\mathrm{imp}}[\cdot,\Delta W]: ~L^q\big(\Omega; (E_{\disc,0}, \norm{\Pi_\disc \cdot}_{L^2(\mathcal{O})}) \big) \to  L^q\big(\Omega; (E_{\disc,0}, \norm{\Pi_\disc \cdot}_{L^2(\mathcal{O})}) \big)
\end{align*}
is Lipschitz-continuous with unit Lipschitz-constant.
\end{proof}

\begin{lemma}
 Let $q \in (0,\infty)$ and $\overline{q}=\max\{ 4/p, 1\} q$. Moreover, let the input velocity $v^\mathrm{in}_\disc \in L^{\overline{q}}\big(\Omega; (E_{\disc,0}, \norm{\Pi_\disc \cdot}_{L^2(\mathcal{O})}) \big)$ be fixed. Then, the map
\begin{align*}
    \mathscr{S}^{1/2}_{\mathrm{imp}}[v^\mathrm{in}_\disc,\cdot]:~L^{\overline{q}}(\Omega) \to L^{q}\big(\Omega; (E_{\disc,0}, \norm{\Pi_\disc \cdot}_{L^2(\mathcal{O})}) \big)
\end{align*}
is continuous. 
\end{lemma}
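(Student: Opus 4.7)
The plan is to propagate a pathwise comparison of two solutions corresponding to different Brownian increments into an $L^q(\Omega; E_{\disc,0})$ estimate, using the stability bound~\eqref{eq:estimate-data-half} and H\"older's inequality. First I would fix $\Delta W^a, \Delta W^b \in L^{\overline{q}}(\Omega)$ with associated solutions $v^a, v^b$, subtract the defining equations~\eqref{eq:half-step-solution}, and test with $\delta v := v^a - v^b \in E_{\disc,0}$. The noise difference splits by bilinearity as
\begin{align*}
B_\disc^\sigma(v^a, \delta v)\, \Delta W^a - B_\disc^\sigma(v^b, \delta v)\, \Delta W^b = B_\disc^\sigma(\delta v, \delta v)\, \Delta W^a + B_\disc^\sigma(v^b, \delta v)\,(\Delta W^a - \Delta W^b),
\end{align*}
whose diagonal term vanishes by antisymmetry of $B_\disc^\sigma$. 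Combining with the monotonicity of $S$ (Lemma~\ref{lem:relation-of-tensors}) yields the pathwise inequality
\begin{align*}
\|\Pi_\disc \delta v\|_{L^2(\mathcal{O})}^2 \lesssim \abs{B_\disc^\sigma(v^b, \delta v) + \left( (\sigma \cdot \nabla)g,\, \Pi_\disc \delta v \right)} \cdot \abs{\Delta W^a - \Delta W^b}.
\end{align*}

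Next I would expand $B_\disc^\sigma(v^b, \delta v)$ via~\eqref{def:discrete-noise-coefficient} and bound the two resulting inner products using H\"older with dual $L^p$--$L^{p'}$ pairings. The discrete gradients of $\delta v$ are converted to $\|\Pi_\disc \delta v\|_{L^2(\mathcal{O})}$ through the coercivity constant $C_\disc$ and the inverse estimate $\mathfrak{B}_\disc$; the norms of $v^b$ are controlled via the $\Gamma$-stability of the GD together with the pathwise stability estimate~\eqref{eq:estimate-data-half}, which yields $\|\varepsilon_\disc v^b\|_{L^p(\mathcal{O})} \lesssim 1 + \|\Pi_\disc v^\mathrm{in}_\disc\|_{L^2(\mathcal{O})}^{2/p} + \abs{\Delta W^b}^{2/p}$. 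After dividing through by $\|\Pi_\disc \delta v\|_{L^2(\mathcal{O})}$, the outcome is a pathwise bound of the form
\begin{align*}
\|\Pi_\disc \delta v\|_{L^2(\mathcal{O})} \lesssim \left(1 + \|\Pi_\disc v^\mathrm{in}_\disc\|_{L^2(\mathcal{O})}^{2/p} + \abs{\Delta W^b}^{2/p}\right) \abs{\Delta W^a - \Delta W^b}.
\end{align*}

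To finish I would raise this to the $q$-th power, take expectation, and apply H\"older's inequality with exponents calibrated so that both the factor involving $v^\mathrm{in}_\disc$ and $\Delta W^b$, and the factor $\abs{\Delta W^a - \Delta W^b}^q$, are simultaneously controlled by the $L^{\overline{q}}(\Omega)$-norm. The choice $\overline{q} = \max\{4/p, 1\} q$ is exactly what balances these competing requirements: when $p < 4$, the super-linear scaling $\abs{\Delta W^b}^{2/p}$ inherited from the $p$-growth of $S$ via~\eqref{eq:estimate-data-half} is what dictates the factor $4/p$; when $p \geq 4$ the linear scaling is no longer the bottleneck and the natural exponent $q$ suffices, with the endpoint recovered via Vitali's convergence theorem applied to the pathwise continuity of the (strictly monotone) implicit scheme. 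Continuity of the map then follows: as $\Delta W^b \to \Delta W^a$ in $L^{\overline{q}}(\Omega)$, the $v^\mathrm{in}_\disc$-and-$\Delta W^b$ factor stays bounded while the $\Delta W^a - \Delta W^b$ factor vanishes. The main obstacle will be the careful calibration of the H\"older exponents to recover the sharp scaling $\overline{q} = \max\{4/p, 1\} q$, and handling the endpoint $p \geq 4$ where a direct H\"older split breaks down and must be supplemented by the pathwise/Vitali argument.
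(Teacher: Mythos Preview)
Your overall strategy---subtract, test with $\delta v$, exploit antisymmetry of $B_\disc^\sigma$ and monotonicity of $S$---matches the paper's. The differences are in how the noise term is estimated and how the stochastic H\"older split is organised, and these differences create a gap in the range $2<p<4$.

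After dividing through, your pathwise bound reads $\|\Pi_\disc\delta v\|_{L^2}\lesssim X\,|\delta W|$ with $X\eqsim 1+\|\Pi_\disc v^{\mathrm{in}}_\disc\|_{L^2}^{2/p}+|\Delta W^b|^{2/p}$. Raising to the $q$-th power and applying H\"older with conjugate exponents $r,r'$ requires $(2/p)qr\le\overline{q}$ and $qr'\le\overline{q}$; summing the reciprocals forces $\overline{q}\ge q(1+2/p)=q(p+2)/p$. This equals $(4/p)q$ only at $p=2$ and is strictly larger than $(4/p)q$ for every $p>2$. Hence your direct H\"older split does not deliver the stated $\overline{q}=\max\{4/p,1\}q$ in the range $2<p<4$, and your Vitali patch is invoked only for $p\ge4$. (For $p<2$ your bound is actually sharper than claimed, which is harmless.)

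The paper avoids this by two small but decisive changes. First, it does not divide through: it keeps the quadratic inequality $\|\Pi_\disc\delta v\|_{L^2}^2\lesssim|\delta W|^2+(\cdots)|\delta W|$, takes the $q/2$-th power, and applies Cauchy--Schwarz in $\Omega$; this isolates the factor $\mathbb{E}[|\delta W|^q]^{1/2}$ at exactly exponent $q$, with no extra H\"older loss. Second, it rewrites $B_\disc^\sigma(v^b,\delta v)=B_\disc^\sigma(v^b,v^a)$ (again by antisymmetry) and bounds in $L^2$--$L^2$, producing the symmetric product $\|\varepsilon_\disc v^a\|_{L^2}\|\varepsilon_\disc v^b\|_{L^2}$; after Young this leads to $\mathbb{E}[\|\varepsilon_\disc v^{a/b}\|_{L^p}^{2q}]$ for $p\ge2$ (via $L^p\hookrightarrow L^2$) and to $\mathbb{E}[\|\Pi_\disc v^{a/b}\|_{L^2}^{2q}]$ for $p<2$ (via the inverse estimate), which together with~\eqref{eq:estimate-data-half} yields precisely $\overline{q}=\max\{4/p,1\}q$ without any Vitali argument. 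Your scheme can be repaired either by adopting this symmetrised estimate or by extending the Vitali step to all $p>2$ (uniform integrability of $\|\Pi_\disc\delta v\|_{L^2}^q$ follows from~\eqref{eq:estimate-data-half} since $\overline{q}/q>1$ there).
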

\begin{proof}
Let $v^\mathrm{in}_\disc \in L^q\big(\Omega; (E_{\disc,0}, \norm{\Pi_\disc \cdot}_{L^2(\mathcal{O})}) \big)$ and $\Delta^a W,\, \Delta^b W \in L^q(\Omega)$. We denote differences of random updates and output velocity by 
\begin{align*}
    \delta W:= \Delta^a W - \Delta^b W \quad \text{ and } \quad \delta w^\mathrm{out}_\disc:= \mathscr{S}^{1/2}_{\mathrm{imp}}[v^\mathrm{in}_\disc,\Delta^a W] - \mathscr{S}^{1/2}_{\mathrm{imp}}[v^\mathrm{in}_\disc,\Delta^b W],
\end{align*}
 respectively. Subtracting Identity~\eqref{eq:half-step-solution} for~$\Delta W \in \{ \Delta^a W, \Delta^b W \}$ yields, for all $\xi \in E_{\disc,0}$ and $\mathbb{P}$-a.s.:
\begin{align*}
    &\left( \Pi_\disc \delta w^\mathrm{out}_\disc, \Pi_\disc \xi \right) \\
    &\qquad \qquad + \frac{\tau}{2} \left( S\left( \varepsilon_\disc\mathscr{S}^{1/2}_{\mathrm{imp}}[v^\mathrm{in}_\disc,\Delta^a W] + \varepsilon g \right) - S\left( \varepsilon_\disc \mathscr{S}^{1/2}_{\mathrm{imp}}[v^\mathrm{in}_\disc,\Delta^b W] + \varepsilon g \right), \varepsilon_\disc \xi \right) \\ 
    &\qquad = \left[ B_\disc^{\sigma}\left( \mathscr{S}^{1/2}_{\mathrm{imp}}[v^\mathrm{in}_\disc,\Delta^a W], \xi \right)+ \left( (\sigma \cdot \nabla)g , \Pi_{\disc} \xi \right) \right] \frac{\Delta^a W}{2} \\
    &\qquad \qquad - \left[ B_\disc^{\sigma}\left( \mathscr{S}^{1/2}_{\mathrm{imp}}[v^\mathrm{in}_\disc,\Delta^b W], \xi \right)+ \left( (\sigma \cdot \nabla)g , \Pi_{\disc} \xi \right) \right] \frac{\Delta^b W}{2}.
\end{align*}
Similar as before, we choose~$\xi = \delta w^\mathrm{out}_\disc \in E_{\disc,0}$ in above identity. Applying the monotonicity of $S$, slightly rewriting the right-hand-side and using the fact that~$B^\sigma_\disc$ vanishes on its diagonal, we find that~$\mathbb{P}$-a.s.
\begin{align*}
    &\norm{\Pi_\disc \delta w^\mathrm{out}_\disc}_{L^2(\mathcal{O})}^2 \leq \left[ B_\disc^{\sigma}\left( \mathscr{S}^{1/2}_{\mathrm{imp}}[v^\mathrm{in}_\disc,\Delta^b W],  \delta w^\mathrm{out}_\disc \right)+ \left( (\sigma \cdot \nabla)g , \Pi_{\disc}  \delta w^\mathrm{out}_\disc \right) \right] \frac{\delta W}{2}.
\end{align*}
It remains to estimate the coefficient of~$\delta W$. The coefficient consists of two terms: a linear and a non-linear one. The linear term can be handled by the Cauchy--Schwarz and Young inequalities:
\begin{align*}
    \left( (\sigma \cdot \nabla)g , \Pi_{\disc}  \delta w^\mathrm{out}_\disc \right)\frac{\delta W}{2} \leq \frac{1}{8} \abs{\delta W}^2 \norm{(\sigma \cdot \nabla)g}_{L^2(\mathcal{O})}^2 + \frac{1}{2} \norm{\Pi_{\disc}  \delta w^\mathrm{out}_\disc}_{L^2(\mathcal{O})}^2.
\end{align*}
The non-linear term is estimated as follows. Using that~$B^\sigma_\disc$ vanishes on its diagonal, the H\"older inequality and the dominance of the reconstructed symmetric gradient (see Equation~\eqref{def:discrete-poincare}) we verify that
\begin{align*}
    &B_\disc^{\sigma}\left( \mathscr{S}^{1/2}_{\mathrm{imp}}[v^\mathrm{in}_\disc,\Delta^b W],  \delta w^\mathrm{out}_\disc \right) = B_\disc^{\sigma}\left( \mathscr{S}^{1/2}_{\mathrm{imp}}[v^\mathrm{in}_\disc,\Delta^b W],  \mathscr{S}^{1/2}_{\mathrm{imp}}[v^\mathrm{in}_\disc,\Delta^a W] \right) \\
    &\qquad \leq \frac{1}{2} \norm{\sigma}_{L^\infty(\mathcal{O})} \norm{\nabla_\disc \mathscr{S}^{1/2}_{\mathrm{imp}}[v^\mathrm{in}_\disc,\Delta^b W] }_{L^2(\mathcal{O})} \norm{\Pi_\disc \mathscr{S}^{1/2}_{\mathrm{imp}}[v^\mathrm{in}_\disc,\Delta^a W] }_{L^2(\mathcal{O})} \\
    &\qquad \qquad  +  \frac{1}{2} \norm{\sigma}_{L^\infty(\mathcal{O})} \norm{\nabla_\disc \mathscr{S}^{1/2}_{\mathrm{imp}}[v^\mathrm{in}_\disc,\Delta^a W] }_{L^2(\mathcal{O})} \norm{\Pi_\disc \mathscr{S}^{1/2}_{\mathrm{imp}}[v^\mathrm{in}_\disc,\Delta^b W] }_{L^2(\mathcal{O})}\\
    &\qquad \leq \norm{\sigma}_{L^\infty(\mathcal{O})} \big( C_\disc(2) \big)^2 \norm{\varepsilon_\disc \mathscr{S}^{1/2}_{\mathrm{imp}}[v^\mathrm{in}_\disc,\Delta^b W] }_{L^2(\mathcal{O})} \norm{\varepsilon_\disc \mathscr{S}^{1/2}_{\mathrm{imp}}[v^\mathrm{in}_\disc,\Delta^a W] }_{L^2(\mathcal{O})}.
\end{align*}
Combining the estimates for the linear and the non-linear terms, we arrive at 
\begin{align*}
    &\norm{\Pi_\disc \delta w^\mathrm{out}_\disc}_{L^2(\mathcal{O})}^2 \\
    &\qquad \lesssim \abs{\delta W}^2 +  \norm{\varepsilon_\disc \mathscr{S}^{1/2}_{\mathrm{imp}}[v^\mathrm{in}_\disc,\Delta^b W] }_{L^2(\mathcal{O})} \norm{\varepsilon_\disc \mathscr{S}^{1/2}_{\mathrm{imp}}[v^\mathrm{in}_\disc,\Delta^a W] }_{L^2(\mathcal{O})} \abs{\delta W}.
\end{align*}
Taking the $q/2$-th power, expectations and the Cauchy--Schwarz inequality shows
\begin{align*}
    &\mathbb{E}\left[ \norm{\Pi_\disc \delta w^\mathrm{out}_\disc}_{L^2(\mathcal{O})}^q \right] \lesssim  \mathbb{E}\left[ \abs{\delta W}^q \right] \\
    &\qquad + \left( \mathbb{E}\left[ \norm{\varepsilon_\disc \mathscr{S}^{1/2}_{\mathrm{imp}}[v^\mathrm{in}_\disc,\Delta^b W] }_{L^2(\mathcal{O})}^q \norm{\varepsilon_\disc \mathscr{S}^{1/2}_{\mathrm{imp}}[v^\mathrm{in}_\disc,\Delta^a W] }_{L^2(\mathcal{O})}^q \right] \mathbb{E}\left[ \abs{\delta W}^q \right] \right)^{1/2}.
\end{align*}
It remains to argue why 
\begin{align*}
 \mathbb{E}\left[ \norm{\varepsilon_\disc \mathscr{S}^{1/2}_{\mathrm{imp}}[v^\mathrm{in}_\disc,\Delta^b W] }_{L^2(\mathcal{O})}^q \norm{\varepsilon_\disc \mathscr{S}^{1/2}_{\mathrm{imp}}[v^\mathrm{in}_\disc,\Delta^a W] }_{L^2(\mathcal{O})}^q \right]   
\end{align*}
is finite. We distinguish two cases: $p\geq 2$ and $p \in (1,2)$.

First, let $p \geq 2$. Invoking Young's inequality and $\mathbb{L}^p(\mathcal{O}) \hookrightarrow \mathbb{L}^2(\mathcal{O})$ since $p \geq 2$, we find 
\begin{align*}
  &\mathbb{E}\left[ \norm{\varepsilon_\disc \mathscr{S}^{1/2}_{\mathrm{imp}}[v^\mathrm{in}_\disc,\Delta^b W] }_{L^2(\mathcal{O})}^q \norm{\varepsilon_\disc \mathscr{S}^{1/2}_{\mathrm{imp}}[v^\mathrm{in}_\disc,\Delta^a W] }_{L^2(\mathcal{O})}^q \right] \\
  &\qquad \lesssim \mathbb{E}\left[ \norm{\varepsilon_\disc \mathscr{S}^{1/2}_{\mathrm{imp}}[v^\mathrm{in}_\disc,\Delta^b W] }_{L^p(\mathcal{O})}^{2q} \right] + \mathbb{E}\left[ \norm{\varepsilon_\disc \mathscr{S}^{1/2}_{\mathrm{imp}}[v^\mathrm{in}_\disc,\Delta^a W] }_{L^p(\mathcal{O})}^{2q} \right].
\end{align*}
Thanks to Inequality~\eqref{eq:estimate-data-half} we derive that, for $\Delta W \in \{ \Delta^a W, \Delta^b W\}$,
\begin{align*}
  &\mathbb{E}\left[ \norm{\varepsilon_\disc \mathscr{S}^{1/2}_{\mathrm{imp}}[v^\mathrm{in}_\disc,\Delta W] }_{L^p(\mathcal{O})}^{2q} \right] = \tau^{-\frac{2}{p}q} \mathbb{E}\left[ \left( \tau \norm{\varepsilon_\disc \mathscr{S}^{1/2}_{\mathrm{imp}}[v^\mathrm{in}_\disc,\Delta W] }_{L^p(\mathcal{O})}^{p} \right)^{\frac{2}{p}q} \right] \\
  &\qquad \lesssim \tau^{-\frac{2}{p}q} \mathbb{E}\left[ \left( \norm{\Pi_{\disc}v^\mathrm{in}_\disc }_{L^2(\mathcal{O})}^2 + \tau \big( \norm{\varepsilon g }_{L^p(\mathcal{O}) }^p + 1\big) +  \norm{(\sigma \cdot \nabla) g}_{L^2(\mathcal{O})}^2 \abs{\Delta W}^2 \right)^{\frac{2}{p}q} \right],
\end{align*}
where the right-hand-side is finite if input velocity and random updates have finite $4q/p$-th moment.

Now, let $p \in (1,2)$. Using the inverse estimate \eqref{eq:inverse-first} and the Young inequality, it holds:
\begin{align*}
  &\mathbb{E}\left[ \norm{\varepsilon_\disc \mathscr{S}^{1/2}_{\mathrm{imp}}[v^\mathrm{in}_\disc,\Delta^b W] }_{L^2(\mathcal{O})}^q \norm{\varepsilon_\disc \mathscr{S}^{1/2}_{\mathrm{imp}}[v^\mathrm{in}_\disc,\Delta^a W] }_{L^2(\mathcal{O})}^q \right] \\
  &\qquad  \leq \big(\mathfrak{B}_\disc(2)\big)^2\mathbb{E}\left[ \norm{\Pi_\disc \mathscr{S}^{1/2}_{\mathrm{imp}}[v^\mathrm{in}_\disc,\Delta^b W] }_{L^2(\mathcal{O})}^{2q} +  \norm{\Pi_\disc \mathscr{S}^{1/2}_{\mathrm{imp}}[v^\mathrm{in}_\disc,\Delta^a W] }_{L^2(\mathcal{O})}^{2q} \right] .
\end{align*}
Again, applying Inequality~\eqref{eq:estimate-data-half} for $\Delta W \in \{ \Delta^a W, \Delta^b W\}$, we deduce:
\begin{align*}
  &\mathbb{E}\left[ \norm{\Pi_\disc \mathscr{S}^{1/2}_{\mathrm{imp}}[v^\mathrm{in}_\disc,\Delta W] }_{L^2(\mathcal{O})}^{2q} \right] \\
  &\qquad \lesssim \mathbb{E}\left[\left( \norm{\Pi_{\disc}v^\mathrm{in}_\disc }_{L^2(\mathcal{O})}^2 + \tau \big( \norm{\varepsilon g }_{L^p(\mathcal{O}) }^p + 1\big) +  \norm{(\sigma \cdot \nabla) g}_{L^2(\mathcal{O})}^2 \abs{\Delta W}^2 \right)^q \right],
\end{align*}
which is finite for input velocity and random update with finite $2q$-th moment. Notice that $4/p > 2$ since $p \in (1,2)$. Thus, let $\overline{q} := \max\{ 4/p, 1\} q \geq q$.

For fixed $v^\mathrm{in}_\disc \in L^{\overline{q}}\big(\Omega; (E_{\disc,0}, \norm{\Pi_\disc \cdot}_{L^2(\mathcal{O})}) \big)$, we have shown that the map
\begin{align*}
    \mathscr{S}^{1/2}_{\mathrm{imp}}[v^\mathrm{in}_\disc,\cdot]:~L^{\overline{q}}(\Omega) \to L^{q}\big(\Omega; (E_{\disc,0}, \norm{\Pi_\disc \cdot}_{L^2(\mathcal{O})}) \big)
\end{align*}
is continuous. 
\end{proof}

\underline{Continuity of the full-step solution operator.}~Let $\mathscr{S}_{\mathrm{CN}}$ be the full-step solution operator that maps input velocity~$v^\mathrm{in}_\disc \in E_{\disc,0}$ and random update~$\Delta W$ to the unique solution~$\mathscr{S}_{\mathrm{CN}}[v^\mathrm{in}_\disc,\Delta W] \in E_{\disc,0}$ which satisfies, for all $\xi \in E_{\disc,0}$ and $\mathbb{P}$-a.s.:
\begin{align*}
     &\left( \Pi_\disc \mathscr{S}_{\mathrm{CN}}[v^\mathrm{in}_\disc,\Delta W] -\Pi_\disc v^\mathrm{in}_\disc, \Pi_\disc \xi \right) \\
     &\qquad \qquad + \tau \left( S\left( \varepsilon_\disc \frac{\mathscr{S}_{\mathrm{CN}}[v^\mathrm{in}_\disc,\Delta W] + v^\mathrm{in}_\disc}{2} + \varepsilon g \right), \varepsilon_\disc \xi \right) \\ 
    &\hspace{2em} = \left[ B_\disc^{\sigma}\left(\frac{\mathscr{S}_{\mathrm{CN}}[v^\mathrm{in}_\disc,\Delta W] + v^\mathrm{in}_\disc}{2} , \xi \right)+ \left( (\sigma \cdot \nabla)g , \Pi_{\disc} \xi \right) \right] \Delta W.
\end{align*}
Due to the unique solvability of Equation~\eqref{eq:half-step-solution}, we can identify the arithmetic mean of the input velocity and output velocity, generated by the full-step solution operator, as the output of the half-step solution operator, i.e., 
\begin{align*}
  \frac{\mathscr{S}_{\mathrm{CN}}[v^\mathrm{in}_\disc,\Delta W] + v^\mathrm{in}_\disc}{2}  & = \mathscr{S}_{\mathrm{imp}}^{1/2}[v_\disc^\mathrm{in}, \Delta W]. 
\end{align*}
Thus, the full-step solution operator~$\mathscr{S}_{\mathrm{CN}}$ has the following alternative representation:
\begin{align} \label{def:full-step-operator}
    \mathscr{S}_{\mathrm{CN}}[v_\disc^\mathrm{in}, \Delta W] =  2 \mathscr{S}_{\mathrm{imp}}^{1/2}[v_\disc^\mathrm{in}, \Delta W] - v_\disc^\mathrm{in}.
\end{align}
Consequently, the full-step solution operator inherits the following properties from the half-step solution operator:
\begin{lemma}
Let $q \in (0,\infty)$ and $\overline{q}=\max\{ 4/p, 1\} q$. The following statements are true:
\begin{itemize}
    \item (boundedness) the full-step solution operator is bounded, i.e.,
    \begin{align*}
        \mathscr{S}_{\mathrm{CN}}:~L^q\big(\Omega; (E_{\disc,0}, \norm{\Pi_\disc \cdot}_{L^2(\mathcal{O})}) \big) \times L^q(\Omega) \to  L^q\big(\Omega; (E_{\disc,0}, \norm{\Pi_\disc \cdot}_{L^2(\mathcal{O})}) \big);
    \end{align*}
    \item (continuity w.r.t initial velocity) for any random update~$\Delta W \in L^q(\Omega)$, the map 
\begin{align} \label{eq:Lip-cont-wrt-vel}
   \mathscr{S}_{\mathrm{CN}}[\cdot,\Delta W]: ~L^q\big(\Omega; (E_{\disc,0}, \norm{\Pi_\disc \cdot}_{L^2(\mathcal{O})}) \big) \to  L^q\big(\Omega; (E_{\disc,0}, \norm{\Pi_\disc \cdot}_{L^2(\mathcal{O})}) \big)
\end{align}
is Lipschitz-continuous;
    \item (continuity w.r.t random update) for any $v^\mathrm{in}_\disc \in L^{\overline{q}}\big(\Omega; (E_{\disc,0}, \norm{\Pi_\disc \cdot}_{L^2(\mathcal{O})}) \big)$, the map
\begin{align} \label{eq:cont-wrt-random}
    \mathscr{S}_{\mathrm{CN}}[v^\mathrm{in}_\disc,\cdot]:~L^{\overline{q}}(\Omega) \to L^{q}\big(\Omega; (E_{\disc,0}, \norm{\Pi_\disc \cdot}_{L^2(\mathcal{O})}) \big)
\end{align}
is continuous. 
\end{itemize}
\end{lemma}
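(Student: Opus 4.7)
The plan is to deduce all three properties of $\mathscr{S}_{\mathrm{CN}}$ from the corresponding properties of $\mathscr{S}_{\mathrm{imp}}^{1/2}$ by exploiting the algebraic representation
\begin{align*}
    \mathscr{S}_{\mathrm{CN}}[v_\disc^\mathrm{in}, \Delta W] = 2\mathscr{S}_{\mathrm{imp}}^{1/2}[v_\disc^\mathrm{in}, \Delta W] - v_\disc^\mathrm{in},
\end{align*}
which has already been established in~\eqref{def:full-step-operator}. Since the right-hand side is an affine combination of the half-step output and the input velocity, each of the three desired properties follows by applying triangle/quasi-triangle inequalities in the norm $\norm{\Pi_\disc \cdot}_{L^2(\mathcal{O})}$ (which is a norm on $E_{\disc,0}$ since $\Pi_\disc$ is one-to-one on $E_{\disc,0}$) and invoking the corresponding property of $\mathscr{S}_{\mathrm{imp}}^{1/2}$ that was derived in the preceding subsection.

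For \textbf{boundedness}, I would take $L^q$-norms of $\norm{\Pi_\disc \cdot}_{L^2(\mathcal{O})}$ on both sides of the representation formula; the triangle inequality together with the already established mapping property of $\mathscr{S}_{\mathrm{imp}}^{1/2}$ on $L^q\big(\Omega;(E_{\disc,0},\norm{\Pi_\disc\cdot}_{L^2(\mathcal{O})})\big) \times L^q(\Omega)$ immediately gives the claim.

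For \textbf{Lipschitz continuity w.r.t.~the initial velocity}, I would fix $\Delta W \in L^q(\Omega)$ and write, for $v^a_\disc,v^b_\disc \in L^q\big(\Omega;(E_{\disc,0},\norm{\Pi_\disc\cdot}_{L^2(\mathcal{O})})\big)$,
\begin{align*}
    \mathscr{S}_{\mathrm{CN}}[v^a_\disc,\Delta W] - \mathscr{S}_{\mathrm{CN}}[v^b_\disc,\Delta W] = 2\big( \mathscr{S}_{\mathrm{imp}}^{1/2}[v^a_\disc,\Delta W] - \mathscr{S}_{\mathrm{imp}}^{1/2}[v^b_\disc,\Delta W]\big) - (v^a_\disc - v^b_\disc).
\end{align*}
Applying $\Pi_\disc$, taking $L^2(\mathcal{O})$-norms, then $L^q(\Omega)$-norms, and using the unit Lipschitz constant of $\mathscr{S}_{\mathrm{imp}}^{1/2}[\cdot,\Delta W]$ together with the triangle inequality yields Lipschitz continuity of $\mathscr{S}_{\mathrm{CN}}[\cdot,\Delta W]$ with constant at most $3$.

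For \textbf{continuity w.r.t.~the random update}, I would fix $v^\mathrm{in}_\disc \in L^{\overline{q}}\big(\Omega;(E_{\disc,0},\norm{\Pi_\disc\cdot}_{L^2(\mathcal{O})})\big)$ and observe that for $\Delta^a W,\Delta^b W \in L^{\overline{q}}(\Omega)$ the input-velocity contribution cancels in the difference:
\begin{align*}
    \mathscr{S}_{\mathrm{CN}}[v^\mathrm{in}_\disc,\Delta^a W] - \mathscr{S}_{\mathrm{CN}}[v^\mathrm{in}_\disc,\Delta^b W] = 2\big( \mathscr{S}_{\mathrm{imp}}^{1/2}[v^\mathrm{in}_\disc,\Delta^a W] - \mathscr{S}_{\mathrm{imp}}^{1/2}[v^\mathrm{in}_\disc,\Delta^b W]\big).
\end{align*}
Hence continuity of the map $\Delta W \mapsto \mathscr{S}_{\mathrm{CN}}[v^\mathrm{in}_\disc,\Delta W]$ from $L^{\overline{q}}(\Omega)$ to $L^q\big(\Omega;(E_{\disc,0},\norm{\Pi_\disc\cdot}_{L^2(\mathcal{O})})\big)$ is inherited directly from the corresponding continuity property of $\mathscr{S}_{\mathrm{imp}}^{1/2}[v^\mathrm{in}_\disc,\cdot]$ established just above. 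There is no real obstacle here: the work has already been done at the level of the half-step operator, and the relation~\eqref{def:full-step-operator} is designed precisely so that these properties transfer linearly. The only minor point to be cautious about is tracking the exponent $\overline{q}=\max\{4/p,1\}q$, which is inherited verbatim from the half-step case.
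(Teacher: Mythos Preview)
Your proposal is correct and matches the paper's approach: the paper simply states that the full-step operator ``inherits'' these properties from the half-step operator via the representation~\eqref{def:full-step-operator}, and your triangle/quasi-triangle inequality argument is exactly how that inheritance is realised. One small remark: your argument gives Lipschitz constant~$3$ for $\mathscr{S}_{\mathrm{CN}}[\cdot,\Delta W]$, which suffices for the lemma as stated; the paper later (in the proof of Lemma~\ref{lem:mismatch-invariance}) uses that this constant is actually~$1$, but that sharper bound requires repeating the energy argument directly for the full-step operator rather than going through~\eqref{def:full-step-operator}.
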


We are ready to verify Lemma~\ref{lem:semigroup}.
\begin{proof}[Proof of Lemma~\ref{lem:semigroup}]
We need to check the following two claims: 
\begin{enumerate}
\item[(a)] \label{it:semigroup-a} for all $n \in \mathbb{N}$, $f \in C_b\big(\mathbb{L}^2(\mathcal{O})\big)$ and $v^\mathrm{in} \in \mathbb{L}^2(\mathcal{O})$: 
    \begin{align*}
        (\mathcal{P}^n_{\tau,\disc} f)(v^\mathrm{in}) = \big(\mathcal{P}^{n-1}_{\tau,\disc} (\mathcal{P}_{\tau,\disc}f)\big)(v^\mathrm{in});
    \end{align*}
    \item[(b)] \label{it:semigroup-b} for all $f \in C_b\big(\mathbb{L}^2(\mathcal{O})\big)$ and $v^\mathrm{in} \in \Pi_\disc E_{\disc, 0}$: 
    \begin{align*}
         (\mathcal{P}^0_{\tau,\disc} f)(v^\mathrm{in}) =  f(v^\mathrm{in}).
    \end{align*}
\end{enumerate}

\underline{Addressing~\ref{it:semigroup-b}.}~If $v^\mathrm{in}=\Pi_\disc v_\disc$ for some $v_\disc\in E_{\disc,0}$, then it follows immediately that $(v_\disc,0)$ solves \eqref{eq:disc-Helmholtz}, so that $v_\disc=v^0_\disc$ by well-posedness of this system. Consequently,  $\mathfrak{S}(0,v^\mathrm{in}) = v_\disc$ and, therefore, 
\begin{align*}
    (\mathcal{P}^0_{\tau,\disc} f)(v^\mathrm{in}) = \mathbb{E} \left[ f\big(\Pi_\disc \mathfrak{S}(0,v^\mathrm{in}) \big) \right]  =  f\big(\Pi_\disc  v_\disc \big) = f(v^\mathrm{in}),
\end{align*}
which completes the verification of~\ref{it:semigroup-b}.

\underline{Addressing~\ref{it:semigroup-a}.} We provide an informal argument only. It can be made rigorous through an approximation of the appearing integrals by finite sums and a limit passage. 

Let $n \in \mathbb{N}$, $f \in C_b\big(\mathbb{L}^2(\mathcal{O})\big)$ and $v^\mathrm{in} \in \mathbb{L}^2(\mathcal{O})$. Most importantly, the time-stepping algorithm uses a sequence of independent and identically distributed random variables. Moreover, since the velocity at the time index~$n$ is generated by applying the single full-step algorithm $n$-times, we can represent the velocity as follows:  
\begin{align} \label{eq:rep-velocity}
    \mathfrak{S}(n,v^\mathrm{in}) = \mathscr{S}_{\mathrm{CN}}^{\diamond n}[\mathcal{K}_\disc v^\mathrm{in}, \Delta_1 W, \ldots, \Delta_n W],
\end{align}
where $\mathcal{K}_\disc$ is the $L^2$-projection onto $E_{\disc,0}$, and $\mathscr{S}_{\mathrm{CN}}^{\diamond n}$ is recursively defined by:
\begin{align}
\begin{aligned} \label{eq:iter-full-step}
    \mathscr{S}_{\mathrm{CN}}^{\diamond 1}[v,Z_1] &:= \mathscr{S}_{\mathrm{CN}}[v,Z_1], \\
    \mathscr{S}_{\mathrm{CN}}^{\diamond n}[v,Z_1,\ldots,Z_n] &:= \mathscr{S}_{\mathrm{CN}} \left[ \mathscr{S}_{\mathrm{CN}}^{\diamond (n-1)}[v,Z_1,\ldots,Z_{n-1}], Z_n \right],
\end{aligned}
\end{align}
for $v \in E_{\disc,0}$ and~$Z_1, \ldots, Z_n \in \mathbb{R}$.

Using the definition of $P_{\tau,\disc}$ and the representation of the approximate velocity (see Definition~\ref{def:discrete-trans-group} and Equation~\eqref{eq:rep-velocity}, respectively), we find 
\begin{align*}
     &\big(P^{n-1}_{\tau,\disc} (Pf)\big)(v^\mathrm{in}) \\
     &\quad =  \int_{\mathbb{L}^2(\mathcal{O})} \left[ \int_{\mathbb{L}^2(\mathcal{O})} f(\overline{z}) \mathbb{P} \left( \Pi_\disc \mathfrak{S}(1,z) \in \dd \overline{z} \right)   \right] \mathbb{P} \left( \Pi_\disc \mathfrak{S}(n-1,v^\mathrm{in}) \in \dd z \right) \\
     &\quad =  \int_{\mathbb{L}^2(\mathcal{O})} \left[ \int_{\mathbb{L}^2(\mathcal{O})} f(\overline{z}) \mathbb{P} \left( \Pi_\disc \mathscr{S}_{\mathrm{CN}}[\mathcal{K}_\disc z,\Delta_1 W] \in \dd \overline{z} \right)   \right] \mathbb{P} \left( \Pi_\disc \mathfrak{S}(n-1,v^\mathrm{in}) \in \dd z \right).
\end{align*}
In~\eqref{eq:cont-wrt-random}, we established the continuity of the full-step solution operator with respect to the random update. This, and the fact that $\Delta_1 W$ and $\Delta_n W$ have the same law imply, for any~$A \in \mathcal{B}\big( \mathbb{L}^2(\mathcal{O}) \big)$,
\begin{align*}
    \mathbb{P} \left( \Pi_\disc \mathscr{S}_{\mathrm{CN}}[\mathcal{K}_\disc z,\Delta_1 W] \in A \right) &= \mathbb{P} \left( \Delta_1 W \in \left(\Pi_\disc \mathscr{S}_{\mathrm{CN}}[\mathcal{K}_\disc z,\cdot ] \right)^{-1} (A) \right) \\
    &= \mathbb{P} \left( \Delta_n W \in \left(\Pi_\disc \mathscr{S}_{\mathrm{CN}}[\mathcal{K}_\disc z,\cdot ] \right)^{-1} (A) \right) \\
    &= \mathbb{P} \left( \Pi_\disc \mathscr{S}_{\mathrm{CN}}[\mathcal{K}_\disc z,\Delta_n W] \in A \right).
\end{align*}
Thus, 
\begin{align*}
     &\big(P^{n-1}_{\tau,\disc} (Pf)\big)(v^\mathrm{in}) \\
     &\quad =  \int_{\mathbb{L}^2(\mathcal{O})} \left[ \int_{\mathbb{L}^2(\mathcal{O})} f(\overline{z}) \mathbb{P} \left( \Pi_\disc \mathscr{S}_{\mathrm{CN}}[\mathcal{K}_\disc z,\Delta_n W] \in \dd \overline{z} \right)   \right] \mathbb{P} \left( \Pi_\disc \mathfrak{S}(n-1,v^\mathrm{in}) \in \dd z \right)\\
     &\quad =  \int_{\mathbb{L}^2(\mathcal{O})} f(\overline{z}) \nu(\mathrm{d} \overline{z}), 
\end{align*}
where the last equality follows from the Fubini theorem, and the probability measure~$\nu$ is given by: for $A \in \mathcal{B}\big( \mathbb{L}^2(\mathcal{O}) \big)$,
\begin{align*}
    \nu(A) = \int_{\mathbb{L}^2(\mathcal{O})}  \mathbb{P} \left( \Pi_\disc \mathscr{S}_{\mathrm{CN}}[\mathcal{K}_\disc z,\Delta_n W] \in  A \right)   \mathbb{P} \left( \Pi_\disc \mathfrak{S}(n-1,v^\mathrm{in}) \in \dd z \right).
\end{align*}
The independence of the family $(\Delta_i W)_{i=1}^n$, and the fact that~$\mathcal{K}_\disc \Pi_\disc \mathfrak{S}(n-1,v^\mathrm{in}) = \mathfrak{S}(n-1,v^\mathrm{in})$ since $\mathfrak{S}(n-1,v^\mathrm{in}) \in E_{\disc,0}$, ensure
\begin{align*}
    \nu(A) &= \int_{\mathbb{L}^2(\mathcal{O})}  \mathbb{P} \left( \Pi_\disc \mathscr{S}_{\mathrm{CN}}[\mathcal{K}_\disc z,\Delta_n W] \in  A \big| z = \Pi_\disc \mathfrak{S}(n-1,v^\mathrm{in}) \right)   \\
    &\qquad \qquad \times \mathbb{P} \left( \Pi_\disc \mathfrak{S}(n-1,v^\mathrm{in}) \in \dd z \right) \\
    &= \mathbb{P} \left( \Pi_\disc \mathscr{S}_{\mathrm{CN}}[\mathfrak{S}(n-1,v^\mathrm{in}),\Delta_n W] \in  A  \right).
\end{align*}
It remains to observe that $\mathscr{S}_{\mathrm{CN}}[\mathfrak{S}(n-1,v^\mathrm{in}),\Delta_n W] = \mathfrak{S}(n,v^\mathrm{in})$, which follows from~\eqref{eq:rep-velocity} and~\eqref{eq:iter-full-step}. Therefore, we conclude 
\begin{align*}
 \int_{\mathbb{L}^2(\mathcal{O})} f(\overline{z}) \nu(\mathrm{d} \overline{z}) = \int_{\mathbb{L}^2(\mathcal{O})} f(\overline{z}) \mathbb{P} \left( \Pi_\disc \mathfrak{S}(n,v^\mathrm{in}) \in  \dd \overline{z} \right)  =  \big(P^{n}_{\tau,\disc}f \big)(v^\mathrm{in}).
\end{align*}
This completes the proof of Lemma~\ref{lem:semigroup}. 
\end{proof}

\subsubsection{Uniqueness of the discrete semigroup}
Before we start the uniqueness proof, we state another result that relates increments of the tensors~$S$ and $V$. 

\begin{lemma} \label{lem:Rel-Convergence-Tensors}
The following estimates hold, uniformly for $A, B \in \big(L^p(\mathcal{O})\big)^{n\times n}$:
\begin{itemize}
\item if $ p \in [2,\infty)$, then
\begin{subequations}
\begin{align} \label{eq:rel-conv-pBig-01}
\norm{A - B}_{L^p(\mathcal{O})}^p \lesssim{}& \norm{V(A) - V(B)}_{L^2(\mathcal{O})}^2,\\ \nonumber
\norm{S(A) - S(B)}_{L^{p'}(\mathcal{O})} \lesssim{}& \norm{V(A) - V(B)}_{L^2(\mathcal{O})}\\  \label{eq:rel-conv-pBig-02}
&\times\left( \kappa^{p/2} \abs{\mathcal{O}} + \norm{A}_{L^p(\mathcal{O})}^p + \norm{B}_{L^p(\mathcal{O})}^p \right)^{\frac{2-p'}{2p'}};
\end{align}
\end{subequations}
\item if $p \in (1,2]$, then 
\begin{subequations}
\begin{align} \label{eq:rel-conv-pSmall-01}
\norm{S(A) - S(B)}_{L^{p'}(\mathcal{O})}^{p'} \lesssim{}& \norm{V(A) - V(B)}_{L^2(\mathcal{O})}^2,\\ \nonumber
\norm{A-B}_{L^p(\mathcal{O})} \lesssim{}& \norm{V(A) - V(B)}_{L^2(\mathcal{O})} \\ \label{eq:rel-conv-pSmall-02}
&\times\left( \kappa^{p/2} \abs{\mathcal{O}} + \norm{A}_{L^p(\mathcal{O})}^p + \norm{B}_{L^p(\mathcal{O})}^p \right)^{\frac{2-p}{2p}}.
\end{align}
\end{subequations}
\end{itemize}
\end{lemma}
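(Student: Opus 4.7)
The plan is to lift well-known pointwise equivalences for $S$ and $V$ (see, e.g., \cite{DieE08,MR4091593}) to the integral level and then apply H\"older's inequality on the resulting weighted products. The basic pointwise facts that drive the proof are, uniformly in $A,B\in\mathbb{R}^{n\times n}$,
\begin{align*}
    \abs{V(A)-V(B)}^2 &\eqsim (\kappa+\abs{A}^2+\abs{B}^2)^{(p-2)/2}\abs{A-B}^2,\\
    \abs{S(A)-S(B)}\,\abs{A-B} &\eqsim \abs{V(A)-V(B)}^2,
\end{align*}
together with the trivial estimate $\kappa+\abs{A}^2+\abs{B}^2\gtrsim\abs{A-B}^2$.

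For \eqref{eq:rel-conv-pBig-01} ($p\geq 2$), the weight is non-negative, so combining the first pointwise equivalence with $\kappa+\abs{A}^2+\abs{B}^2\gtrsim\abs{A-B}^2$ yields $\abs{V(A)-V(B)}^2\gtrsim\abs{A-B}^p$ pointwise. Integration immediately gives the claim. For \eqref{eq:rel-conv-pBig-02}, I would rewrite $\abs{S(A)-S(B)}\lesssim (\kappa+\abs{A}^2+\abs{B}^2)^{(p-2)/4}\abs{V(A)-V(B)}$ (a direct consequence of the two pointwise relations), raise to the power $p'$, and apply H\"older's inequality with conjugate exponents $2/p'$ and $2/(2-p')$ -- both meaningful since $p'\in[1,2]$ when $p\geq 2$. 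The crucial exponent identity $\frac{(p-2)p'}{4}\cdot\frac{2}{2-p'}=\frac{p}{2}$ (which follows from $p'(p-1)=p$) makes the weight integrable as the $p/2$-power of $\kappa+\abs{A}^2+\abs{B}^2$, producing exactly the prefactor in \eqref{eq:rel-conv-pBig-02}.

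For $p\in(1,2]$, the weight has a non-positive exponent and the calculation is done in the opposite direction. For \eqref{eq:rel-conv-pSmall-01}, combining $\abs{S(A)-S(B)}\lesssim \abs{V(A)-V(B)}^2/\abs{A-B}$ with the expression of $\abs{V(A)-V(B)}^{2(p'-1)}$ in terms of the weight, and using the same identity $(p-2)(p'-1)=2-p'$, yields after simplification
\begin{align*}
    \abs{S(A)-S(B)}^{p'} \lesssim \abs{V(A)-V(B)}^2 \Bigl(\frac{\kappa+\abs{A}^2+\abs{B}^2}{\abs{A-B}^2}\Bigr)^{(2-p')/2};
\end{align*}
since the ratio is bounded from below by a universal constant and the exponent $(2-p')/2$ is non-positive, the second factor is uniformly bounded and integration gives \eqref{eq:rel-conv-pSmall-01}. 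For \eqref{eq:rel-conv-pSmall-02}, I would start from $\abs{A-B}^p\eqsim(\kappa+\abs{A}^2+\abs{B}^2)^{p(2-p)/4}\abs{V(A)-V(B)}^p$, integrate, and apply H\"older with exponents $2/p$ and $2/(2-p)$; the matching identity $\frac{p(2-p)}{4}\cdot\frac{2}{2-p}=\frac{p}{2}$ again makes the weight integrable and delivers the prefactor with the correct exponent $(2-p)/(2p)$.

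I do not expect any serious obstacle beyond bookkeeping: the only real subtlety is choosing the correct H\"older pairing in each regime so that the weight $\kappa+\abs{A}^2+\abs{B}^2$ appears precisely to the power $p/2$ after integration. The identity $pp'=p+p'$ and its corollary $(p-2)(p'-1)=2-p'$ are the algebraic engine that makes all four exponent computations align with the statement.
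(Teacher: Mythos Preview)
Your proposal is correct and follows essentially the same route as the paper: start from the pointwise equivalences for $|V(A)-V(B)|^2$ and $|S(A)-S(B)|$ in terms of a weight of the form $(\kappa+|A|^2+|B|^2)^{(p-2)/2}$ (the paper uses the equivalent weight $(\sqrt\kappa+|A|+|A-B|)^{p-2}$), deduce the two ``pure'' inequalities pointwise, and obtain the two ``impure'' ones by H\"older with the exponent pairs $(2/p',2/(2-p'))$ and $(2/p,2/(2-p))$, driven by the same identity $pp'=p+p'$. The only cosmetic difference is that the paper cites \cite[Lemma~4.1]{Berselli2008} for \eqref{eq:rel-conv-pSmall-02}, whereas you carry out the H\"older computation explicitly.
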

\begin{proof}
We start by observing that 
\begin{align} \label{rep:V}
    \abs{V(A) - V(B)}^2 & \eqsim (\sqrt{\kappa} + \abs{A} + \abs{A-B})^{p-2} \abs{A-B}^2, \\ \label{rep:S}
    \abs{S(A) - S(B)} & \eqsim (\sqrt{\kappa} + \abs{A} + \abs{A-B})^{p-2} \abs{A-B},
\end{align}
which both follow from a similar argumentation as for the verification of~\cite[Lemma~3.1]{Berselli2008}.

Firstly, we discuss the pure increment inequalities: Inequalities~\eqref{eq:rel-conv-pBig-01} and~\eqref{eq:rel-conv-pSmall-01}. If $p \geq 2$, then clearly
\begin{align*}
    \abs{A-B}^p \leq (\sqrt{\kappa} + \abs{A} + \abs{A-B})^{p-2} \abs{A-B}^2 \eqsim \abs{V(A) - V(B)}^2.
\end{align*}
If $p \in (1,2)$, then, using $(p-2) + (2-p')= (p-2)p'$,
\begin{align*}
    \abs{S(A) - S(B)}^{p'} &\eqsim   (\sqrt{\kappa} + \abs{A} + \abs{A-B})^{(p-2)p'} \abs{A-B}^{p'} \\
    &=   (\sqrt{\kappa} + \abs{A} + \abs{A-B})^{p-2} \abs{A-B}^{2} \left( \frac{\abs{A-B}}{ \sqrt{\kappa} + \abs{A} + \abs{A-B}} \right)^{p'-2} \\
    &\leq (\sqrt{\kappa} + \abs{A} + \abs{A-B})^{p-2} \abs{A-B}^{2} \eqsim \abs{V(A) - V(B)}^2.
\end{align*}
Integration in space and an application of the above inequalities establish~\eqref{eq:rel-conv-pBig-01} and~\eqref{eq:rel-conv-pSmall-01}.

Secondly, we address the impure increment inequalities: Inequalities~\eqref{eq:rel-conv-pBig-02} and~\eqref{eq:rel-conv-pSmall-02}. Both follow from H\"older's inequality and the representations for $V$ and $S$ (see~\eqref{rep:V} and~\eqref{rep:S}, respectively). If $p \in (1,2)$, then Inequality~\eqref{eq:rel-conv-pSmall-02} is derived in~\cite[Lemma~4.1]{Berselli2008}. If $p \geq 2$, then, using $(p-2)p' = (p'-2)p$ and H\"older's inequality,
\begin{align*}
    &\norm{S(A) - S(B)}_{L^{p'}}^{p'} \eqsim \int \left( \sqrt{\kappa} + \abs{A} + \abs{A-B} \right)^{(p-2) p'} \abs{A-B}^{p'} \dd x \\
    &\qquad =  \int \left( \sqrt{\kappa} + \abs{A} + \abs{A-B} \right)^{\frac{(p-2) p'}{2}} \abs{A-B}^{p'} \left( \sqrt{\kappa} + \abs{A} + \abs{A-B} \right)^{\frac{(p'-2) p}{2}} \dd x \\
    &\qquad\leq \left( \int \left( \sqrt{\kappa} + \abs{A} + \abs{A-B} \right)^{p-2} \abs{A-B}^2 \dd x \right)^{\frac{p'}{2}} \\
    &\hspace{8em} \times \left( \int \left( \sqrt{\kappa} + \abs{A} + \abs{A-B} \right)^{p} \dd x\right)^{\frac{2-p'}{2}}.
\end{align*}
Taking the $p'$-th root, estimating $( \sqrt{\kappa} + \abs{A} + \abs{A-B})^{p} \lesssim \kappa^{p/2}+ \abs{A}^p + \abs{B}^p$, and applying the representation of $V$ establish the claim.  
\end{proof}

\begin{proof}[Proof of Theorem~\ref{thm:unique}]
    Let us assume that there are two invariant measures~$\mu$ and $\overline{\mu}$ with respect to the semigroup~$\mathcal{P}_{\tau,\disc}$. We will show that, for any Lipschitz-continuous $f \in C_b\big( \mathbb{L}^2(\mathcal{O}) \big)$:
\begin{align*}
    \abs{\int_{\mathbb{L}^2(\mathcal{O})} f(x)\mu(\mathrm{d}x) - \int_{\mathbb{L}^2(\mathcal{O})} f(x)\overline{\mu}(\mathrm{d}x) } = 0,
\end{align*}
which will imply $\mu = \overline{\mu}$.

Let $f \in C_b\big( \mathbb{L}^2(\mathcal{O}) \big)$ be Lipschitz-continuous and $n \in \mathbb{N}$. Since $\mu$ and $\overline{\mu}$ are $\mathcal{P}_{\tau,\disc}$-invariant, we find that
\begin{align*}
    &\int_{\mathbb{L}^2(\mathcal{O})} f(u)\mu(\mathrm{d}u) - \int_{\mathbb{L}^2(\mathcal{O})} f(v)\overline{\mu}(\mathrm{d}v) \\
    &\qquad = \frac{1}{n} \sum_{\ell=1}^{n}  \left[ \int_{\mathbb{L}^2(\mathcal{O})} \mathcal{P}_{\tau,\disc}^\ell f(u)\mu(\mathrm{d}u) - \int_{\mathbb{L}^2(\mathcal{O})}  
 \mathcal{P}_{\tau,\disc}^\ell f(v)\overline{\mu}(\mathrm{d}v) \right] \\
    &\qquad = \int_{\mathbb{L}^2(\mathcal{O})} \int_{\mathbb{L}^2(\mathcal{O})} \frac{1}{n} \sum_{\ell=1}^{n} \mathbb{E} \left[ f(\Pi_\disc \mathfrak{S}(n,u) ) - f(\Pi_\disc \mathfrak{S}(n,v) ) \right] \mu(\mathrm{d}u) \overline{\mu}(\mathrm{d}v).
\end{align*}
Recalling that $f$ is bounded, by dominated convergence theorem the right-hand side above will converge to $0$ provided we show that, for any $u,v \in L^2(\mathcal{O})$:
\begin{align*}
    \lim_{n\to \infty} \frac{1}{n} \sum_{\ell=1}^{n}  \mathbb{E} \left[ f(\Pi_\disc \mathfrak{S}(\ell,u) ) - f(\Pi_\disc \mathfrak{S}(\ell,v) ) \right] = 0.
\end{align*}
Since $f$ is Lipschitz-continuous, it is sufficient to verify:
\begin{align} \label{eq:toShow}
    \lim_{n\to \infty}  \frac{1}{n} \sum_{\ell=1}^{n} \mathbb{E}  \left[ \norm{\Pi_\disc \mathfrak{S}(\ell,u) - \Pi_\disc \mathfrak{S}(\ell,v) }_{L^2(\mathcal{O})} \right] = 0.
\end{align}
 Notice that the difference of velocity satisfies~$\mathbb{P}$-a.s.:
\begin{align}
\begin{aligned} \label{eq:sol-dif-unique}
    &\norm{\Pi_\disc \mathfrak{S}(n,u) -\Pi_\disc \mathfrak{S}(n,v)}_{L^2(\mathcal{O})}^2 \\
    &\qquad + \sum_{\ell = 0}^{n-1} \tau \norm{ V(\varepsilon_\disc \mathfrak{S}^{1/2}(\ell,u) + \varepsilon g) - V(\varepsilon_\disc \mathfrak{S}^{1/2}(\ell,v) + \varepsilon g)}_{L^2(\mathcal{O})}^2 \\
    &\eqsim  \norm{\Pi_\disc \mathcal{K}_\disc u -\Pi_\disc \mathcal{K}_\disc v }_{L^2(\mathcal{O})}^2.
\end{aligned}
\end{align}
This equivalence is derived from subtracting~\eqref{eq:Evolution-01} for $ \mathfrak{S}(\ell,u)$ and $ \mathfrak{S}(\ell,v)$, respectively, choosing $\xi = \mathfrak{S}^{1/2}(\ell,u) - \mathfrak{S}^{1/2}(\ell,u)$ as the test function, and a similar reasoning as in the proof of Theorem~\ref{thm:apriori-vel} to cancel the noise term. 

The right-hand-side of~\eqref{eq:sol-dif-unique} is independent of the time horizon~$n$. Therefore, after taking expectations and normalisation, we infer that
\begin{align} \label{eq:conv-V}
   \lim_{n \to \infty} \frac{1}{n} \sum_{\ell=0}^{n-1} \mathbb{E}\left[ \norm{ V(\varepsilon_\disc \mathfrak{S}^{1/2}(\ell,u) + \varepsilon g) - V(\varepsilon_\disc \mathfrak{S}^{1/2}(\ell,v) + \varepsilon g)}_{L^2(\mathcal{O})}^2 \right] = 0.
\end{align}
By invoking Lemma~\ref{lem:Rel-Convergence-Tensors}, we can transfer $L^2$-convergence of $V$ to $L^p$-convergence of the identity. To do this, we need to distinguish the cases: $p \geq 2$ and $p \in (1,2)$.

Firstly, let $p \geq 2$. Introducing $\pm\varepsilon g$ and utilising Inequality~\eqref{eq:rel-conv-pBig-01}, we find that
\begin{align*}
   &\frac{1}{n} \sum_{\ell=0}^{n-1}\mathbb{E}\left[ \norm{ \varepsilon_\disc \mathfrak{S}^{1/2}(\ell,u) - \varepsilon_\disc \mathfrak{S}^{1/2}(\ell,v)}_{L^p(\mathcal{O})}^p \right]  \\
   &\qquad \lesssim \frac{1}{n} \sum_{\ell=0}^{n-1} \mathbb{E}\left[ \norm{ V(\varepsilon_\disc \mathfrak{S}^{1/2}(\ell,u) + \varepsilon g) - V(\varepsilon_\disc \mathfrak{S}^{1/2}(\ell,v) + \varepsilon g)}_{L^2(\mathcal{O})}^2 \right],
\end{align*}
where the right-hand-side vanishes asymptotically thanks to~\eqref{eq:conv-V}.

Secondly, let $p \in (1,2)$. Here, we introduce $\pm\varepsilon g$ and use Inequality~\eqref{eq:rel-conv-pSmall-02} together with H\"older's inequality to deduce
\begin{align*}
    &\frac{1}{n} \sum_{\ell=0}^{n-1} \mathbb{E}\left[ \norm{ \varepsilon_\disc \mathfrak{S}^{1/2}(\ell,u) - \varepsilon_\disc \mathfrak{S}^{1/2}(\ell,v)}_{L^p(\mathcal{O})}^p \right] \\
    &\quad \lesssim \left(\frac{1}{n} \sum_{\ell=0}^{n-1} \mathbb{E}\left[ \norm{ V(\varepsilon_\disc \mathfrak{S}^{1/2}(\ell,u) + \varepsilon g) - V(\varepsilon_\disc \mathfrak{S}^{1/2}(\ell,v) + \varepsilon g)}_{L^2(\mathcal{O})}^2 \right] \right)^{\frac{p}{2}} \\
    &\qquad \times
    \left( \frac{1}{n} \sum_{\ell=0}^{n-1} \mathbb{E}\left[ \norm{\varepsilon_\disc \mathfrak{S}^{1/2}(\ell,u) +\varepsilon g}_{L^p(\mathcal{O})}^p + \norm{\varepsilon_\disc \mathfrak{S}^{1/2}(\ell,v) +\varepsilon g}_{L^p(\mathcal{O})}^p  + 1 \right] \right)^{\frac{2-p}{2}}.
\end{align*}
 While~\eqref{eq:conv-V} ensures the convergence to $0$ of the first factor, Inequality~\eqref{eq:apriori-vel} guarantees that the second factor is uniformly bounded. This implies the convergence to $0$ of the right-hand-side. 

Combining both cases, we have shown that
\begin{align} \label{eq:sym-limit-half}
    \lim_{n \to \infty}  \frac{1}{n} \sum_{\ell=0}^{n-1} \mathbb{E}\left[ \norm{ \varepsilon_\disc \mathfrak{S}^{1/2}(\ell,u) - \varepsilon_\disc \mathfrak{S}^{1/2}(\ell,v)}_{L^p(\mathcal{O})}^p \right] = 0,
\end{align}
which, together with an inverse estimate (see Equation~\eqref{eq:inverse-first}), implies:
\begin{align} \label{eq:limit-half}
    \lim_{n \to \infty} \frac{1}{n} \sum_{\ell=0}^{n-1} \mathbb{E}\left[ \norm{ \Pi_\disc \mathfrak{S}^{1/2}(\ell,u) - \Pi_\disc \mathfrak{S}^{1/2}(\ell,v)}_{L^2(\mathcal{O})}^p \right] = 0.
\end{align}
This establishes the convergence of the time-averaged arithmetic mean-values.

Next, we verify the convergence of the time-averaged difference at integer times. Let us denote the differences at integer and shifted integer times by: 
\begin{align*}
    \delta^{n} := \mathfrak{S}(n,u) - \mathfrak{S}(n,v) \qquad \text{ and }  \qquad \delta^{n+1/2} := \mathfrak{S}^{1/2}(n,u) - \mathfrak{S}^{1/2}(n,v).
\end{align*} 

Notice that the difference at integer times satisfies the equation, $\mathbb{P}$-a.s.~for all $\xi \in E_{\disc,0}$:
\begin{align*}
    &\left( \Pi_\disc \delta^{n+1} - \Pi_\disc \delta^{n}, \Pi_\disc \xi \right)\\
    &\qquad = -\tau \left(S(\varepsilon_\disc \mathfrak{S}^{1/2}(n,u) + \varepsilon g) - S(\varepsilon_\disc \mathfrak{S}^{1/2}(n,v) + \varepsilon g), \varepsilon_\disc \xi  \right) \\
    &\qquad \qquad + B_\disc^\sigma\left(\delta^{n+1/2},\xi \right) \Delta_{n+1} W.  
\end{align*}
Choosing $\xi = \delta^{n+1} - \delta^n \in E_{\disc,0}$ yields
\begin{align}
\begin{aligned} \label{eq:pre-closing}
    &\norm{ \Pi_\disc \delta^{n+1} - \Pi_\disc \delta^{n}}_{L^2(\mathcal{O})}^2 \\
    &\qquad = -\tau \left(S(\varepsilon_\disc \mathfrak{S}^{1/2}(n,u) + \varepsilon g) - S(\varepsilon_\disc \mathfrak{S}^{1/2}(n,v) + \varepsilon g), \varepsilon_\disc ( \delta^{n+1} - \delta^n)  \right) \\
    &\qquad \qquad + B_\disc^\sigma\left(\delta^{n+1/2},\delta^{n+1} - \delta^n\right) \Delta_{n+1} W.
\end{aligned}
\end{align}
We will discuss the terms on the right-hand-side separately. 

An application of H\"older's inequality and weighted Young's inequality, together with an inverse estimate (see Equation~\eqref{eq:inverse-first}) show, for arbitrary $\delta >0$, 
\begin{align*}
     &\tau \left(S(\varepsilon_\disc \mathfrak{S}^{1/2}(n,u) + \varepsilon g) - S(\varepsilon_\disc \mathfrak{S}^{1/2}(n,v) + \varepsilon g), \varepsilon_\disc ( \delta^{n+1} - \delta^n)  \right) \\
&\qquad \leq c_\delta \tau^2 \norm{S(\varepsilon_\disc \mathfrak{S}^{1/2}(n,u) + \varepsilon g) - S(\varepsilon_\disc \mathfrak{S}^{1/2}(n,v) + \varepsilon g)}_{L^{p'}(\mathcal{O})}^{2} \\
&\qquad \qquad+ \delta  \big(\mathfrak{B}_\disc(p) \big)^2  \norm{\Pi_\disc ( \delta^{n+1} - \delta^n) }_{L^{2}(\mathcal{O})}^2. 
\end{align*}
The first term will vanish in expectation asymptotically; the second term can be absorbed by choosing~$\delta $ sufficiently small. 

Next, we study the stochastic term. Recalling the definition of the noise coefficient (see~\eqref{def:discrete-noise-coefficient}) and applying H\"older's inequality and weighted Young's inequality we find, for arbitrary $\delta > 0$, 
\begin{align*}
    &B_\disc^\sigma\left(\delta^{n+1/2},\delta^{n+1} - \delta^n\right) \Delta_{n+1} W \\
    &\quad \leq c_\delta \norm{\sigma}_{L^\infty(\mathcal{O})}^2 \left( \norm{\nabla_\disc \delta^{n+1/2}}_{L^2(\mathcal{O})}^2 + \norm{\Pi_\disc \delta^{n+1/2} }_{L^2(\mathcal{O})}^2 \right)\abs{\Delta_{n+1} W}^2 \\
    &\qquad+ \delta \left( \norm{\Pi_\disc (\delta^{n+1} - \delta^n)  }_{L^2(\mathcal{O})}^2  + \norm{\nabla_\disc (\delta^{n+1} - \delta^n)  }_{L^2(\mathcal{O})}^2 \right). 
\end{align*}
Utilising the coercivity constant (see~\eqref{def:discrete-poincare}) and an inverse estimate, we further estimate
\begin{align*}
    &B_\disc^\sigma\left(\delta^{n+1/2},\delta^{n+1} - \delta^n\right) \Delta_{n+1} W \\
    &\quad \leq c_\delta \norm{\sigma}_{L^\infty(\mathcal{O})}^2 \big( C_\disc(2)\big)^2 \norm{\varepsilon_\disc \delta^{n+1/2}}_{L^2(\mathcal{O})}^2 \abs{\Delta_{n+1} W}^2 \\
    &\qquad + \delta \left( 1 + \big( C_\disc(2) \mathfrak{B}_\disc(2) \big)^2 \right)  \norm{\Pi_\disc (\delta^{n+1} - \delta^n)  }_{L^2(\mathcal{O})}^2.
\end{align*}
Again, by choosing~$\delta$ sufficiently small, we can absorb the second term, while the first term vanishes in expectation asymptotically.

Now, we are ready to use the above estimates in~\eqref{eq:pre-closing}. But first, we fix~$\delta >0$ sufficiently small. Then we apply them to find
\begin{align*}
   &\norm{ \Pi_\disc \delta^{n+1} - \Pi_\disc \delta^{n}}_{L^2(\mathcal{O})}^2 \\
   &\qquad \lesssim \norm{S(\varepsilon_\disc \mathfrak{S}^{1/2}(n,u) + \varepsilon g) - S(\varepsilon_\disc \mathfrak{S}^{1/2}(n,v) + \varepsilon g)}_{L^{p'}(\mathcal{O})}^{2} \\
   &\qquad \qquad +  \norm{\varepsilon_\disc \delta^{n+1/2}}_{L^2(\mathcal{O})}^2 \abs{\Delta_{n+1} W}^2.
\end{align*}
Taking the square root, expectation and time-average, followed by an invokation of H\"older's inequality and the equivalence of norms in finite dimensions,
\begin{align*}
    & \frac{1}{n} \sum_{\ell=0}^{n-1} \mathbb{E}\left[ \norm{ \Pi_\disc \delta^{\ell+1} - \Pi_\disc \delta^{\ell}}_{L^2(\mathcal{O})} \right] \\
    &\qquad \lesssim \left(  \frac{1}{n} \sum_{\ell=0}^{n-1} \mathbb{E}\left[\norm{S(\varepsilon_\disc \mathfrak{S}^{1/2}(\ell,u) + \varepsilon g) - S(\varepsilon_\disc \mathfrak{S}^{1/2}(\ell,v) + \varepsilon g)}_{L^{p'}(\mathcal{O})}^{p'} \right] \right)^{1/p'} \\
    &\qquad \qquad +\left( \frac{1}{n} \sum_{\ell=0}^{n-1} \mathbb{E}\left[\norm{\varepsilon_\disc \delta^{\ell+1/2}}_{L^p(\mathcal{O})}^p  \right] \right)^{1/p} \left( \frac{1}{n} \sum_{\ell=0}^{n-1} \mathbb{E}\left[ \abs{\Delta_{\ell+1} W}^{p'} \right] \right)^{1/p'}.
\end{align*}
Notice that the first term vanishes asymptotically, which follows from an analogous argumentation as for the Identity~\eqref{eq:sym-limit-half}. Additionally, Identity~\eqref{eq:sym-limit-half} also implies the convergence to $0$ of the second term since Wiener increments have arbitrarily high moments, i.e., 
\begin{align*}
   \sup_{n \in \mathbb{N}}  \left( \frac{1}{n} \sum_{\ell=0}^{n-1} \mathbb{E}\left[ \abs{\Delta_{\ell+1} W}^{p'} \right] \right)^{1/p'} \lesssim 1.
\end{align*}
Thus, we have established that
\begin{align} \label{eq:dif-vanish}
   \lim_{n\to \infty} \frac{1}{n} \sum_{\ell=0}^{n-1} \mathbb{E}\left[ \norm{ \Pi_\disc \delta^{\ell+1} - \Pi_\disc \delta^{\ell}}_{L^2(\mathcal{O})} \right] = 0.
\end{align}

The convergence of the time-averaged arithmetic mean-values of velocity and velocity increments is enough to prove the convergence of velocity at integer times since: $\delta^{n+1} = \delta^{n+1/2}  + \tfrac{\delta^{n+1} - \delta^{n}}{2}$. Indeed, using~\eqref{eq:limit-half} and~\eqref{eq:dif-vanish}, we conclude that
\begin{align*}
    \lim_{n\to \infty} \frac{1}{n} \sum_{\ell=1}^{n}  \mathbb{E}\left[ \norm{\Pi_\disc \delta^{\ell}}_{L^2(\mathcal{O})} \right]  &\leq \lim_{n\to \infty}\frac{1}{n} \sum_{\ell=0}^{n-1}  \mathbb{E}\left[ \norm{\Pi_\disc \delta^{\ell+1/2}}_{L^2(\mathcal{O})} \right] \\
    &\qquad + \lim_{n\to \infty} \frac{1}{n} \sum_{\ell=0}^{n-1} 
 \mathbb{E}\left[ \norm{\Pi_\disc (\delta^{\ell+1} - \delta^{\ell})}_{L^2(\mathcal{O})} \right] = 0.
\end{align*}
Recalling that $\delta^{\ell} := \mathfrak{S}(\ell,u) - \mathfrak{S}(\ell,v)$, this shows \eqref{eq:toShow} and completes the proof.
\end{proof}

\subsubsection{Asymptotic invariance of the first sequence of measures}
\begin{proof}[Proof of Lemma~\ref{lem:rel-1st-measure-semigroup}]
As in the proof of Lemma~\ref{lem:semigroup}, one can show that  
\begin{align*}
   \mathbb{P}\left(\Pi_\disc \mathfrak{S}(k + m, v^\mathrm{in}) \in  A \right) =   \int_{\mathbb{L}^2(\mathcal{O})}  \mathbb{P}\left(\Pi_\disc \mathfrak{S}(k, v) \in  A \right) \mathbb{P} \left( \Pi_\disc \mathfrak{S}(m, v^\mathrm{in}) \in \mathrm{d} v \right),
\end{align*}
for any $A \in \mathcal{B}\big( \mathbb{L}^2(\mathcal{O}) \big)$, $k$ and $m \in \mathbb{N}$. 

Now, let $n$ and $N \in \mathbb{N}$ be the time index of the semigroup and the sequence index of the measure, respectively. Using the definition of the semigroup and the measure (see Definition~\ref{def:discrete-trans-group} and~\eqref{def:aprox-measure}, respectively), together with the above identity, we find
\begin{align*}
    &\int_{\mathbb{L}^2(\mathcal{O})} (P_{\tau,\disc}^n f)(v) \mu_{\tau,\disc}^N (v^\mathrm{in}; \mathrm{d} v) \\
    &\hspace{2em} = \frac{1}{N} \sum_{\ell=0}^{N-1}  \int_{\mathbb{L}^2(\mathcal{O})} \left[ \int_{\mathbb{L}^2(\mathcal{O})} f(z) \mathbb{P}\left(\Pi_\disc \mathfrak{S}(n, v) \in  \mathrm{d} z \right) \right] \mathbb{P} \left( \Pi_\disc \mathfrak{S}(\ell, v^\mathrm{in}) \in \mathrm{d} v \right) \\
    &\hspace{2em} = \frac{1}{N} \sum_{\ell=0}^{N-1}  \int_{\mathbb{L}^2(\mathcal{O})} f(v) \mathbb{P} \left( \Pi_\disc \mathfrak{S}(n+\ell, v^\mathrm{in}) \in \mathrm{d} v \right).
\end{align*}
An index shift shows
\begin{align*}
    &\frac{1}{N} \sum_{\ell=0}^{N-1}  \int_{\mathbb{L}^2(\mathcal{O})} f(v) \mathbb{P} \left( \Pi_\disc \mathfrak{S}(n+\ell, v^\mathrm{in}) \in \mathrm{d} v \right) \\
    & \hspace{2em}=  \frac{1}{N}  \sum_{\ell=0}^{N-1}  \int_{\mathbb{L}^2(\mathcal{O})} f(v) \mathbb{P} \left( \Pi_\disc \mathfrak{S}(\ell, v^\mathrm{in}) \in \mathrm{d} v \right) \\
    &\hspace{4em}  +  \frac{1}{N} \sum_{\ell=0}^{n-1}  \int_{\mathbb{L}^2(\mathcal{O})} f(v) \mathbb{P} \left( \Pi_\disc \mathfrak{S}(N+\ell, v^\mathrm{in}) \in \mathrm{d} v \right)\\
     &\hspace{4em}-  \frac{1}{N} \sum_{\ell=0}^{n-1} \int_{\mathbb{L}^2(\mathcal{O})} f(v) \mathbb{P} \left( \Pi_\disc \mathfrak{S}(\ell, v^\mathrm{in}) \in \mathrm{d} v \right).
\end{align*}
Thus, 
\begin{align}
    &\Bigg|\int_{\mathbb{L}^2(\mathcal{O})} (P_{\tau,\disc}^n f)(v) \mu_{\tau,\disc}^N (v^\mathrm{in}; \mathrm{d} v)  - \int_{\mathbb{L}^2(\mathcal{O})} f(v) \mu_{\tau,\disc}^{N} (v^\mathrm{in}; \mathrm{d} v) \Bigg|\nonumber\\
    &\hspace{2em}= \Bigg|\frac{1}{N} \sum_{\ell=0}^{n-1}  \int_{\mathbb{L}^2(\mathcal{O})} f(v) \mathbb{P} \left( \Pi_\disc \mathfrak{S}(N+\ell, v^\mathrm{in}) \in \mathrm{d} v \right) \nonumber\\
     &\hspace{4em}-  \frac{1}{N} \sum_{\ell=0}^{n-1} \int_{\mathbb{L}^2(\mathcal{O})} f(v) \mathbb{P} \left( \Pi_\disc \mathfrak{S}(\ell, v^\mathrm{in}) \in \mathrm{d} v \right) \Bigg|\nonumber\\
     &\hspace{2em} \leq  2 \sup_{v \in \mathbb{L}^2(\mathcal{O})}\abs{f(v)}~ \frac{n}{N}.
\label{eq:est.Pf.minus.f}
\end{align}
This finishes the proof of Lemma~\ref{lem:rel-1st-measure-semigroup}.
\end{proof}

\subsubsection{Existence of a limit measure for the second sequence of measures}
\begin{proof}[Proof of Lemma~\ref{lem:limit-measure}]
By the Prokhorov theorem, it is sufficient to verify that the sequence of probability measures $\big( \mu_{\tau,\disc}^{1/2,N}( v^\mathrm{in}; \cdot) \big)_{N \in \mathbb{N}}$, defined in~\eqref{def:aprox-measure-shift} is tight. Thus, let $\eta > 0$. We need to find a compact set~$\mathscr{K}_\eta \subset \mathbb{L}^2(\mathcal{O})$ such that 
\begin{align}\label{eq:tight-show}
    \forall N \in \mathbb{N}: \quad \mu_{\tau}^{1/2,N}( v^\mathrm{in};  \mathscr{K}_\eta) > 1- \eta.
\end{align}

Let $R \in [0,\infty)$ and define 
\begin{align*}
    D_R := \big\{ u \in \mathbb{L}^2(\mathcal{O}) \big|\, \exists v_\disc \in X_{\disc,0}:~u = \Pi_\disc v_\disc~\text{ and } \norm{\varepsilon_\disc v_\disc}_{L^p(\mathcal{O})} \leq R \big\}.
\end{align*}
The set~$D_R$ contains all vector fields that can be represented by a reconstructed discrete velocity with an additional, quantified control over the reconstructed symmetric gradient. Since $D_R\subset\Pi_\disc X_{\disc,0}$ which is finite-dimensional, the bounded set $D_R$ is compact.

The space~$\mathbb{L}^2(\mathcal{O})$ splits into three disjoint subsets: 
\begin{align*}
    \mathbb{L}^2(\mathcal{O}) = \mathbb{L}^2(\mathcal{O}) \backslash \Pi_\disc X_{\disc,0}~ \cup ~ \Pi_\disc X_{\disc,0} \backslash D_R ~\cup~  D_R,
\end{align*}
where the first, second, and third set correspond to vector fields that cannot be represented within the GD, vector fields that can be represented within the GD but lack control over the reconstructed symmetric gradient, and vector fields that can be represented within the GD with control over the reconstructed symmetric gradient, respectively. 

Next, we will show that, by adjusting~$R$ as a function of~$\eta$, the following inequality can be guaranteed:
\begin{align*}
   \forall N \in \mathbb{N}:\quad  \mu_{\tau,\disc}^{1/2,N}\big(v^\mathrm{in}; D_{R(\eta)} \big) > 1 - \eta. 
\end{align*}
Thus, Inequality~\eqref{eq:tight-show} will follow with $\mathscr{K}_\eta = D_{R(\eta)}$ and the proof will be complete.

The definition \eqref{def:aprox-measure} of $\mu_{\tau,\disc}^{1/2,N}$ shows that this measure is supported in $\Pi_\disc X_{\disc,0}$. Hence, it is sufficient to show that 
there exists $R(\eta)$ such that for all $N \in \mathbb{N}$,
\begin{align} \label{eq:tight-converse}
    \mu_{\tau,\disc}^{1/2,N}(v^\mathrm{in}; \Pi_\disc X_{\disc,0} \backslash D_{R(\eta)}  \big) < \eta.
\end{align}

Due to the Tschebycheff inequality and Theorem~\ref{thm:apriori-vel} (more precisely Inequality~\eqref{eq:apriori-vel} with~$q=1$, and recalling $\mathfrak{S}^{1/2}(n,v^\mathrm{in}) = v^{n+1/2}_\disc$ ) we find that
\begin{align*}
\mu_{\tau,\disc}^{1/2,N}(v^\mathrm{in}; \Pi_\disc X_{\disc,0} \backslash D_{R}  ) &=  \frac{1}{N} \sum_{n = 0}^{N-1} \mathbb{P}\left(\Pi_\disc \mathfrak{S}^{1/2}(n,v^\mathrm{in}) \in  \Pi_\disc X_{\disc,0} \backslash D_{R} \right)\\
&\leq  R^{-p} \frac{1}{\tau N} \mathbb{E}\left[ \sum_{n = 0}^{N-1}  \tau \norm{\varepsilon_\disc \mathfrak{S}^{1/2}(n,v^\mathrm{in}) }_{L^p(\mathcal{O})}^p \right]\\
&\leq  R^{-p} \frac{1}{\tau N} C \left( \norm{v^\mathrm{in}}_{L^2(\mathcal{O})}^{2} + \tau N \right).
\end{align*}
Since
\begin{align*}
    \sup_{N \in \mathbb{N}} \frac{1}{\tau N} C \left( \norm{v^\mathrm{in}}_{L^2(\mathcal{O})}^{2} + \tau N \right) \leq C \left( \frac{\norm{v^\mathrm{in}}_{L^2(\mathcal{O})}^{2}}{\tau} + 1 \right) < \infty, 
\end{align*}
there exists~$R(\eta)$ (independent of $N$) such that Inequality~\eqref{eq:tight-converse} holds uniformly in $N$. This finishes the proof of Lemma~\ref{lem:limit-measure}. 
\end{proof}

\subsubsection{Mismatch of invariance for the second sequence of measures}
\begin{proof}[Proof of Lemma~\ref{lem:mismatch-invariance}]
Let $f \in C_b\big(\mathbb{L}^2(\mathcal{O}) \big)$ be Lipschitz-continuous. Moreover, let $n$ and $N \in \mathbb{N}$ be the time index of the semigroup and the sequence index of the measure, respectively. Our proof strategy bases on the following decomposition:
\begin{align*}
    &\langle \mathcal{P}^n_{\tau,\disc} f - f, \mu^{1/2,N}_{\tau,\disc} \rangle =  \underbrace{\langle \mathcal{P}^n_{\tau,\disc} f - f,  \mu^{N}_{\tau,\disc}  \rangle}_{=:~\mathrm{I}} +  \underbrace{\langle  \mathcal{P}^n_{\tau,\disc} f - f,  \mu^{1/2,N}_{\tau,\disc}  -  \mu^{N}_{\tau,\disc}   \rangle}_{=:~\mathrm{II}},
\end{align*}
where we neglected the dependence of the measures on the input-velocity, and $\langle f, \mu \rangle = \int_{\mathbb{L}^2(\mathcal{O})} f(v) \mu(\mathrm{d} v)$ is an abbreviation for the pairing of a function and a measure. 

Before we address each term separately, we will derive a preparatory result that concerns propagation of Lipschitz-continuity for the semigroup. We will show the following inequality: 
\begin{align}  \label{eq:lip-propagation-1}
     \seminorm{  \mathcal{P}_{\tau,\disc}^n f}_{\mathrm{Lip}} &\leq \seminorm{ f}_{\mathrm{Lip}},
\end{align}
where $\seminorm{ f}_{\mathrm{Lip}} := \sup_{u\neq v \in \mathbb{L}^2(\mathcal{O})} \frac{\abs{f(u) - f(v)}}{\norm{u-v}_{\mathbb{L}^2(\mathcal{O})}}$, which immediately implies 
\begin{align}
    \label{eq:lip-propagation-2}
     \seminorm{  \mathcal{P}^n_{\tau,\disc} f - f}_{\mathrm{Lip}} &\leq 2 \seminorm{ f}_{\mathrm{Lip}}. 
\end{align}

\underline{Addressing~\eqref{eq:lip-propagation-1}.} Let $u,\, v \in \mathbb{L}^2(\mathcal{O})$. Using the definition of the semigroup (see Definition~\ref{def:discrete-trans-group}), and Identities~\eqref{eq:rep-velocity} and~\eqref{eq:iter-full-step}, we find
\begin{align*}
    \mathcal{P}_{\tau,\disc} f (u) -  \mathcal{P}_{\tau,\disc} f (v) &= \mathbb{E} \left[f\big(\Pi_\disc \mathfrak{S}(1,u) \big) - f\big(\Pi_\disc \mathfrak{S}(1,v) \big) \right] \\
    &= \mathbb{E} \left[f\big(\Pi_\disc \mathscr{S}_{\mathrm{CN}}[u,\Delta_1 W]\big) - f\big(\Pi_\disc \mathscr{S}_{\mathrm{CN}}[v,\Delta_1 W] \big) \right].
\end{align*}
Recall that~$f$ is assumed to be Lipschitz-continuous. Moreover, the full-step solution operator~$\mathscr{S}_{\mathrm{CN}}[\cdot,\Delta_1 W]$ is Lipschitz-continuous as we have verified in~\eqref{eq:Lip-cont-wrt-vel}. The Lipschitz-constant of the full-step solution operator is bounded by one, which follows from similar arguments as for the half-step solution operator~$\mathscr{S}_{\mathrm{imp}}^{1/2}[\cdot,\Delta_1 W]$. For more details, we refer to the proof of Lemma~\ref{lem:semigroup}. Therefore, we obtain
\begin{align*}
    &\mathbb{E} \left[f\big(\Pi_\disc \mathscr{S}_{\mathrm{CN}}[u,\Delta_1 W]\big) - f\big(\Pi_\disc \mathscr{S}_{\mathrm{CN}}[v,\Delta_1 W] \big) \right] \\
    &\qquad \leq \seminorm{f}_{\mathrm{Lip}} \mathbb{E} \left[\norm{\Pi_\disc \mathscr{S}_{\mathrm{CN}}[u,\Delta_1 W]- \Pi_\disc \mathscr{S}_{\mathrm{CN}}[v,\Delta_1 W] }_{L^2(\mathcal{O})} \right] \\
    &\qquad  \leq \seminorm{f}_{\mathrm{Lip}} \norm{u-v}_{L^2(\mathcal{O})},
\end{align*}
which implies~\eqref{eq:lip-propagation-1} for $n=1$. 

For general $n$, the assertion follows by the semigroup property and recursive application of the first case: 
\begin{align*}
    \seminorm{  \mathcal{P}^n_{\tau,\disc} f }_{\mathrm{Lip}}
=\seminorm{  \mathcal{P}_{\tau,\disc} (\mathcal P^{n-1}_{\tau,\disc}f)}_{\mathrm{Lip}}\le \seminorm{  \mathcal P^{n-1}_{\tau,\disc}f}_{\mathrm{Lip}} \leq \cdots \leq \seminorm{  f }_{\mathrm{Lip}}.
\end{align*}

Now, we estimate the terms~$\mathrm{I}$ and~$\mathrm{II}$.

\underline{Addressing~$\mathrm{I}$.} This term has already been estimated in Lemma~\ref{lem:rel-1st-measure-semigroup} (see \eqref{eq:est.Pf.minus.f}). Thus, it remains to observe that
\begin{align*}
    \sup_{u \in \mathbb{L}^2(\mathcal{O})} \abs{\mathcal{P}^n_{\tau,\disc} f(u) - f(u)} \leq 2 \sup_{u \in \mathbb{L}^2(\mathcal{O})} \abs{f(u)}, 
\end{align*}
since $\mathcal{P}^n_{\tau,\disc} f(u) = \int_{\mathbb{L}^2(\mathcal{O})} f(z) \mathbb{P}( \Pi_\disc \mathfrak{S}(n,u) \in \dd z) \leq \sup_{u \in \mathbb{L}^2(\mathcal{O})} \abs{f(u)}$.

\underline{Addressing~$\mathrm{II}$.} 
 To shorten the notation, we set $g= \mathcal{P}^n_{\tau,\disc} f - f$. Using the definition of the measures (see~\eqref{def:aprox-measure} and~\eqref{def:aprox-measure-shift}), the Lipschitz-property of~$g$, and the H\"older inequality, we derive
\begin{align*}
    &\langle g ,  \mu^{1/2,N}_{\tau,\disc} -\mu^{N}_{\tau,\disc}  \rangle = \frac{1}{N} \sum_{n=0}^{N-1} \mathbb{E}\left[ g\big(\Pi_\disc \mathfrak{S}^{1/2}(n,v^\mathrm{in}) \big) - g\big(\Pi_\disc \mathfrak{S}(n,v^\mathrm{in}) \big) \right] \\
    &\hspace{2em} \leq \seminorm{g}_{\mathrm{Lip}} \frac{1}{N} \sum_{n=0}^{N-1} \mathbb{E}\left[ \norm{\Pi_\disc \mathfrak{S}^{1/2}(n,v^\mathrm{in}) - \Pi_\disc \mathfrak{S}(n,v^\mathrm{in}) }_{L^2(\mathcal{O})} \right] \\
     &\hspace{2em}  \leq \seminorm{g}_{\mathrm{Lip}} \frac{1}{\sqrt{N}} \left( \sum_{n=0}^{N-1} \mathbb{E}\left[ \norm{\Pi_\disc \mathfrak{S}^{1/2}(n,v^\mathrm{in}) - \Pi_\disc \mathfrak{S}(n,v^\mathrm{in}) }_{L^2(\mathcal{O})}^2 \right] \right)^{1/2}.
\end{align*}
Invoking~\eqref{eq:lip-propagation-2} finishes the proof of Lemma~\ref{lem:mismatch-invariance}.
\end{proof}

\subsubsection{Consequences of trivial boundary conditions}
\begin{proof}[Verifying the claims presented in Example~\ref{ex:trivial-BC}]
Let $g = 0$. 

\underline{Characterising the invariant measure.} First, we notice that $\delta_0$ is indeed $\mathcal{P}_{\tau,\disc}$-invariant. To see this, let $v^\mathrm{in} = 0$. Invoking the pathwise energy equality (see Identity~\eqref{eq:path-stab}) yields
\begin{align*}
 \norm{\Pi_\disc \mathfrak{S}(1,0)}_{L^2(\mathcal{O})}^2 \leq   \norm{0}_{L^2(\mathcal{O})}^2 = 0,   
\end{align*}
which ensures: $\mathfrak{S}(1,0) = 0_\disc$, and consequently: $\Pi_\disc \mathfrak{S}(1,0) = 0$. Thus,
\begin{align*}
    \langle P_{\tau,\disc} f , \delta_0 \rangle = P_{\tau,\disc} f(0) = \mathbb{E}\left[ f(\Pi_\disc  0_\disc) \right] = f(0) =   \langle f , \delta_0 \rangle.
\end{align*}
Since there is at most one $\mathcal{P}_{\tau,\disc}$-invariant measure by Theorem~\ref{thm:unique}, it has to be~$\delta_0$.

Now, let $v^\mathrm{in} \in \mathbb{L}^2(\mathcal{O})$.

\underline{Convergence of velocity.} The pathwise energy identity~\eqref{eq:path-stab} ensures the convergence of the accumulated reconstructed symmetric gradients; thus, in particular~$\mathbb{P}$-a.s.: $\lim_{n \to \infty} \varepsilon_\disc \mathfrak{S}^{1/2}(n,v^\mathrm{in}) = 0$, which immediately guarantees~$\mathbb{P}$-a.s.: $\lim_{n \to \infty} \Pi_\disc\mathfrak{S}^{1/2}(n,v^\mathrm{in}) = 0$. The convergence of reconstructed velocity at integer times follows similarly to the proof of Theorem~\ref{thm:unique}; in this way, one obtains~$\mathbb{P}$-a.s.:
\begin{align} \label{eq:conv-integer}
    \lim_{n\to \infty} \norm{\Pi_\disc \mathfrak{S}(n,v^\mathrm{in})}_{L^2(\mathcal{O})} = 0.
\end{align}

\underline{Convergence of measures.}
We only present the details for the first sequence of measures; the second case follows analogously. Let $f \in C_b\big(\mathbb{L}^2(\mathcal{O})\big)$ be Lipschitz-continuous. By definition of the approximate measures and the Lipschitz-continuity of $f$, it holds
\begin{align*}
    \langle f, \mu_{\tau,\disc}^{N}(v^\mathrm{in}; \cdot) - \delta_0 \rangle &= \frac{1}{N} \sum_{n=0}^{N-1} \mathbb{E}\left[ f\big(\Pi_\disc \mathfrak{S}(n,v^\mathrm{in}) \big) - f(0) \right] \\
    &\leq C_f \frac{1}{N} \sum_{n=0}^{N-1} \mathbb{E}\left[ \norm{\Pi_\disc \mathfrak{S}(n,v^\mathrm{in}) - 0}_{L^2(\mathcal{O})} \right].
\end{align*}
Due to~\eqref{eq:conv-integer}, the right-hand-side vanishes when the time horizon~$N$ is sent to infinity, which establishes the assertion. 
\end{proof}

\section{Numerical simulations} \label{sec:num-sim}
In this section, we conduct two experiments: (EXP-1) coincides with the first set of experiments conducted in~\cite{breit2024means} by Breit, Moyo, Prohl, and the last author for investigating the effect of different noises on the stochastic Navier--Stokes equations; (EXP-2) is the classical lid-driven cavity experiment, which is frequently used in fluid dynamics; see, e.g.,~\cite{kuhlmann2019lid} and the references therein.  

Our guiding research questions are: 
\begin{enumerate}
    \item[(Q1)] \label{it:q1} Do we observe convergence of the velocity distribution towards the invariant measure?
    \item[(Q2)] \label{it:q2} If so, how does the invariant measure depend on the growth rate of the viscous stress?
\end{enumerate}
We address these questions by monitoring different solution statistics, such as the system's kinetic energy and the distribution of velocity at a fixed spatial location. If the stochastic system reaches its stationary state, then these statistics necessarily need to be invariant with respect to time. Therefore, if we numerically observe stationarity, then this provides us with numerical evidence for having reached the stationary state; but we cannot infer it with certainty, since we don't access the full velocity distribution. On the other hand, the converse is certain: if we don't observe stationarity, then the system hasn't reached its invariant state yet.  

The implementation of the algorithm as well as the code used for conducting the experiments are available at~\href{https://github.com/joernwichmann/gen-Stokes}{https://github.com/joernwichmann/gen-Stokes}; they use the open-source finite element package \textit{Firedrake}~\cite{FiredrakeUserManual}, which itself heavily relies on \textit{PETSc}~\cite{petsc-web-page}. 

The presentation of this section follows~\cite[Section~7]{Le2024Spacetime}; see also~\cite[Section~4]{breit2024means}. All experiments are conducted on the $2$-dimensional unit square.

\subsection{Choosing the GD}
Even though the GDM encapsulates many discretisation methods in a unified analytic toolbox, for the numerical simulations we need to pick one particular scheme that fits into this framework. As the focus of this work is really on the long-term behaviour of the model, we choose to illustrate that behaviour using a standard method, the Taylor--Hood finite element -- a stable mixed finite element pair; see, e.g.,~\cite{Taylor1973} -- generated by a fixed uniform triangulation of the unit square with~$13\times 13$ vertices; see \cite[Figure~3]{breit2024means}. The pair consists of continuous velocity and continuous pressure approximate spaces, generated by local polynomials of degree $2$ and $1$, respectively. 
In this case, the function reconstruction operators~$\Pi_\disc$ and~~$\chi_\disc$ map the vector of degrees of freedom (with~$\mathrm{DOF}$ components) to the pairing with respect to a fixed basis~$(\phi_j)_{j=1}^{\mathrm{DOF}}$ of their corresponding finite element space; for example, the pairing for velocity is given by: 
\begin{align*}
    X_{\disc,0} \ni u_\disc \mapsto \Pi_{\disc} u_\disc (x) := \sum_{j=1}^{\mathrm{DOF}} u_\disc^{j} \phi_j(x)  \subset W^{1,\infty}_0(\mathcal{O}).
\end{align*}
Due to conformity of the Taylor--Hood element, the reconstructions of differential operators coincide with their continuous counterparts: i.e., $\nabla_\disc = \nabla \Pi_\disc$, $\varepsilon_\disc = \varepsilon \Pi_\disc$, and $\Div_\disc = \Div \Pi_\disc$.  

\subsection{Sampling strategy}
To access the Wiener increments, we employ the Monte-Carlo method. Let $L\in \mathbb{N}$ be the sample size. 
We substitute realizations of the random vector $(\Delta_m W(\omega_\ell) )_{m\in \mathbb{N}}$, $\ell =1,\ldots,L $, by:
\begin{align*} 
Z_\ell^m \approx \Delta_m W(\omega_\ell), \qquad m \in \mathbb{N},\quad  \ell =1, \ldots, L,
\end{align*}
which are independently generated by a pseudo-random number generator.

\subsection{Implemented algorithm}
We implement a slightly modified version of our algorithm, which additionally accounts for an external, time-independent, and deterministic force~$F$ and non-solenoidal boundary conditions. 

\underline{Step 1:~Initialisation.} For all $\ell =1,\ldots,L$, define $\big(u_{\disc,\ell}^{0}, \pi_{\disc,\ell}^{0} \big)$ by solving~\eqref{eq:disc-Helmholtz};

\underline{Step 2:~Time-stepping.} For $n \in \mathbb{N}_0$ and  $\ell =1,\ldots,L$, define $(v_{\disc,\ell}^{n+1}, \pi_{\disc,\ell}^{n+1})$ by solving: for all $(\xi,q) \in X_{\disc,0} \times Y_{\disc,0}$,
\begin{subequations} \label{eq:time-stepping-implemented}
\begin{align} \label{eq:Evolution-01-imp}
&\begin{aligned}
&\left( \Pi_\disc v^{n+1}_{\disc,\ell} -\Pi_\disc v^n_{\disc,\ell}, \Pi_\disc \xi \right)  - \left( \chi_\disc \pi^{n+1}_{\disc,\ell} - \chi_\disc \pi^n_{\disc,\ell}, \Div_\disc \xi \right) \\ 
&\qquad \qquad + \tau \left( S( \varepsilon_\disc v^{n+1/2}_{\disc,\ell} + \varepsilon g ), \varepsilon_\disc \xi \right)  \\
&\qquad= \tau \left( F, \Pi_\disc \xi \right) +  \left[ B_\disc^\sigma( v^{n+1/2}_{\disc,\ell}, \xi)+ \left( (\sigma \cdot \nabla)g , \Pi_{\disc} \xi \right) \right] Z_\ell^{n+1} ,
\end{aligned}\\ \label{eq:Evolution-02-imp}
 & \left(  \Div_\disc v^{n+1/2}_{\disc,\ell},\chi_\disc q \right) = \left(  \Div_\disc g,\chi_\disc q \right) ,
\end{align}
\end{subequations}
where $v^{n+1/2}_{\disc,\ell}:= \frac{v^{n+1}_{\disc,\ell} + v^{n}_{\disc,\ell}}{2}$.

For actually solving the non-linear system of equations, we rely on the \textit{firedrake.solve} function that internally uses the nonlinear solver~\textit{PETSc.SNES} with $10^{-8}$ as absolute and relative tolerance parameters, and the parallel sparse direct solver~\textit{MUMPS}.

To compare the stochastic dynamics with the deterministic one, we always simulate both cases:~$\sigma \neq 0$ and $\sigma = 0$. In all experiments, we fix the following parameters:
\begin{itemize}
\item (viscous stress) $\kappa = 0.1$ and $p  \in \{ 1.5, 2, 3\}$;
\item (time step size) $\tau \in \{ 2^{-7},2^{-8},2^{-9}\}$;
\item (sample size) $L = 1000$;
\item (time horizon) $T = 1$.
\end{itemize}

\subsection{EXP-1: Trivial boundary conditions and deterministic forcing}
As already noted, this experiment enables the comparison of our algorithm to the one presented in~\cite{breit2024means}. The external, deterministic force causes the invariant state to become non-trivial, even for trivial boundary conditions. 

We choose the following data: 
    \begin{align*}
    v^\mathrm{in}(x,y) &= 10^3 \begin{pmatrix}
x^2(1-x)^2(2-6y+4y^2)y \\
-y^2(1-y)^2(2-6x+4x^2)x
\end{pmatrix}, \\
    \sigma(x,y) &= 10^3 \begin{pmatrix}
x^2(1-x)^2(2-6y+4y^2)y \\
-y^2(1-y)^2(2-6x+4x^2)x
\end{pmatrix}, \\
g(x,y) &= \begin{pmatrix}
0 \\
0
\end{pmatrix}, \\
F(x,y) &= 10^2 \begin{pmatrix}
\sin(2 \pi x)\sin(4 \pi y) \\
-\sin(4\pi x) \sin(2 \pi y)
\end{pmatrix}.
\end{align*}

\subsection{EXP-2: Lid-driven cavity}
This experiment repeats the commonly used lid-driven cavity experiment in our particular configuration; that is, for the generalised Stokes equations forced by transport noise as well as our newly proposed algorithm. 

The lid-driven cavity experiment considers a resting fluid in a box, which is influenced by a non-trivial boundary condition acting solely at the container's lid; the boundary conditions push the fluid at a constant rate into one direction. Initially, the fluid starts at rest. Eventually, induced by the boundary conditions, the fluid displays versatile dynamics in the whole container; see, e.g.,~\cite{Zhu2020}.

Resolving the full range of dynamics in numerical simulations is a challenging task. Motivated by this, many authors used the lid-driven cavity experiment as a benchmark for numerical algorithms; see, e.g.,~\cite{Botti2019,Erturk2005,Zhu2020,kuhlmann2019lid} and the references therein. Unfortunately, numerical simulations for stochastic models of the lid-driven cavity experiment are still largely missing. In particular, no simulations have been conducted for stochastic fluid equations that model turbulence. We could only find results addressing parameter uncertainties; see, e.g.,~\cite{Wang2010}, which ambiguously are also called ``stochastic cavity flow''. However, the modeling of parameter uncertainty and turbulence are completely different; our simulations contribute to the latter. 

We choose the following data: 
\begin{align*}
    v^\mathrm{in}(x,y) &= \begin{pmatrix}
0 \\
0
\end{pmatrix}, \\
    \sigma(x,y) &= 10^3 \begin{pmatrix}
x^2(1-x)^2(2-6y+4y^2)y \\
-y^2(1-y)^2(2-6x+4x^2)x
\end{pmatrix}, \\
g(x,y) &= \begin{pmatrix}
1 \\
0
\end{pmatrix} \mathbf{1}_{\{y=1\}}(x,y), \\
F(x,y) &= \begin{pmatrix}
0 \\
0
\end{pmatrix},
\end{align*}
where $\mathbf{1}_{\{y=1\}}$ denotes the indicator function on $\{y=1\}$.

\subsection{Results}

To answer Question~\ref{it:q1}, all numerical experiments indicate that our algorithm gives rise to an invariant measure. They even support the hypothesis that the distribution of velocity converges without considering time-averages; that is,  
\begin{align*}
     \mathbb{P}\left(\Pi_\disc \mathfrak{S}(N,v^\mathrm{in}) \in A \right) \overset{N\to \infty}{\rightarrow} \mu_{\tau,\disc}(A).
\end{align*}
A theoretical explanation for this still needs to be researched.

Next, we address Question~\ref{it:q2}. In all experiments, transport noise enhances the dissipation of kinetic energy; see Figures~\ref{fig:varkin-traj} and~\ref{fig:lidkin-traj}. Depending on the growth rate of the viscous stress, the difference between the stochastic and deterministic energy evolutions can be several orders of magnitude. 

In EXP-1, transport noise causes improved mixing; see Figures~\ref{fig:varStream}: the streamline plots are generated by tracing the same~$400$ particles for each experiment. The particles are initially positioned at four lines: both diagonals, a horizontal line and a vertical line both passing through $(0.5,0.75)$. The randomness enables the particles to reach all locations, at least for shear-thinning fluids ($p=1.5$). For the deterministic dynamics, the shape of the streamlines is mostly unaffected by changing the growth rate; the change influences the velocity magnitude of the particles. The stochastic case is different. Here, the change influences the shape of the streamlines, while only changing the velocity magnitude marginally. This is also indicated by the statistics of the velocity vector at the fixed spatial location~$(0.5,0.75)$; see Figure~\ref{fig:varpoint-hist}. The deterministic velocity vector doesn't change its direction. In contrast, the distribution of the stochastic velocity vector is supported in a neighbourhood of the origin; in particular, the stochastic velocity vector points in all directions with positive probability. The intensity of the random effects increases as the growth rate decreases, which can be seen by, e.g., the magnitudes of the standard deviations and the difference in the energies; see Figures~\ref{fig:varDev} and~\ref{fig:varkin-traj}, respectively. Regions where the velocity trajectories deviate the most from its mean velocity are shown in Figure~\ref{fig:varDev}.

\begin{figure}
    \centering
    \begin{subfigure}{1\textwidth}
    \includegraphics[width=0.5\linewidth]{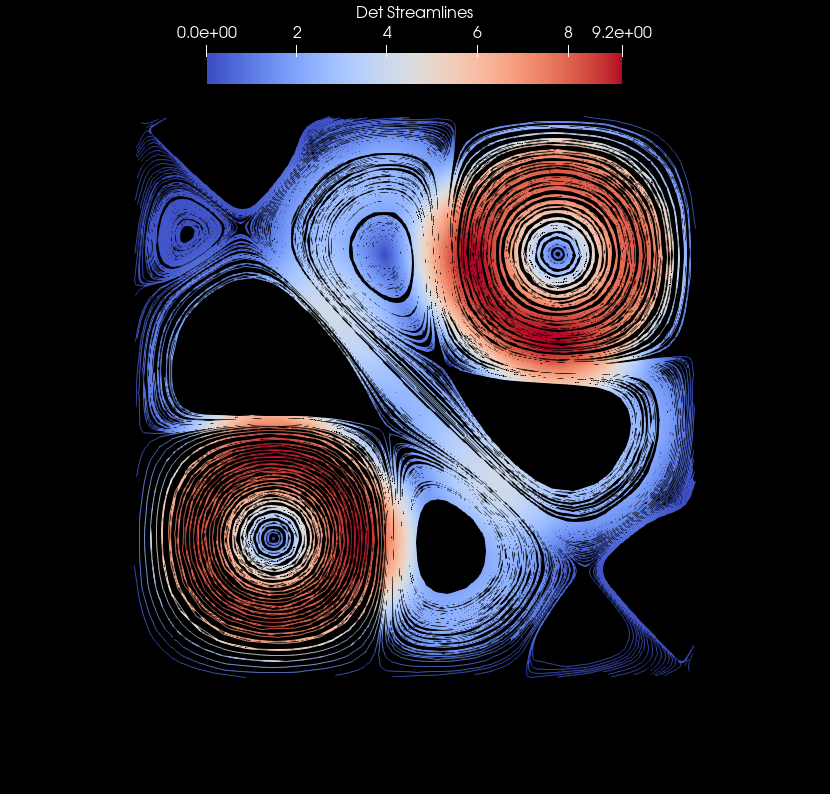}%
    \includegraphics[width=0.5\linewidth]{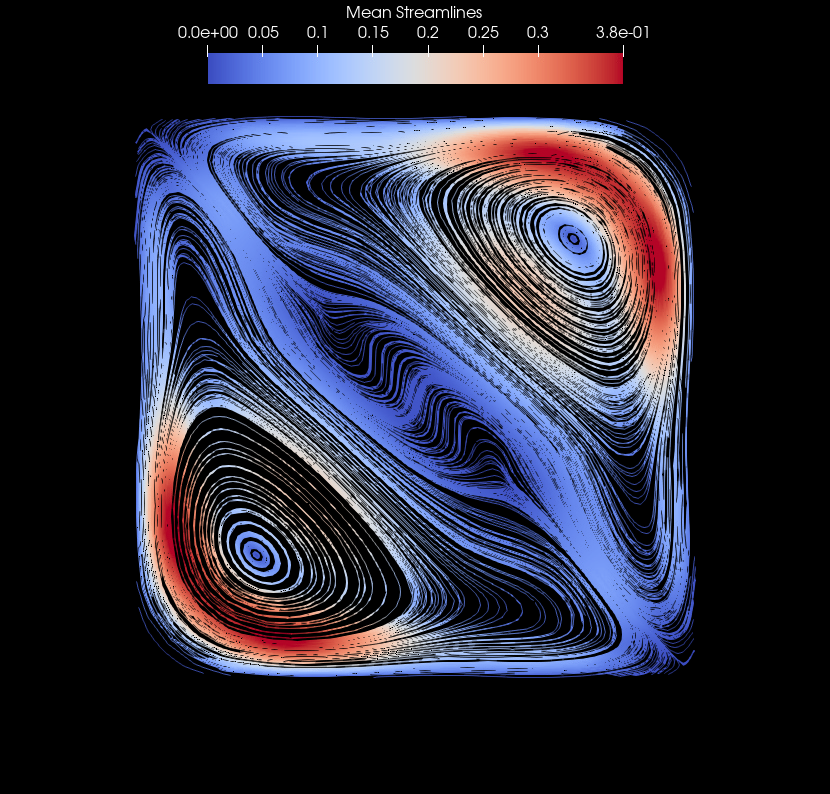}
    \caption{Streamlines for EXP-1 with viscous growth rate $p=1.5$.}
    \label{fig:varStream-smallP}
    \end{subfigure}
    \begin{subfigure}{1\textwidth}
    \includegraphics[width=0.5\linewidth]{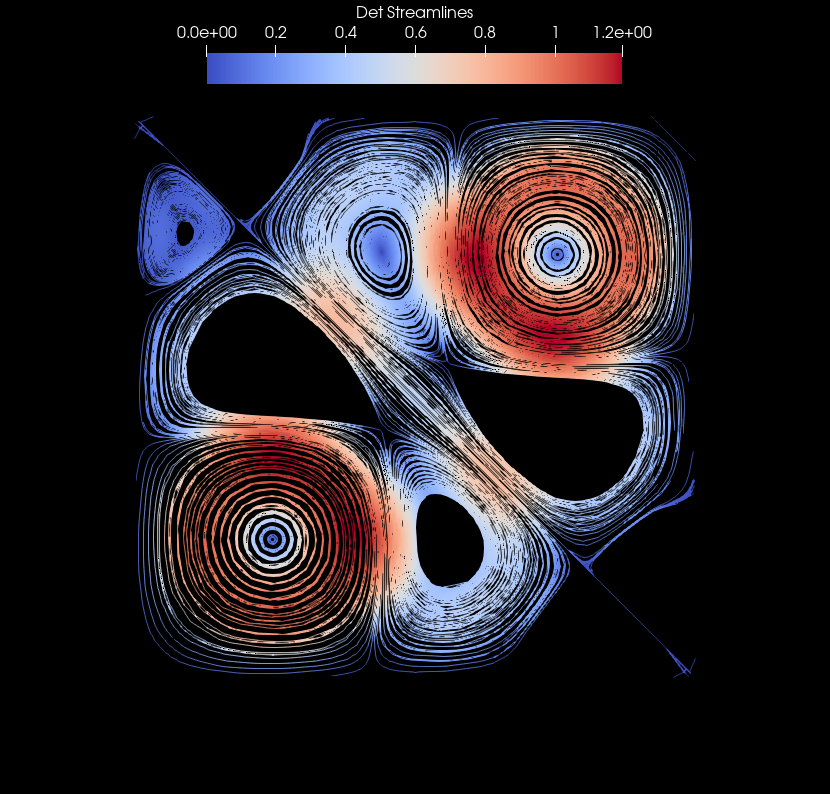}%
    \includegraphics[width=0.5\linewidth]{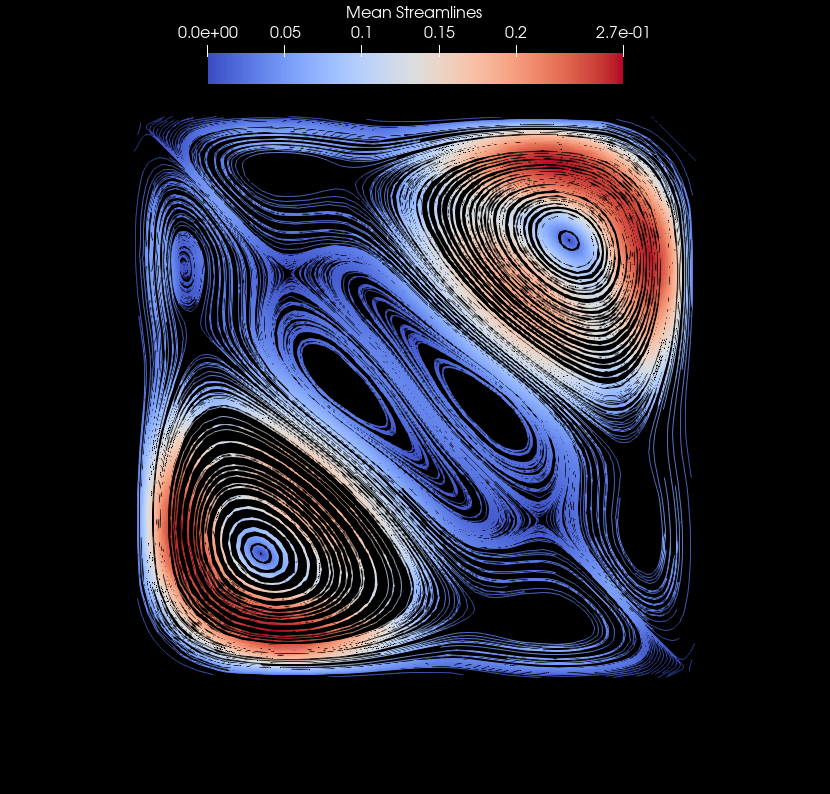}
    \caption{Streamlines for EXP-1 with viscous growth rate $p=2$.}
    \label{fig:varStream-middleP}
    \end{subfigure}
    \begin{subfigure}{1\textwidth}
    \includegraphics[width=0.5\linewidth]{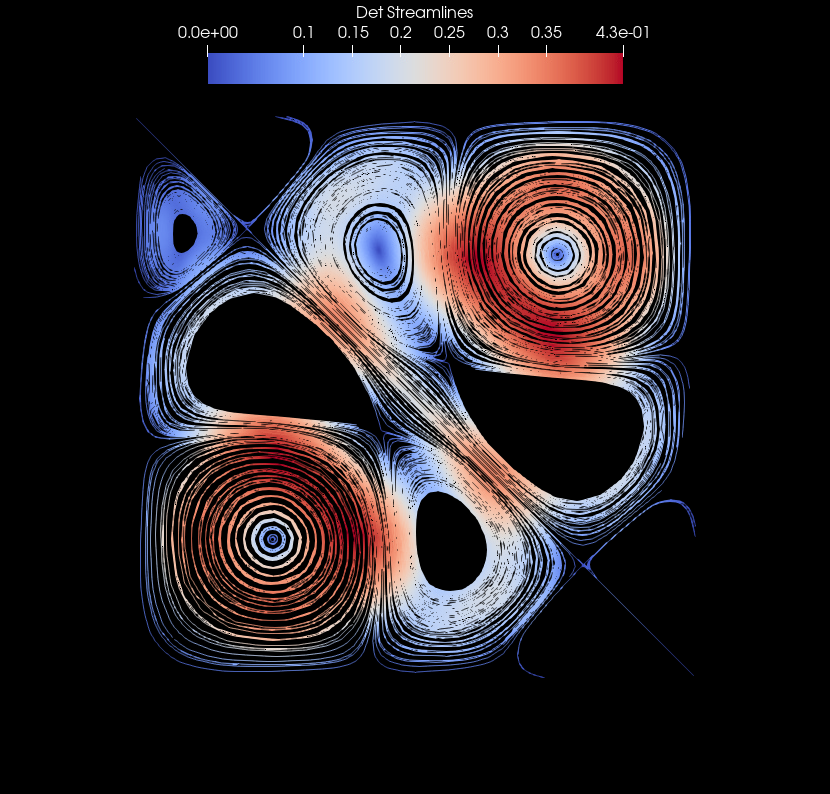}%
    \includegraphics[width=0.5\linewidth]{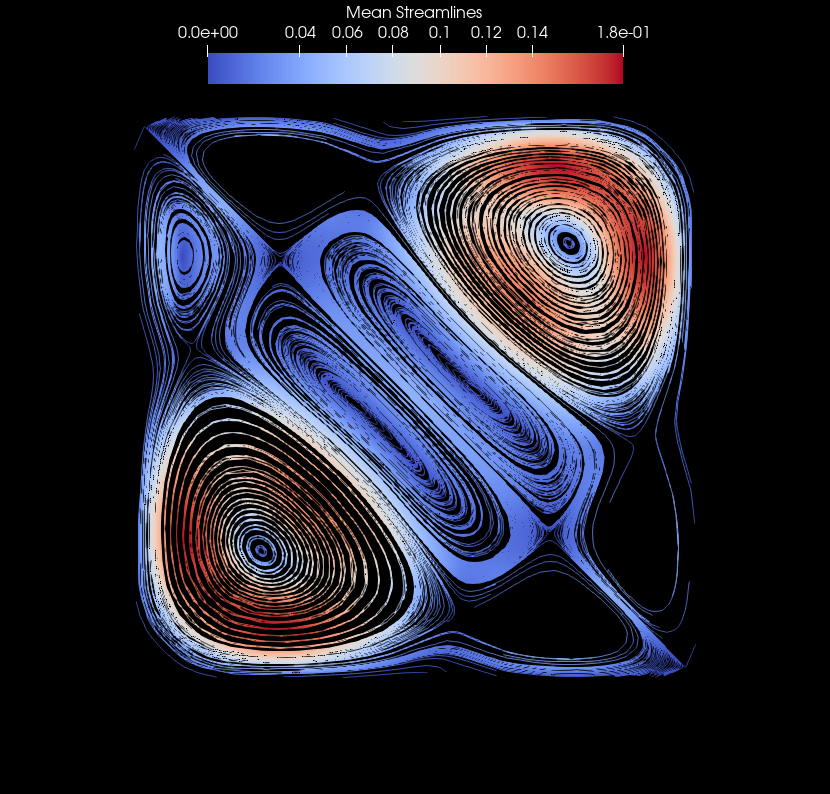}
    \caption{Streamlines for EXP-1 with viscous growth rate $p=3$.}
    \label{fig:varStream-bigP}
    \end{subfigure}
    \caption{Streamlines for EXP-1 with varying viscous growth rates; left: streamlines of deterministic dynamics; right: mean (based on 1,000 trajectories) streamlines of stochastic dynamics. Colour encodes the velocity magnitude; its scaling changes from figure to figure.}
    \label{fig:varStream}
\end{figure}

\begin{figure}
    \centering
    \begin{subfigure}{1\textwidth}
    \includegraphics[width=0.5\linewidth]{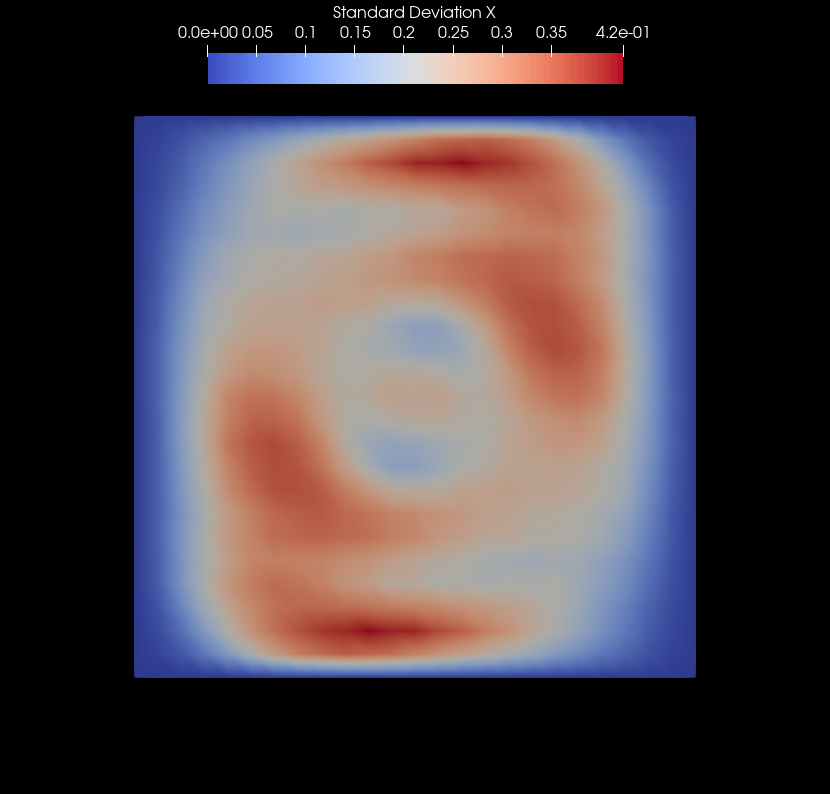}%
    \includegraphics[width=0.5\linewidth]{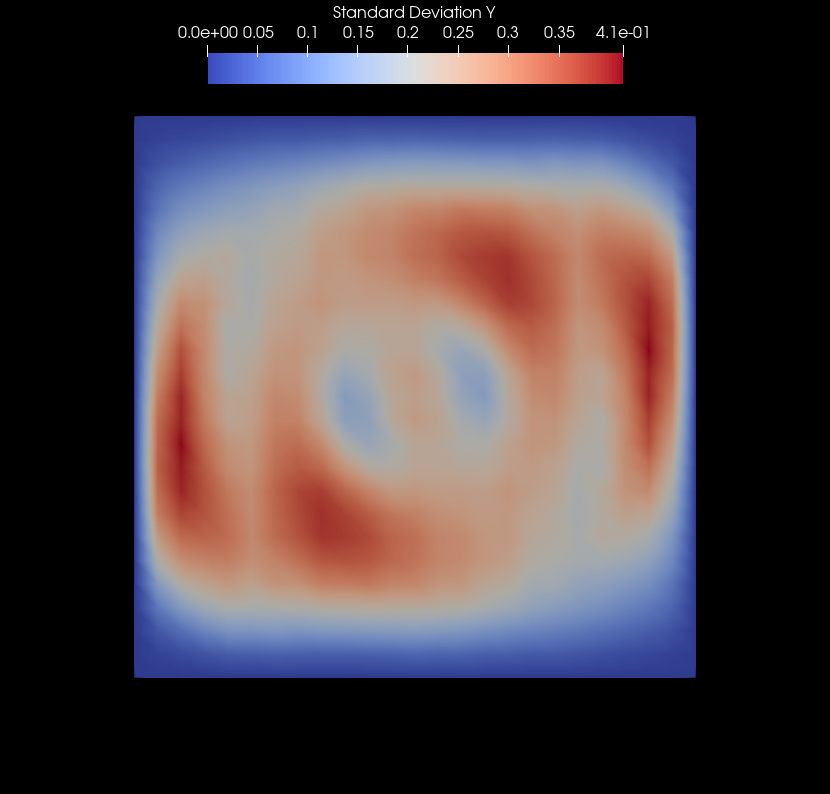}
    \caption{SD of velocity for EXP-1 with viscous growth rate $p=1.5$.}
    \label{fig:varDev-smallP}
    \end{subfigure}
    \begin{subfigure}{1\textwidth}
    \includegraphics[width=0.5\linewidth]{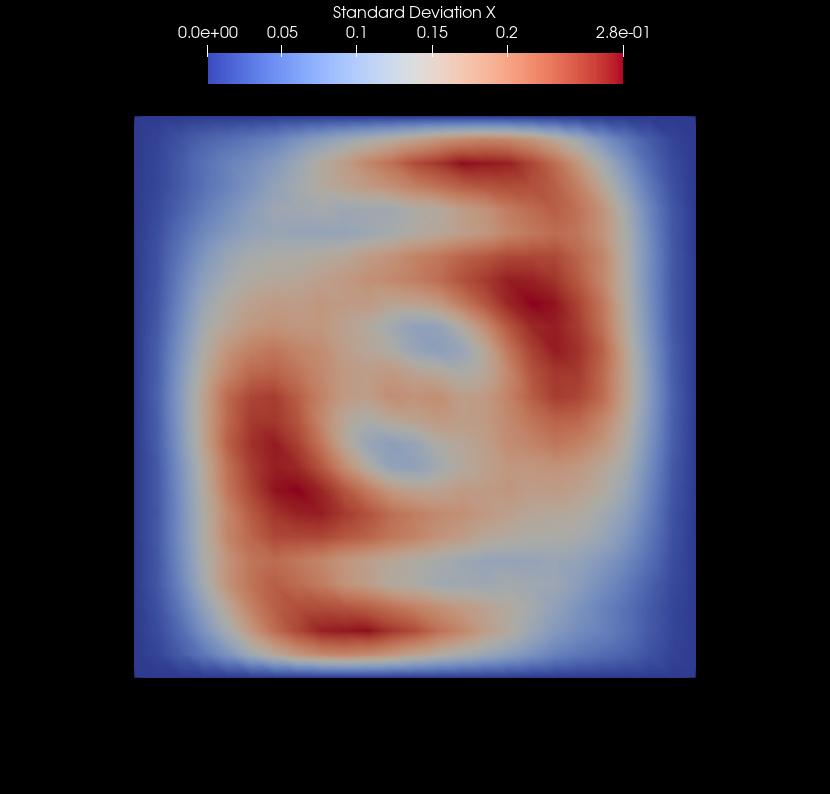}%
    \includegraphics[width=0.5\linewidth]{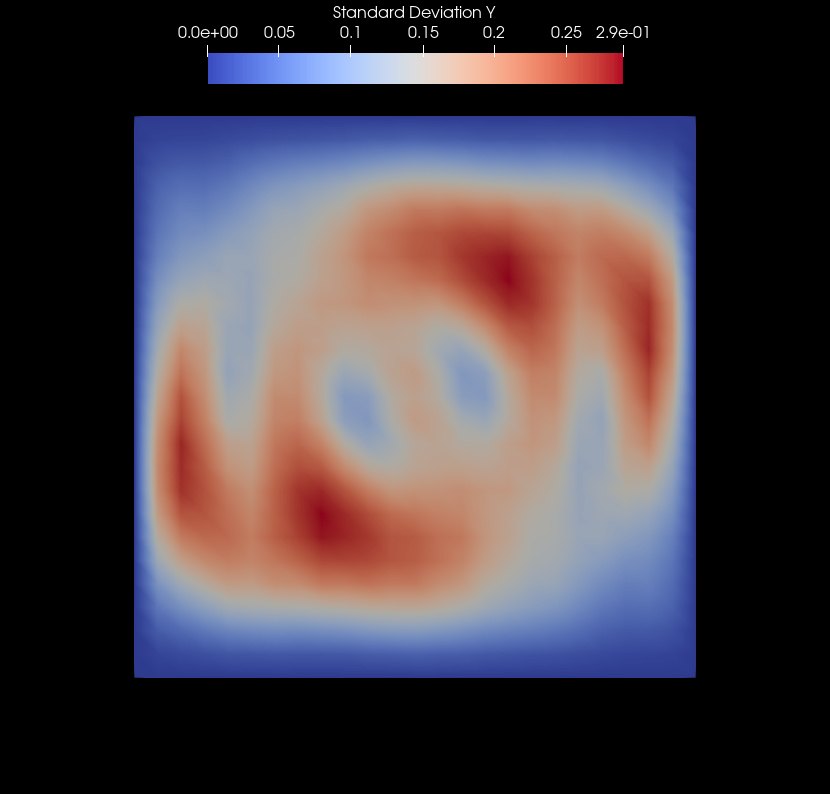}
    \caption{SD of velocity for EXP-1 with viscous growth rate $p=2$.}
    \label{fig:varDev-middleP}
    \end{subfigure}
    \begin{subfigure}{1\textwidth}
    \includegraphics[width=0.5\linewidth]{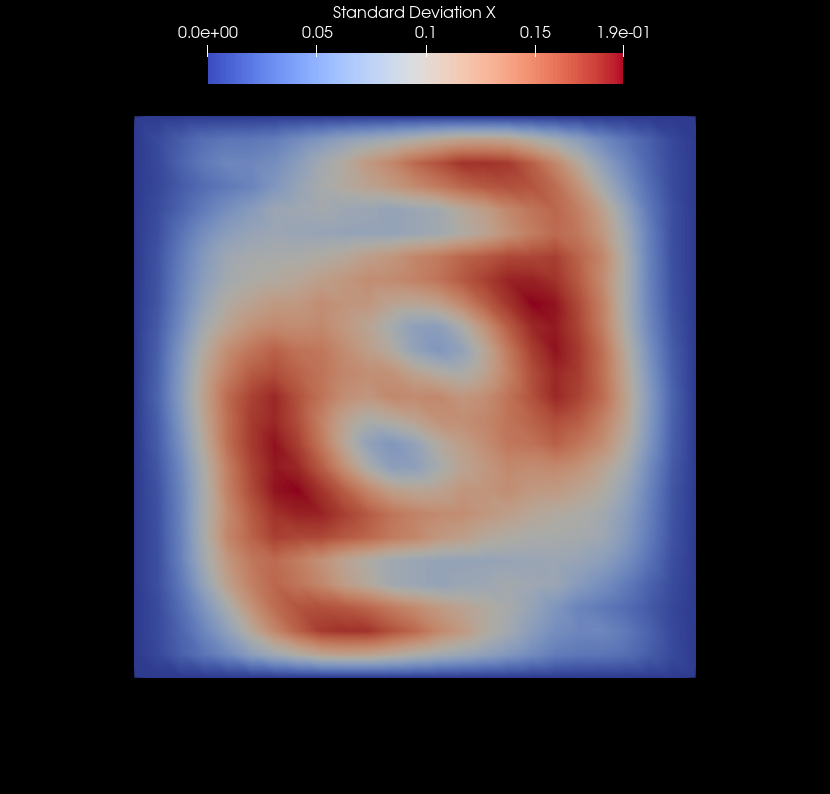}%
    \includegraphics[width=0.5\linewidth]{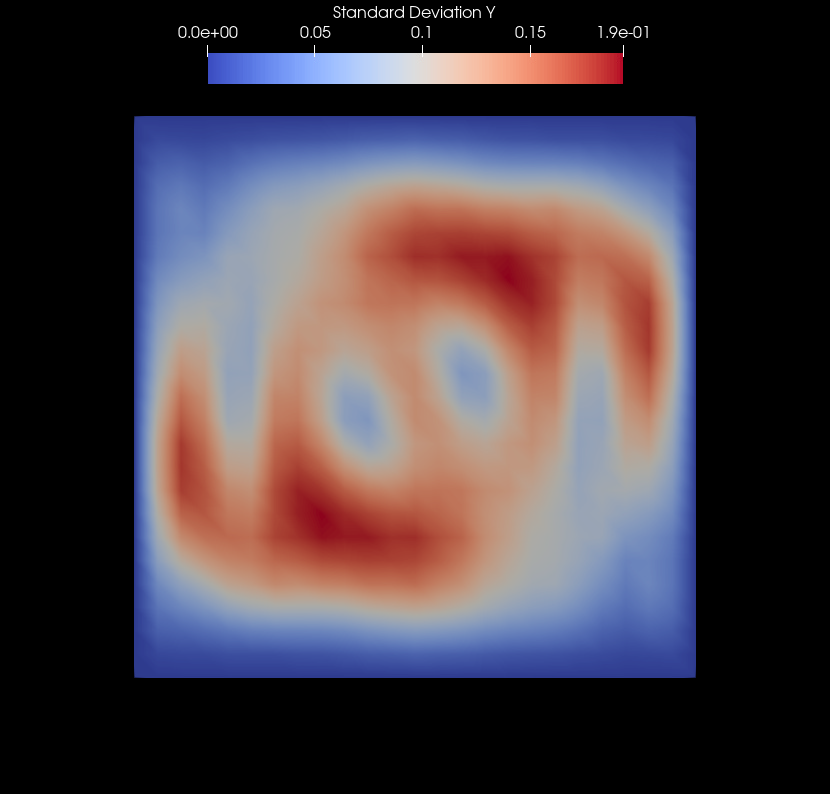}
    \caption{SD of velocity for EXP-1 with viscous growth rate $p=3$.}
    \label{fig:varDev-bigP}
    \end{subfigure}
    \caption{Standard deviation (SD) of velocity for EXP-1 with varying viscous growth rates; left: $x$-component; right: $y$-component. Colour encodes the magnitude; its scaling changes from figure to figure.}
    \label{fig:varDev}
\end{figure}

\begin{figure}
    \centering
    \begin{subfigure}{1\textwidth}
    \includegraphics[width=1\linewidth]{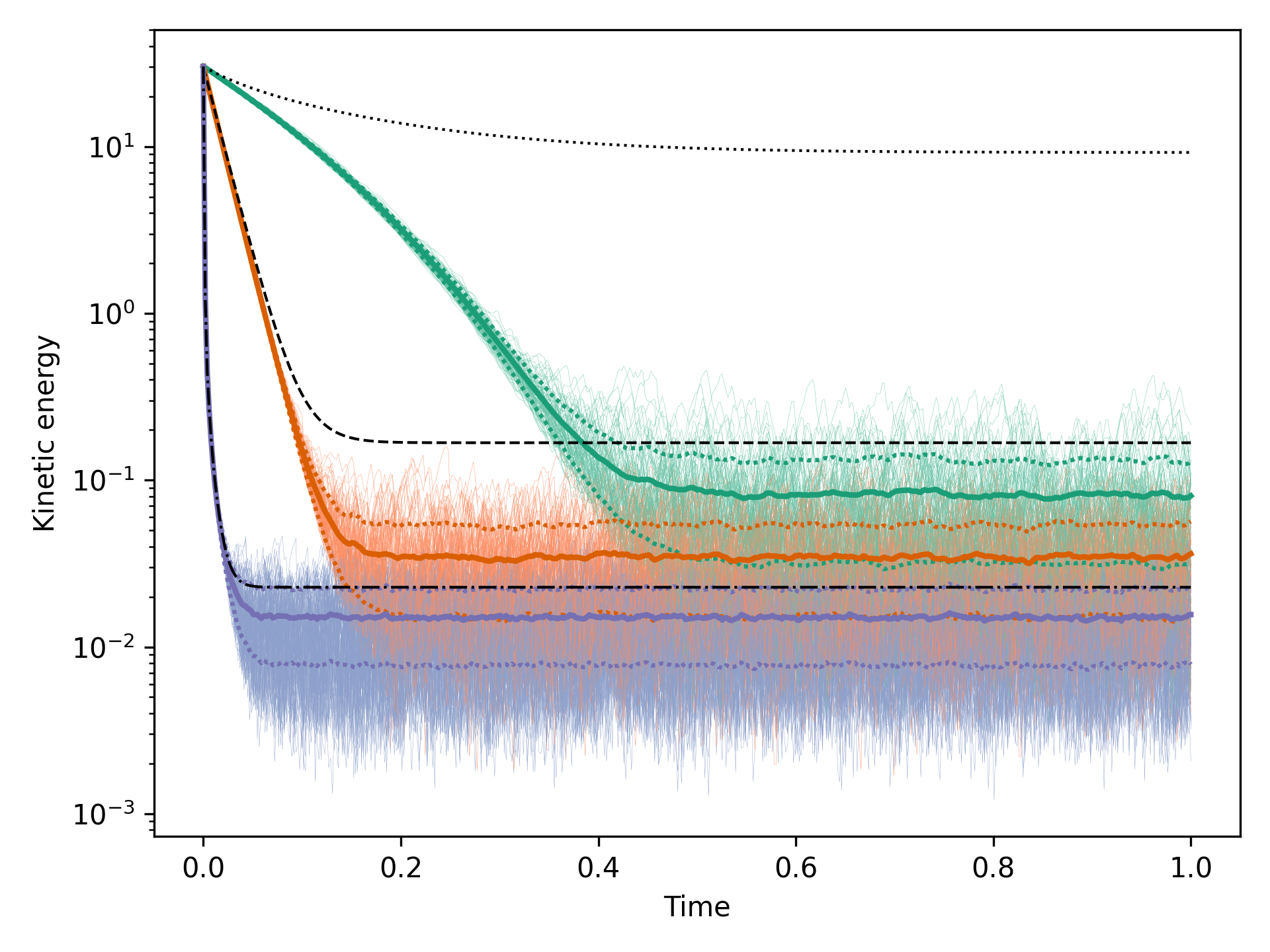}
    \vspace{-2em}
    \caption{Time evolution of kinetic energy: $\tau n \mapsto \tfrac{1}{2}\norm{\Pi_\disc v_\disc^n}_{L^2(\mathcal{O})}^2$, for EXP-1 with viscous growth rates: $p=1.5$~(sto:~{\protect\tikz \protect\draw[color=intro_color1, line width=2] (0,0) -- (0.5,0);}; det:~{\protect\tikz \protect\draw[color=intro_color4, line width=1,dotted] (0,0) -- (0.5,0);}), $p=2$~(sto:~{\protect\tikz \protect\draw[color=intro_color2, line width=2] (0,0) -- (0.5,0);}; det:~{\protect\tikz \protect\draw[color=intro_color4, line width=1,dashed] (0,0) -- (0.5,0);}), and $p=3$~(sto:~{\protect\tikz \protect\draw[color=intro_color3, line width=2] (0,0) -- (0.5,0);}; det:~{\protect\tikz \protect\draw[color=intro_color4, line width=1,dashdotted] (0,0) -- (0.5,0);}). Thick lines and dotted lines show the mean energy and the mean energy plus or minus one standard deviation, respectively. The first 100 (out of 1,000) energy trajectories are shown in pale colours.}
    \label{fig:varkin-traj}
    \end{subfigure}
    \begin{subfigure}{1\textwidth}
    \includegraphics[width=1\linewidth]{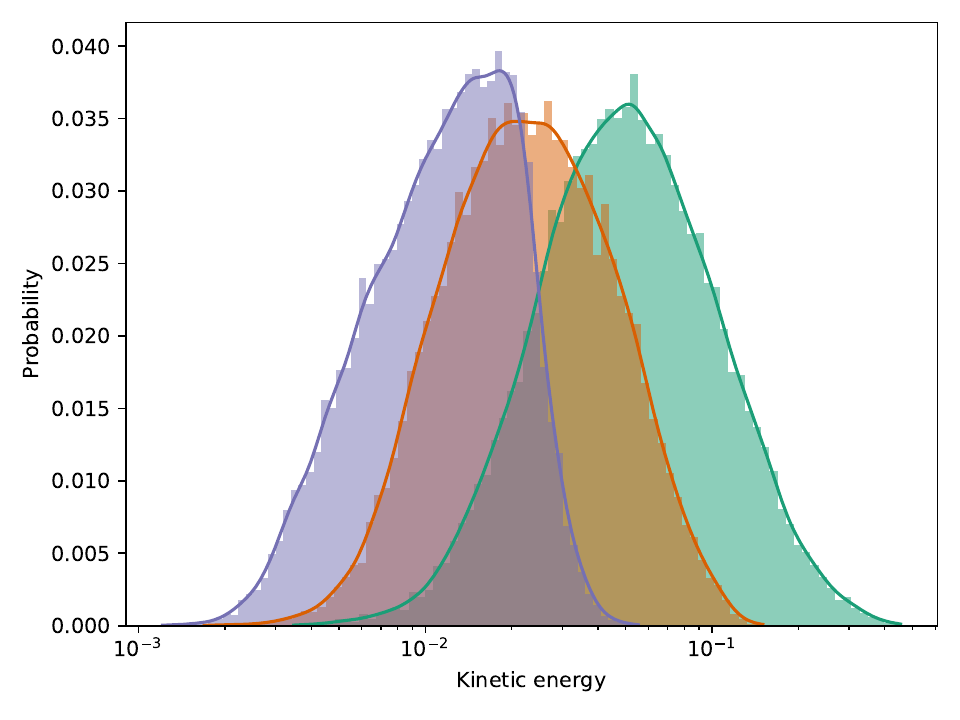}
    \vspace{-2em}
    \caption{Empirical approximation (based on 100 trajectories) of the stationary distributions of the kinetic energy for EXP-1 with viscous growth rates: $p=1.5$~({\protect\tikz \protect\draw[color=intro_color1, line width=2] (0,0) -- (0.5,0);}), $p=2$~({\protect\tikz \protect\draw[color=intro_color2, line width=2] (0,0) -- (0.5,0);}), and $p=3$~({\protect\tikz \protect\draw[color=intro_color3, line width=2] (0,0) -- (0.5,0);}).}
    \label{fig:varkin-hist}
    \end{subfigure}
    \vspace{-2em}
    \caption{Statistics of kinetic energy for EXP-1.}
    \label{fig:varkin-total}
\end{figure}

\begin{figure}
    \centering
    \begin{subfigure}{1\textwidth}
    \centering
    \includegraphics[scale=0.625]{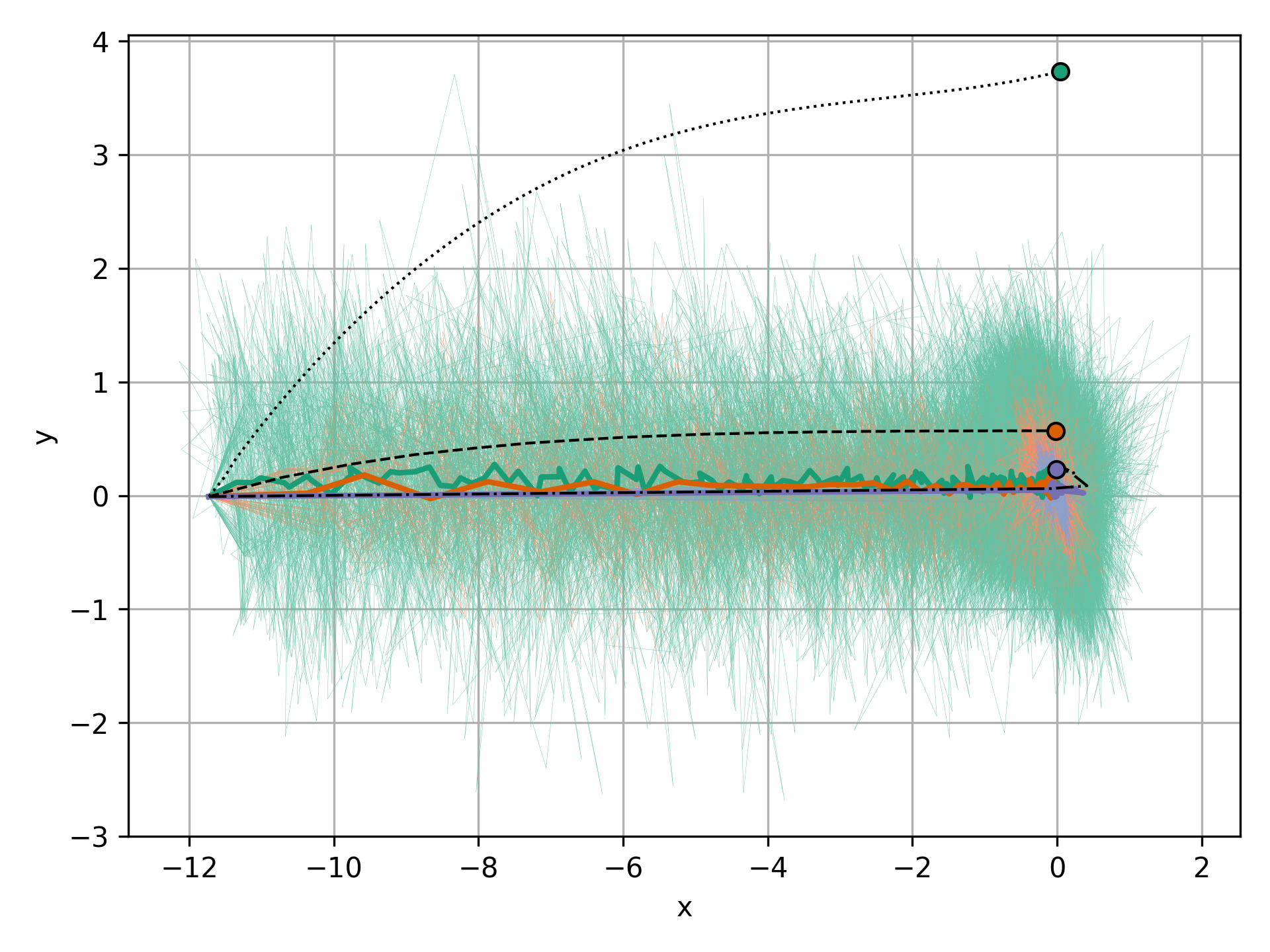}
    \vspace{-1em}
    \caption{Time evolution of the velocity vector at the fixed spatial location $(x^*,y^*) = (0.5,0.75)$ with viscous growth rates: $p=1.5$~(sto:~{\protect\tikz \protect\draw[color=intro_color1, line width=2] (0,0) -- (0.5,0);}; det:~{\protect\tikz \protect\draw[color=intro_color4, line width=1,dotted] (0,0) -- (0.5,0);}), $p=2$~(sto:~{\protect\tikz \protect\draw[color=intro_color2, line width=2] (0,0) -- (0.5,0);}; det:~{\protect\tikz \protect\draw[color=intro_color4, line width=1,dashed] (0,0) -- (0.5,0);}), and $p=3$~(sto:~{\protect\tikz \protect\draw[color=intro_color3, line width=2] (0,0) -- (0.5,0);}; det:~{\protect\tikz \protect\draw[color=intro_color4, line width=1,dashdotted] (0,0) -- (0.5,0);}). Thick lines show the mean velocity vector. Individual trajectories are shown in pale colours. Deterministic stationary velocity vectors are denoted by: {\protect\tikz \protect\draw[color=intro_color4,fill=intro_color1] (0,0) circle (.5ex);}, {\protect\tikz \protect\draw[color=intro_color4,fill=intro_color2] (0,0) circle (.5ex);}, and {\protect\tikz \protect\draw[color=intro_color4,fill=intro_color3] (0,0) circle (.5ex);} for $p=1.5$, $p=2$, and $p=3$, respectively.}
    \label{fig:varpoint-traj}
    \end{subfigure}
    \begin{subfigure}{1\textwidth}
    \centering
    \includegraphics[scale=0.625]{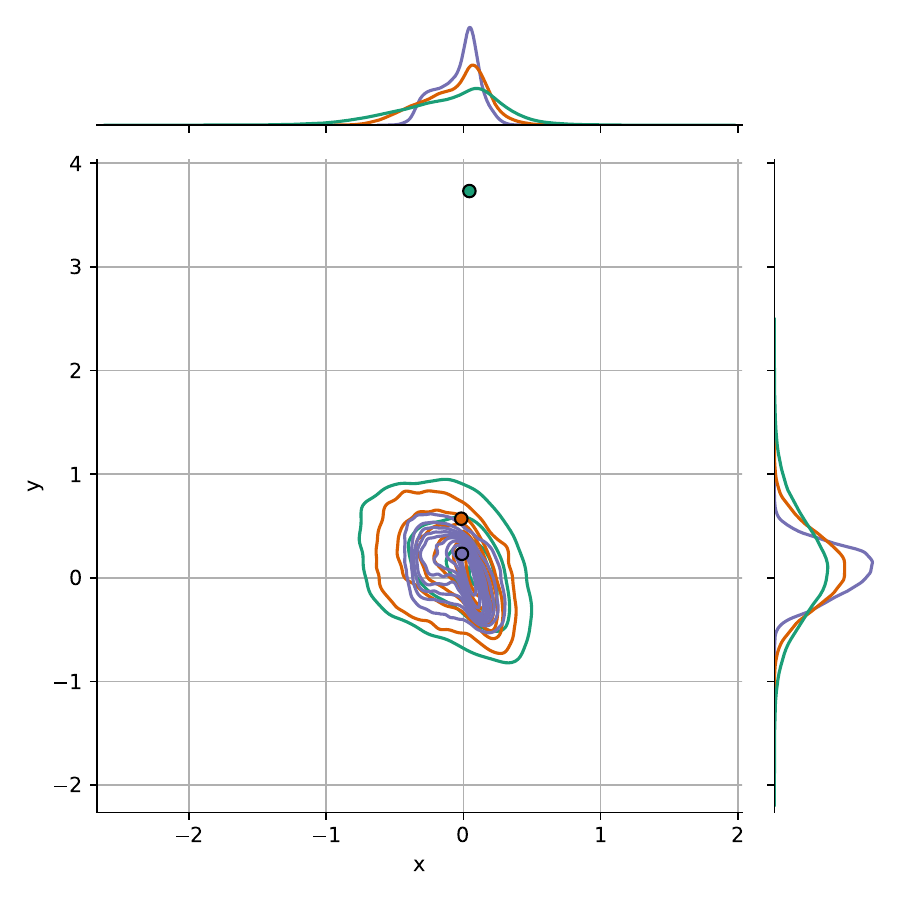}
    \vspace{-1em}
    \caption{Empirical approximation (based on 100 trajectories) of the stationary distributions of the velocity vector at the fixed spatial location $(x^*,y^*) = (0.5,0.75)$ with viscous growth rates: $p=1.5$~({\protect\tikz \protect\draw[color=intro_color1, line width=2] (0,0) -- (0.5,0);}), $p=2$~({\protect\tikz \protect\draw[color=intro_color2, line width=2] (0,0) -- (0.5,0);}), and $p=3$~({\protect\tikz \protect\draw[color=intro_color3, line width=2] (0,0) -- (0.5,0);}). Deterministic stationary velocity vectors are denoted by: {\protect\tikz \protect\draw[color=intro_color4,fill=intro_color1] (0,0) circle (.5ex);}, {\protect\tikz \protect\draw[color=intro_color4,fill=intro_color2] (0,0) circle (.5ex);}, and {\protect\tikz \protect\draw[color=intro_color4,fill=intro_color3] (0,0) circle (.5ex);} for $p=1.5$, $p=2$, and $p=3$, respectively.}
    \label{fig:varpoint-hist}
    \end{subfigure}
    \vspace{-1em}
    \caption{Statistics of velocity vectors at fixed spatial location for EXP-1.}
    \label{fig:varpoint-total}
\end{figure}

In EXP-2, transport noise enlarges the corner vortices to the point where they resemble the corner vortices known for the deterministic Navier-Stokes equations at high Reynolds numbers ($\mathrm{Re}$); compare Figure~\ref{fig:lidStream} with~\cite[{Figure~2 where $\mathrm{Re}= 1,000$}]{Erturk2005}. This is remarkable since, even though the stochastic system is forced by only one randomly-scaled vortex (in classical turbulence models one would have infinitely many), the solutions' streamlines for the (linear if $p=2$) stochastic generalised Stokes and non-linear deterministic generalised Navier--Stokes systems are comparable. While the Stokes system also displays corner vortices (see, e.g.,~\cite[Figures~12]{Carstensen2012}), their size is substantially smaller. In our simulations, the spatial resolution doesn't resolve them. For both stochastic and deterministic dynamics, modification of the growth rate changes the location and shape of the central vortex; the origin of rotation moves farther away from the lid as the growth rate increases. Interestingly, the most uncertain regions of the stochastic velocity field are close to the lid of the container and the container's upper part of the lateral boundary; see Figure~\ref{fig:lidDev}. The intensity of the random effects decrease as the growth rate decreases, which can be seen by, e.g., the size of the supports of the kinetic energies' stationary distributions and the statistics of the velocity vector; see Figures~\ref{fig:lidkin-hist} and~\ref{fig:lidpoint-total}, respectively.

\begin{figure}
    \centering
    \begin{subfigure}{1\textwidth}
    \includegraphics[width=0.5\linewidth]{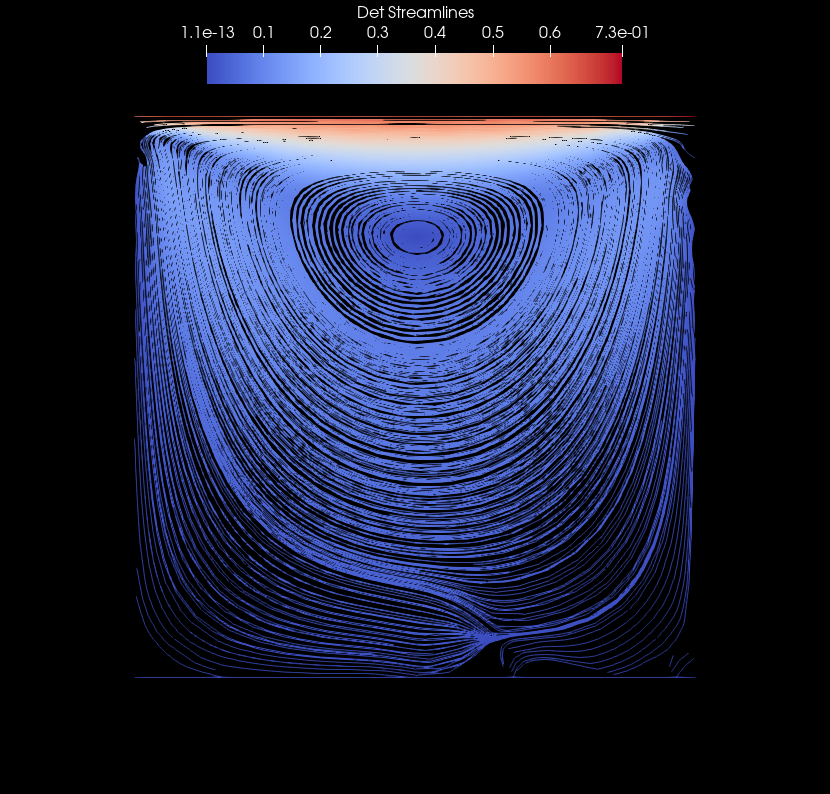}%
    \includegraphics[width=0.5\linewidth]{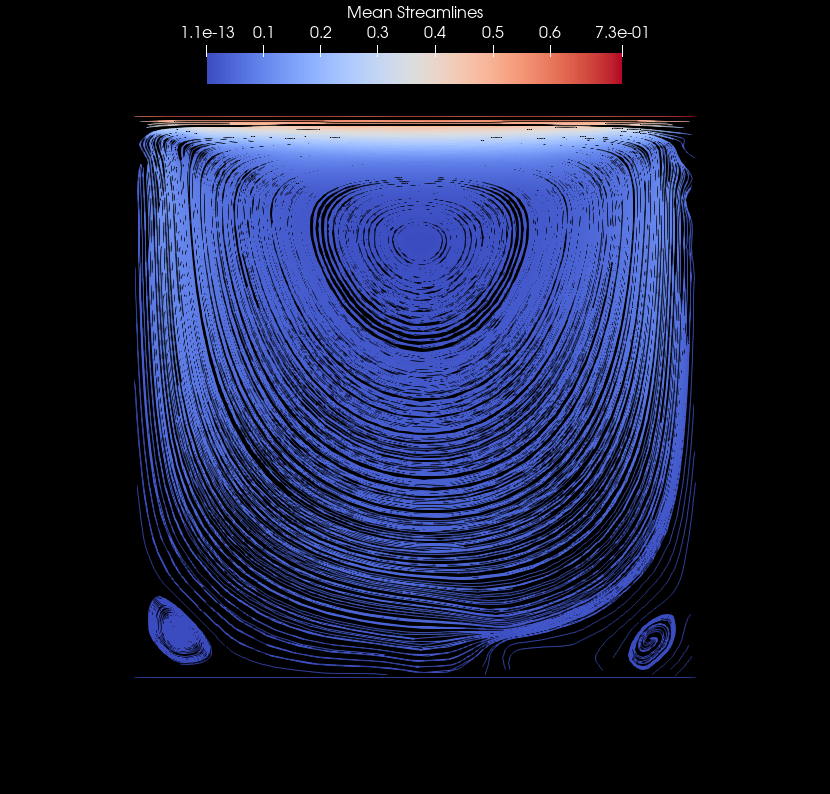}
    \caption{Streamlines for lid-driven cavity experiment with viscous growth rate $p=1.5$.}
    \label{fig:lidStream-smallP}
    \end{subfigure}
    \begin{subfigure}{1\textwidth}
    \includegraphics[width=0.5\linewidth]{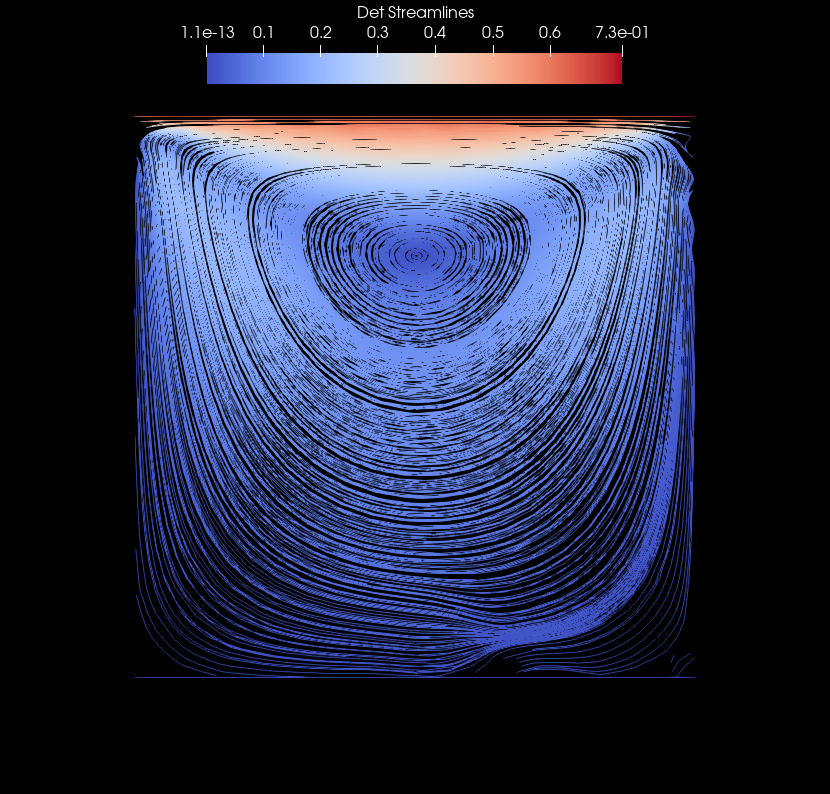}%
    \includegraphics[width=0.5\linewidth]{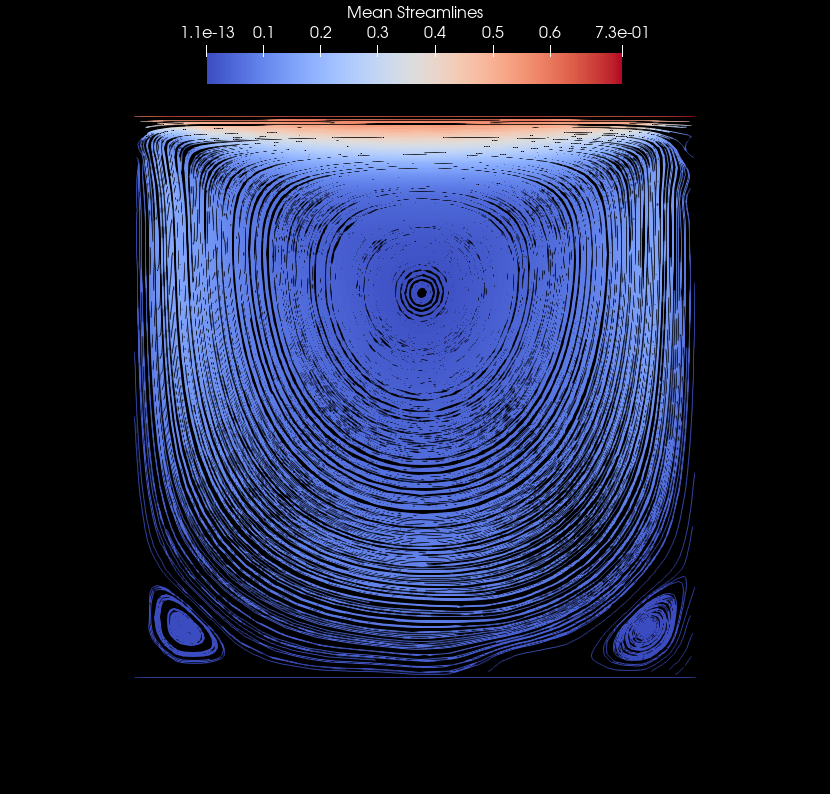}
    \caption{Streamlines for lid-driven cavity experiment with viscous growth rate $p=2$.}
    \label{fig:lidStream-middleP}
    \end{subfigure}
    \begin{subfigure}{1\textwidth}
    \includegraphics[width=0.5\linewidth]{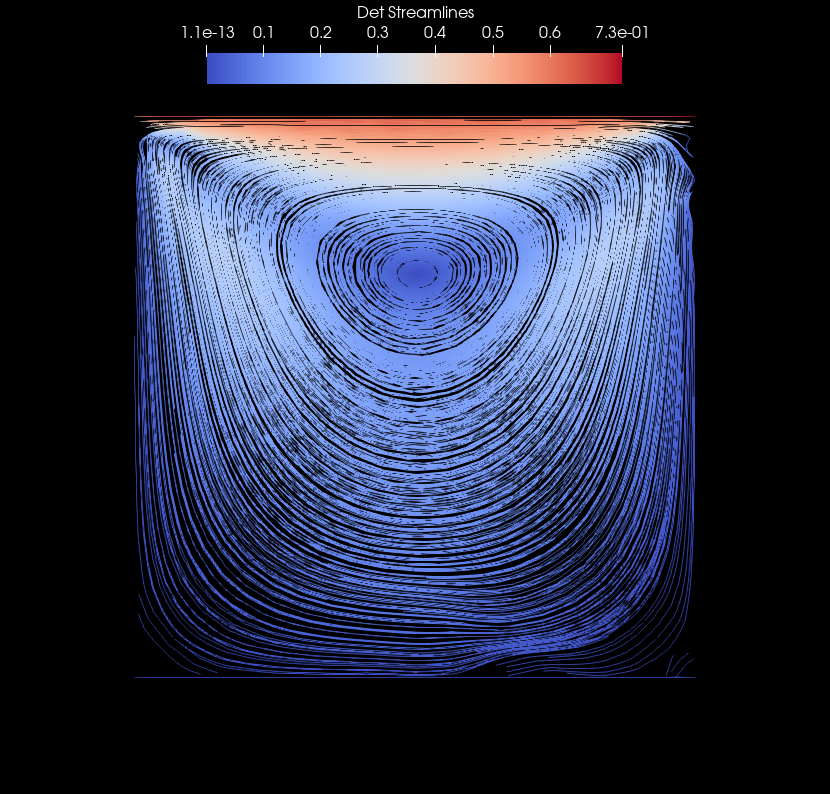}%
    \includegraphics[width=0.5\linewidth]{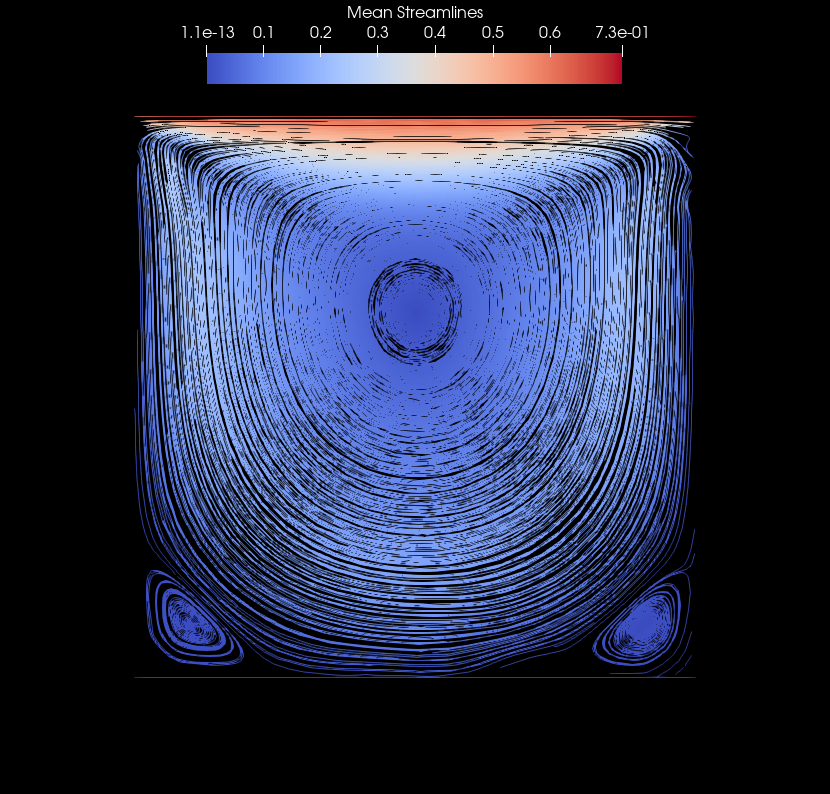}
    \caption{Streamlines for lid-driven cavity experiment with viscous growth rate $p=3$.}
    \label{fig:lidStream-bigP}
    \end{subfigure}
    \caption{Streamlines for lid-driven cavity experiments with varying viscous growth rates; left: streamlines of deterministic dynamics; right: mean (based on 1,000 trajectories) streamlines of stochastic dynamics. Colour encodes the velocity magnitude.}
    \label{fig:lidStream}
\end{figure}

\begin{figure}
    \centering
    \begin{subfigure}{1\textwidth}
    \includegraphics[width=0.5\linewidth]{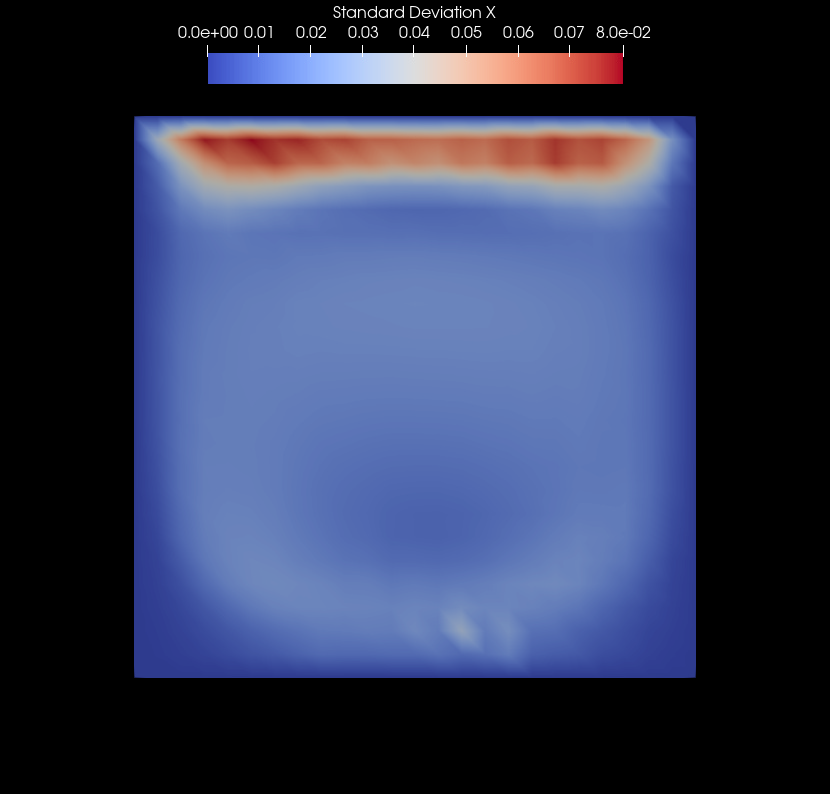}%
    \includegraphics[width=0.5\linewidth]{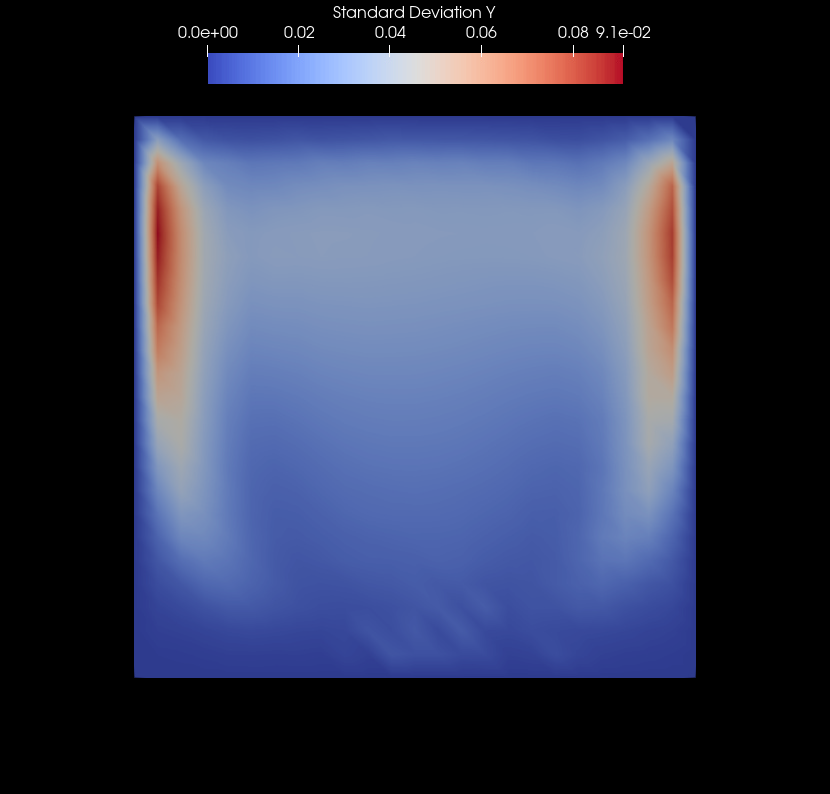}
    \caption{SD of velocity for lid-driven cavity experiment with viscous growth rate $p=1.5$.}
    \label{fig:lidDev-smallP}
    \end{subfigure}
    \begin{subfigure}{1\textwidth}
    \includegraphics[width=0.5\linewidth]{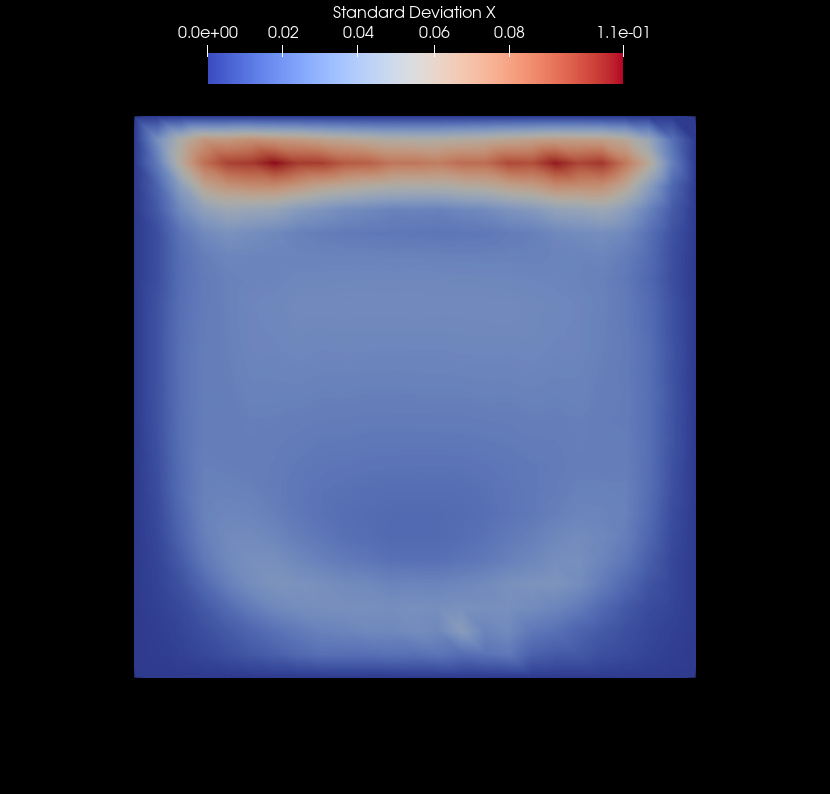}%
    \includegraphics[width=0.5\linewidth]{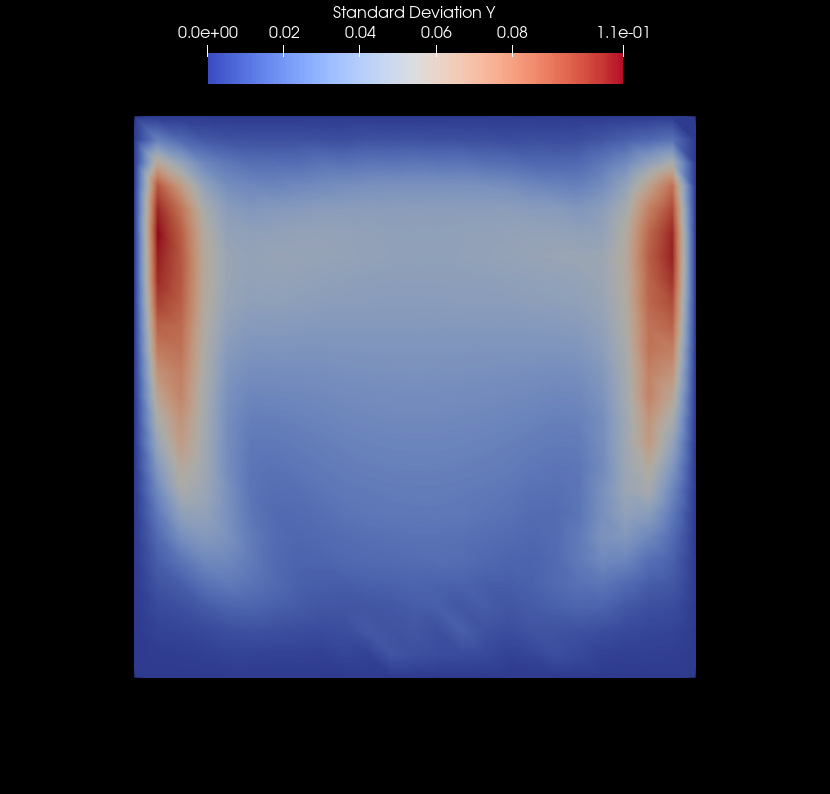}
    \caption{SD of velocity for lid-driven cavity experiment with viscous growth rate $p=2$.}
    \label{fig:lidDev-middleP}
    \end{subfigure}
    \begin{subfigure}{1\textwidth}
    \includegraphics[width=0.5\linewidth]{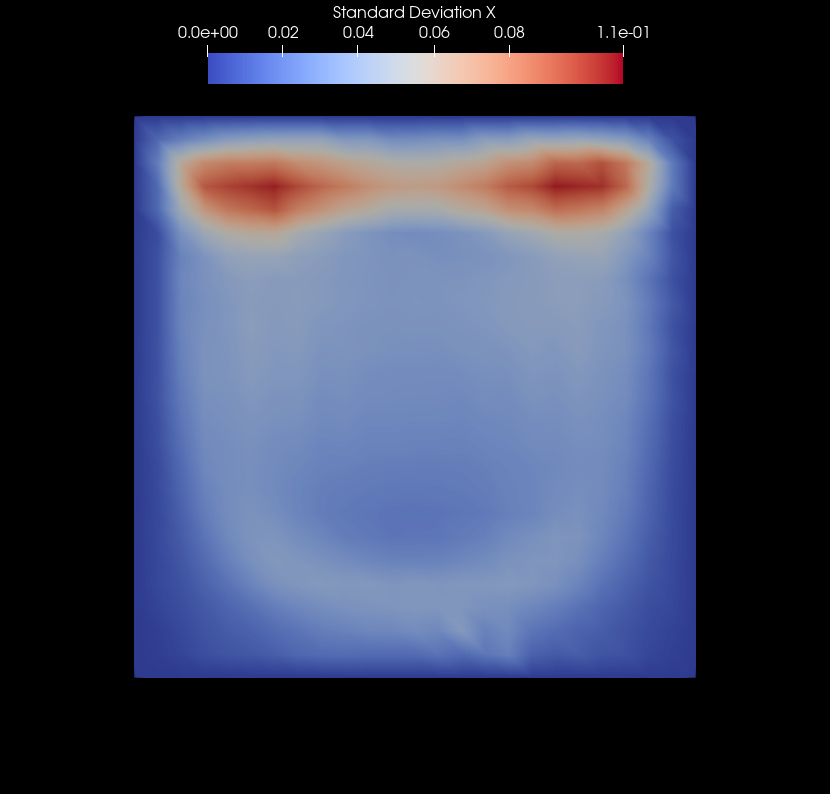}%
    \includegraphics[width=0.5\linewidth]{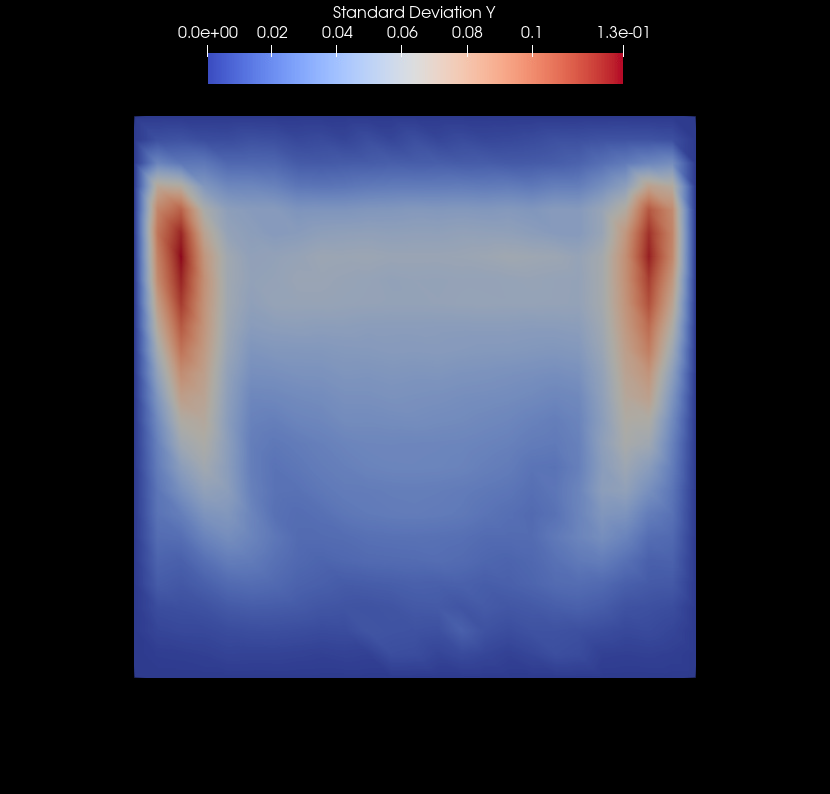}
    \caption{SD of velocity for lid-driven cavity experiment with viscous growth rate $p=3$.}
    \label{fig:lidDev-bigP}
    \end{subfigure}
    \caption{Standard deviation (SD) of velocity for lid-driven cavity experiments with varying viscous growth rates; left: $x$-component; right: $y$-component. Colour encodes the magnitude; its scaling changes from figure to figure.}
    \label{fig:lidDev}
\end{figure}

\begin{figure}
    \centering
    \begin{subfigure}{1\textwidth}
    \includegraphics[width=1\linewidth]{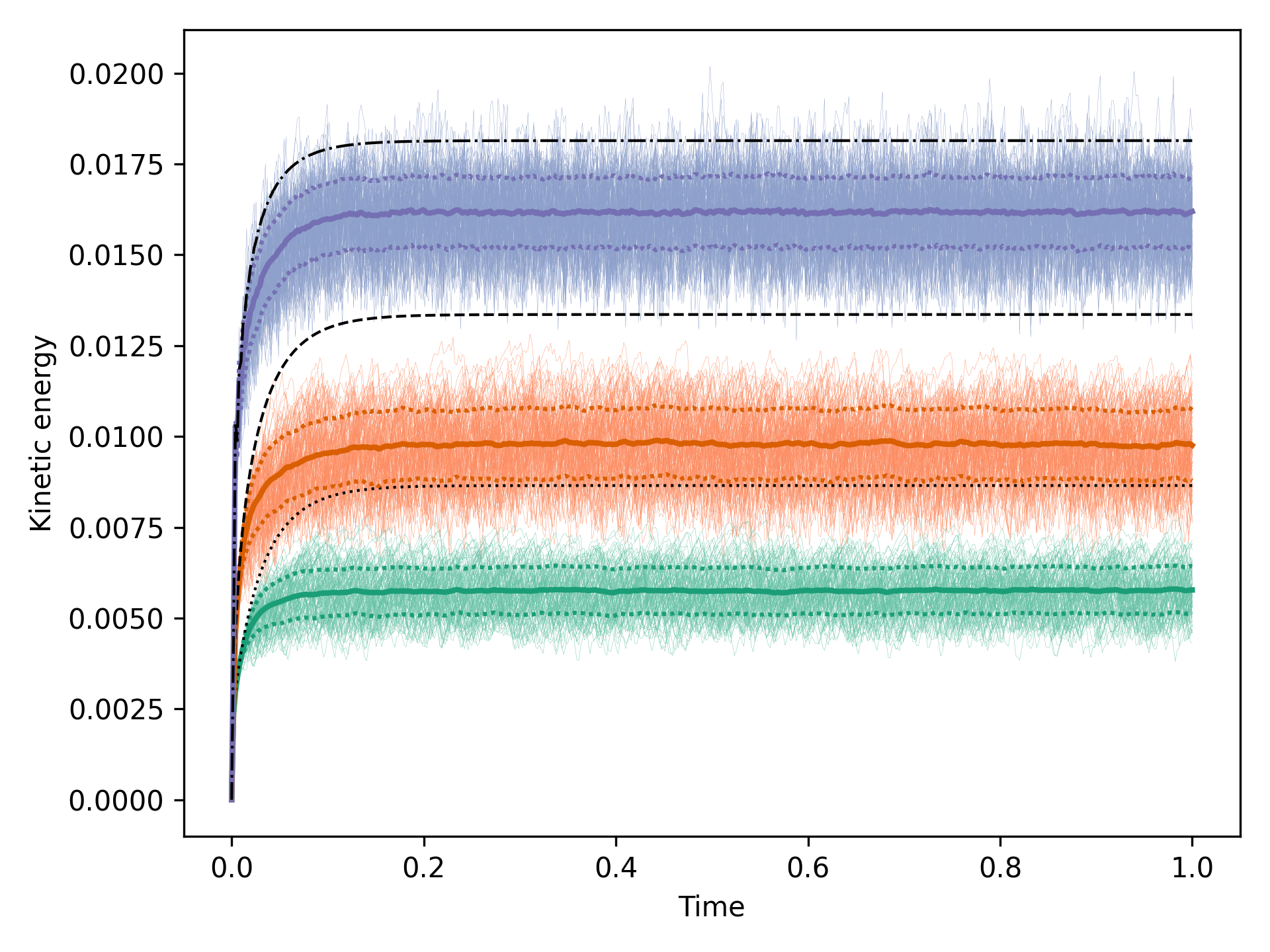}
    \vspace{-2em}
    \caption{Time evolution of kinetic energy: $\tau n \mapsto \tfrac{1}{2}\norm{\Pi_\disc v_\disc^n}_{L^2(\mathcal{O})}^2$, for lid-driven cavity experiments with viscous growth rates: $p=1.5$~(sto:~{\protect\tikz \protect\draw[color=intro_color1, line width=2] (0,0) -- (0.5,0);}; det:~{\protect\tikz \protect\draw[color=intro_color4, line width=1,dotted] (0,0) -- (0.5,0);}), $p=2$~(sto:~{\protect\tikz \protect\draw[color=intro_color2, line width=2] (0,0) -- (0.5,0);}; det:~{\protect\tikz \protect\draw[color=intro_color4, line width=1,dashed] (0,0) -- (0.5,0);}), and $p=3$~(sto:~{\protect\tikz \protect\draw[color=intro_color3, line width=2] (0,0) -- (0.5,0);}; det:~{\protect\tikz \protect\draw[color=intro_color4, line width=1,dashdotted] (0,0) -- (0.5,0);}). Thick lines and dotted lines show the mean energy and the mean energy plus or minus one standard deviation, respectively. The first 100 (out of 1,000) energy trajectories are shown in pale colours.}
    \label{fig:lidkin-traj}
    \end{subfigure}
    \begin{subfigure}{1\textwidth}
    \includegraphics[width=1\linewidth]{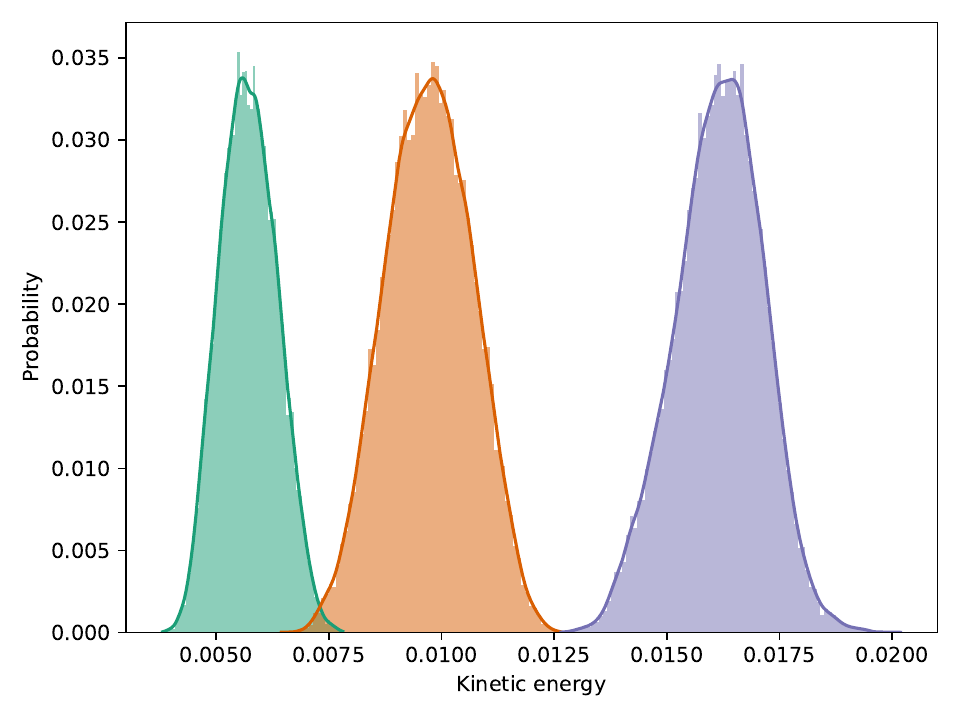}
    \vspace{-2em}
    \caption{Empirical approximation (based on 100 trajectories) of the stationary distributions of the kinetic energy for lid-driven cavity experiments with viscous growth rates: $p=1.5$~({\protect\tikz \protect\draw[color=intro_color1, line width=2] (0,0) -- (0.5,0);}), $p=2$~({\protect\tikz \protect\draw[color=intro_color2, line width=2] (0,0) -- (0.5,0);}), and $p=3$~({\protect\tikz \protect\draw[color=intro_color3, line width=2] (0,0) -- (0.5,0);}).}
    \label{fig:lidkin-hist}
    \end{subfigure}
    \vspace{-2em}
    \caption{Statistics of kinetic energy for EXP-2.}
    \label{fig:lidkin-total}
\end{figure}

\begin{figure}
    \centering
    \begin{subfigure}{1\textwidth}
    \centering
    \includegraphics[scale=0.625]{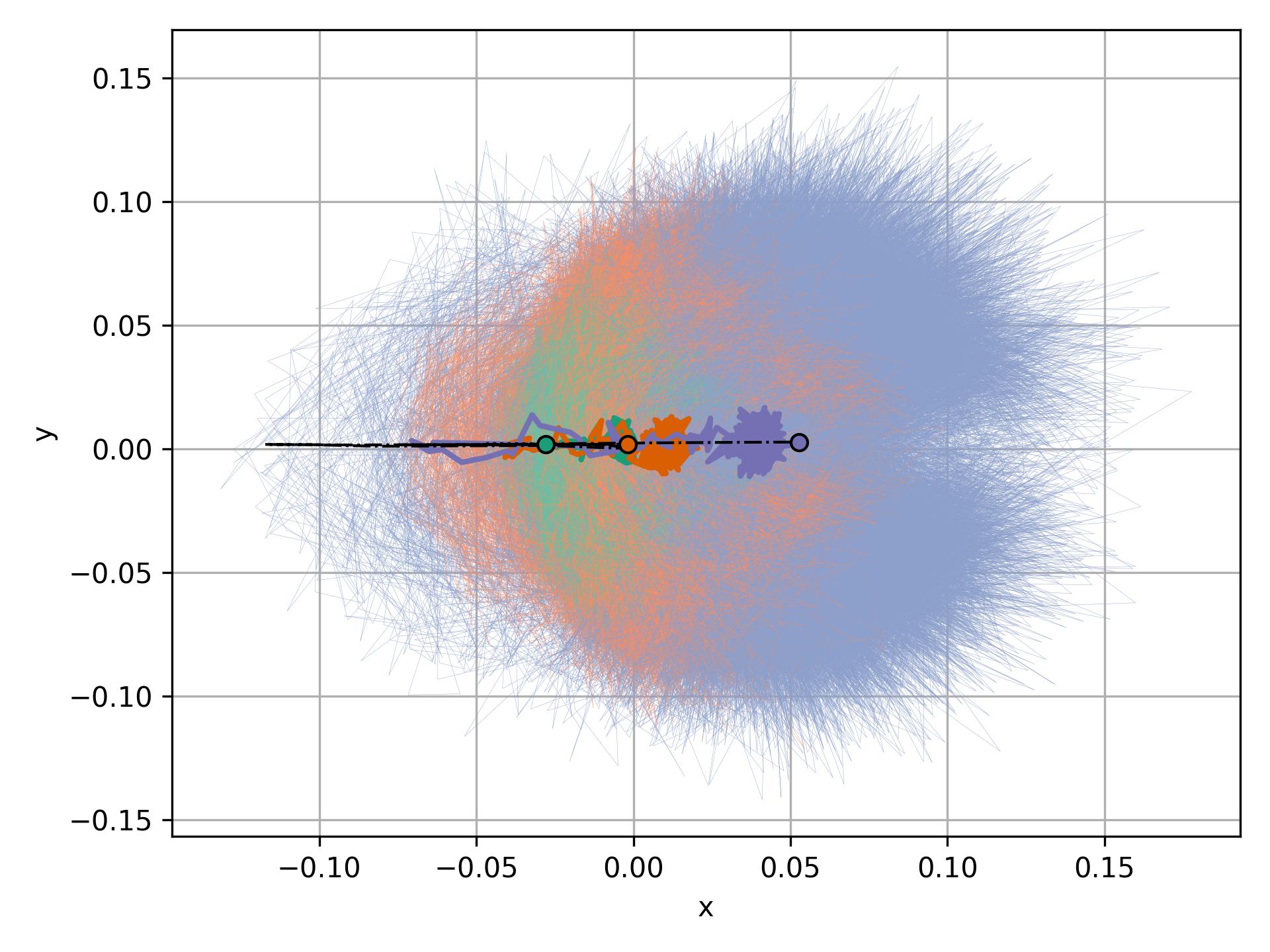}
    \vspace{-1em}
    \caption{Time evolution of velocity vector at the fixed spatial location $(x^*,y^*) = (0.5,0.75)$ for lid-driven cavity experiments with viscous growth rates: $p=1.5$~(sto:~{\protect\tikz \protect\draw[color=intro_color1, line width=2] (0,0) -- (0.5,0);}; det:~{\protect\tikz \protect\draw[color=intro_color4, line width=1,dotted] (0,0) -- (0.5,0);}), $p=2$~(sto:~{\protect\tikz \protect\draw[color=intro_color2, line width=2] (0,0) -- (0.5,0);}; det:~{\protect\tikz \protect\draw[color=intro_color4, line width=1,dashed] (0,0) -- (0.5,0);}), and $p=3$~(sto:~{\protect\tikz \protect\draw[color=intro_color3, line width=2] (0,0) -- (0.5,0);}; det:~{\protect\tikz \protect\draw[color=intro_color4, line width=1,dashdotted] (0,0) -- (0.5,0);}). Thick lines show the mean velocity vector. Individual trajectories are shown in pale colours. Deterministic stationary velocity vectors are denoted by: {\protect\tikz \protect\draw[color=intro_color4,fill=intro_color1] (0,0) circle (.5ex);}, {\protect\tikz \protect\draw[color=intro_color4,fill=intro_color2] (0,0) circle (.5ex);}, and {\protect\tikz \protect\draw[color=intro_color4,fill=intro_color3] (0,0) circle (.5ex);} for $p=1.5$, $p=2$, and $p=3$, respectively.}
    \label{fig:lidpoint-traj}
    \end{subfigure}
    \begin{subfigure}{1\textwidth}
    \centering
    \includegraphics[scale=0.625]{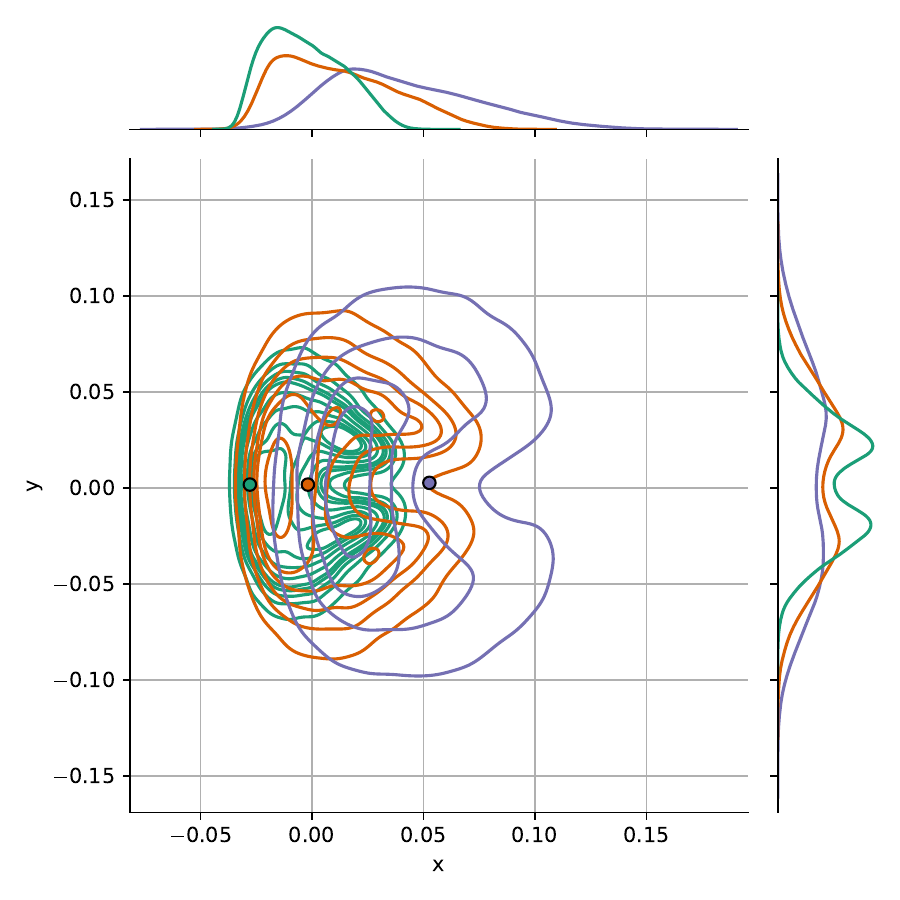}
     \vspace{-1em}
    \caption{Empirical approximation (based on 100 trajectories) of the stationary distributions of the velocity vector at the fixed spatial location $(x^*,y^*) = (0.5,0.75)$ for the lid-driven cavity experiments with viscous growth rates: $p=1.5$~({\protect\tikz \protect\draw[color=intro_color1, line width=2] (0,0) -- (0.5,0);}), $p=2$~({\protect\tikz \protect\draw[color=intro_color2, line width=2] (0,0) -- (0.5,0);}), and $p=3$~({\protect\tikz \protect\draw[color=intro_color3, line width=2] (0,0) -- (0.5,0);}). Deterministic stationary velocity vectors are denoted by: {\protect\tikz \protect\draw[color=intro_color4,fill=intro_color1] (0,0) circle (.5ex);}, {\protect\tikz \protect\draw[color=intro_color4,fill=intro_color2] (0,0) circle (.5ex);}, and {\protect\tikz \protect\draw[color=intro_color4,fill=intro_color3] (0,0) circle (.5ex);} for $p=1.5$, $p=2$, and $p=3$, respectively.}
    \label{fig:lidpoint-hist}
    \end{subfigure}
    \vspace{-1em}
    \caption{Statistics of velocity vectors at fixed spatial location for lid-driven cavity experiments.}
    \label{fig:lidpoint-total}
\end{figure}

\section{Conclusions} \label{sec:conclusion}
In this article, we proposed a new algorithm for non-Newtonian Stokes flows with gradient noise that builds on the classical Crank--Nicolson time-stepping algorithm in combination with a generic Gradient Discretisation. The Gradient Discretisation Method enabled us to derive a unified stability analysis of the algorithm for a broad class of particular spatial discretisations. The long-term stability further enabled the investigation of two sequences of approximate measures; these sequences of measures were found to be promising candidates for the construction of the invariant measure. At the moment, each sequence still lacks one important feature: either the existence of a limit measure, or the invariance with respect to the semigroup. We derived an abstract condition on the distance of consecutive velocity that enabled us to merge the desired properties of both sequences, recovering the existence of an invariant measure. We provided an example for which invariance and existence hold simultaneously, and characterised the invariant measure completely. Moreover, we investigated the abstract condition for more general cases numerically: we found that the condition won't be satisfied in general. We closed the article by conducting two thorough numerical experiments that show the influence of transport noise on the dynamics of power-law fluids; in particular, we found that transport noise enhances the dissipation of energy, the mixing of particles, as well as the size of vortices.  

All numerical experiments indicate that our algorithm gives rise to an invariant measure. But for general boundary conditions, a theoretical justification is still pending. While it would be desirable to understand this convergence on the purely discrete level, we are lacking an ansatz to proceed; an alternative approach, the topic of ongoing work, that leaves the purely discrete setting, is the limit passage to the time-continuous formulation. After the limit passage, the two time scales of the algorithm will be merged. We expect that the time-continuous formulation of the algorithm will have a unique invariant measure, which will be constructed by ergodic averages.  

Establishing quantified convergence results of the approximate measure towards the invariant measure requires further study of the weak and strong errors of our numerical algorithm. Nothing is currently known about this, which is mainly due to the limited regularity of the solutions of non-linear stochastic equations. The only result known to us that shows the convergence of the strong error for an approximation of the generalised Stokes system is~\cite{Le2024Spacetime}. However, the algorithm is limited to gradient-independent, multiplicative noises and is ill-prepared for the study of its transition operator since the time-stepping algorithm depends on two time steps. Whether some of the methods can be transferred to the algorithm proposed in this article remains to be investigated.

\appendix

\section{Implementation aspects of algorithm} \label{sec:algo}
In this appendix, we present pseudo-codes of our proposed algorithm, as well as its building blocks.

\subsection{Initialisation algorithm}
We propose the use of the discrete Helmholtz decomposition for the construction of well-prepared initial data: 
\begin{framed}
\begin{algorithm}[H]
\KwData{ 
\begin{itemize}
    \item Gradient Discretisation~$\disc$
    \item analytic velocity~$v^\mathrm{in}$
\end{itemize}
}
\KwResult{discrete velocity~$v^\mathrm{out}_\disc$ and pressure~$\pi^\mathrm{out}_\disc$}
Find $(v^\mathrm{out}_\disc,\pi^\mathrm{out}_\disc) \in X_{\disc,0} \times Y_{\disc,0}$ such that for all $(\xi, q) \in  X_{\disc,0} \times Y_{\disc,0}$:
\begin{subequations} 
\begin{alignat*}{2} 
&\left( \Pi_\disc v^\mathrm{out}_\disc, \Pi_\disc \xi \right) + \left( \chi_\disc \pi^\mathrm{out}_\disc, \Div_\disc \xi \right) &&= \left( v^\mathrm{in}, \Pi_\disc \xi \right), \\ 
 & \left(  \Div_\disc v^\mathrm{out}_\disc,\chi_\disc q \right) &&= 0.
\end{alignat*}
\end{subequations}
\caption{Discrete Helmholtz decomposition}
\label{algo:Initialisation}
\end{algorithm}
\end{framed}

\subsection{Time-propagation algorithm}
We propose the use of a concrete gradient discretisation in combination with a semi-implicit time-stepping scheme: 
\begin{framed}
\begin{algorithm}[H]
\KwData{ 
\begin{itemize}
    \item Gradient Discretisation~$\disc$
    \item time step-size~$\tau$ 
    \item random update~$\Delta W$
    \item boundary condition~$g$
    \item noise coefficients~$\sigma$ and~$B_\disc$
    \item viscous stress tensor~$S$
    \item discrete input velocity~$v^\mathrm{in}_\disc$
    \item discrete input pressure~$\pi^\mathrm{in}_\disc$
\end{itemize}
}
\KwResult{time propagated discrete velocity~$v^\mathrm{out}_{\disc}$ and pressure~$\pi^\mathrm{out}_{\disc}$}
Find $(v^\mathrm{out}_{\disc}, \pi^\mathrm{out}_{\disc}) \in X_{\disc,0} \times Y_{\disc,0}$ such that for all $(\xi, q) \in  X_{\disc,0} \times Y_{\disc,0}$ and $\mathbb{P}$-a.s.
\begin{subequations} 
\begin{alignat*}{2}  
&\left( \Pi_\disc v^\mathrm{out}_{\disc} -\Pi_\disc v^\mathrm{in}_\disc, \Pi_\disc \xi \right) + \tau \left( S( \varepsilon_\disc v^{1/2}_\disc + \varepsilon g ), \varepsilon_\disc \xi \right) \\ \nonumber
&\hspace{2em} - \left( \chi_\disc \pi^\mathrm{out}_{\disc} - \chi_\disc \pi^\mathrm{in}_{\disc}, \Div_\disc \xi \right)\\  \nonumber
&\hspace{2em} - \left[ B_\disc( v^{1/2}_\disc, \xi)+ \left( (\sigma \cdot \nabla)g , \Pi_{\disc} \xi \right) \right] \Delta W &&= 0, \\ 
 & \left(  \Div_\disc v^{1/2}_\disc,\chi_\disc q \right) &&= 0,
\end{alignat*}
\end{subequations}
where $v^{1/2}_\disc := \tfrac{1}{2}\big( v^\mathrm{out}_{\disc} + v^\mathrm{in}_{\disc}\big)$.
\caption{GDM and Crank--Nicolson scheme}
\label{algo:local-Crank--NicolsonAndGDM}
\end{algorithm}
\end{framed}

\subsection{Global algorithm} Combining an initialisation algorithm and a local time-propagation algorithm, we define the time-global fully discrete algorithm as follows.
\begin{framed}
\begin{algorithm}[H]
\KwData{ 
\begin{itemize}
    \item Gradient Discretisation~$\disc$
    \item time step-size~$\tau$ 
    \item Wiener process~$W$
    \item boundary condition~$g$
    \item noise coefficients~$\sigma$ and~$B_\disc^\sigma$
    \item viscous stress tensor~$S$
    \item analytic initial velocity~$v^\mathrm{in}$
\end{itemize}
}
\KwResult{approximate velocity~$(v^n_\disc)_{n \in \mathbb{N}_0}$ and pressure~$(\pi^n_\disc)_{n \in \mathbb{N}_0}$}
Define $t_0 = 0$; \\
Construct $(v^0_\disc,\pi^0_\disc) \in X_{\disc,0} \times Y_{\disc,0}$ via Algorithm~\ref{algo:Initialisation}; \\
\For{$n = 1,2, \ldots $ }{
Define $t_n = n \tau$ and $\Delta_n W = W(t_n) - W(t_{n-1})$;\\
Construct $(v^n_\disc,\pi^n_\disc) \in X_{\disc,0} \times Y_{\disc,0}$ via Algorithm~\ref{algo:local-Crank--NicolsonAndGDM} applied to $v^{\mathrm{in}}_\disc = v_\disc^{n-1}$, $\pi^{\mathrm{in}}_\disc = \pi_\disc^{n-1}$ and $\Delta W = \Delta_n W$;
}
\caption{GDM and Crank--Nicolson algorithm initialised using the discrete Helmholtz decomposition}
\label{algo:Crank--NicolsonAndGDM}
\end{algorithm}
\end{framed}

\printbibliography 

@Article{MR4091593,
  author     = {Diening, L. and Fornasier, M. and Tomasi, R. and Wank, M.},
  journal    = {Numer. Math.},
  title      = {A relaxed {K}a\v{c}anov iteration for the {$p$}-{P}oisson problem},
  year       = {2020},
  issn       = {0029-599X,0945-3245},
  number     = {1},
  pages      = {1--34},
  volume     = {145},
  doi        = {10.1007/s00211-020-01107-1},
  fjournal   = {Numerische Mathematik},
  mrclass    = {65N30 (35J70 35J92 65N12 65N22)},
  mrnumber   = {4091593},
  mrreviewer = {Michael\ Neilan},
  url        = {https://doi.org/10.1007/s00211-020-01107-1},
}

@Article{MR3607728,
  author   = {Emmrich, Etienne and \v{S}i\v{s}ka, David},
  journal  = {Stoch. Partial Differ. Equ. Anal. Comput.},
  title    = {Nonlinear stochastic evolution equations of second order with damping},
  year     = {2017},
  issn     = {2194-0401},
  number   = {1},
  pages    = {81--112},
  volume   = {5},
  doi      = {10.1007/s40072-016-0082-1},
  fjournal = {Stochastic Partial Differential Equations. Analysis and Computations},
  mrclass  = {60H15 (47J35 60H35 65J08 65M75)},
  mrnumber = {3607728},
  url      = {https://doi.org/10.1007/s40072-016-0082-1},
}

@book{Droniou2018,
  title = {The Gradient Discretisation Method},
  ISBN = {9783319790428},
  ISSN = {2198-3275},
  url = {http://dx.doi.org/10.1007/978-3-319-79042-8},
  DOI = {10.1007/978-3-319-79042-8},
  journal = {Mathématiques et Applications},
  publisher = {Springer International Publishing},
  author = {Droniou,  Jér\^ome and Eymard,  Robert and Gallouët,  Thierry and Guichard,  Cindy and Herbin,  Raphaèle},
  year = {2018}
}

@article{Droniou2015,
  title = {Gradient Schemes for Stokes problem},
  volume = {36},
  ISSN = {1464-3642},
  url = {http://dx.doi.org/10.1093/imanum/drv061},
  DOI = {10.1093/imanum/drv061},
  number = {4},
  journal = {IMA Journal of Numerical Analysis},
  publisher = {Oxford University Press (OUP)},
  author = {Droniou,  Jér\^ome and Eymard,  Robert and Feron,  Pierre},
  year = {2015},
  month = nov,
  pages = {1636–1669}
}

@book{Ern2004,
  title = {Theory and Practice of Finite Elements},
  ISBN = {9781475743555},
  ISSN = {0066-5452},
  url = {http://dx.doi.org/10.1007/978-1-4757-4355-5},
  DOI = {10.1007/978-1-4757-4355-5},
  journal = {Applied Mathematical Sciences},
  publisher = {Springer New York},
  author = {Ern,  Alexandre and Guermond,  Jean-Luc},
  year = {2004}
}

@article{Diening2022Averaged,
  title = {An averaged space–time discretization of the stochastic p-Laplace system},
  volume = {153},
  ISSN = {0945-3245},
  url = {http://dx.doi.org/10.1007/s00211-022-01343-7},
  DOI = {10.1007/s00211-022-01343-7},
  number = {2–3},
  journal = {Numerische Mathematik},
  publisher = {Springer Science and Business Media LLC},
  author = {Diening,  Lars and Hofmanová,  Martina and Wichmann,  J\"{o}rn},
  year = {2022},
  month = dec,
  pages = {557–609}
}

@article {DieE08,
    AUTHOR = {Diening, Lars and Ettwein, Frank},
     TITLE = {Fractional estimates for non-differentiable elliptic systems
              with general growth},
   JOURNAL = {Forum Math.},
  FJOURNAL = {Forum Mathematicum},
    VOLUME = {20},
      YEAR = {2008},
    NUMBER = {3},
     PAGES = {523--556},
      ISSN = {0933-7741},
   MRCLASS = {35J55 (35D10 35J60)},
  MRNUMBER = {2418205},
MRREVIEWER = {Eugen Viszus},
       DOI = {10.1515/FORUM.2008.027},
       URL = {http://dx.doi.org/10.1515/FORUM.2008.027},
}

@article{GlattHoltz2024,
  title = {Long-term accuracy of numerical approximations of SPDEs with the stochastic Navier–Stokes equations as a paradigm},
  ISSN = {1464-3642},
  url = {http://dx.doi.org/10.1093/imanum/drae043},
  DOI = {10.1093/imanum/drae043},
  journal = {IMA Journal of Numerical Analysis},
  publisher = {Oxford University Press (OUP)},
  author = {Glatt-Holtz,  Nathan E and Mondaini,  Cecilia F},
  year = {2024},
  month = jul 
}

@article{Temam1968,
  title = {Une méthode d’approximation de la solution des équations de Navier-Stokes},
  volume = {79},
  ISSN = {2102-622X},
  url = {http://dx.doi.org/10.24033/bsmf.1662},
  DOI = {10.24033/bsmf.1662},
  journal = {Bulletin de la S. M. F.},
  publisher = {Societe Mathematique de France},
  author = {Temam,  R.},
  year = {1968},
  pages = {115–152}
}

@article{Crank1947,
  title = {A practical method for numerical evaluation of solutions of partial differential equations of the heat-conduction type},
  volume = {43},
  ISSN = {1469-8064},
  url = {http://dx.doi.org/10.1017/S0305004100023197},
  DOI = {10.1017/s0305004100023197},
  number = {1},
  journal = {Mathematical Proceedings of the Cambridge Philosophical Society},
  publisher = {Cambridge University Press (CUP)},
  author = {Crank,  J. and Nicolson,  P.},
  year = {1947},
  month = jan,
  pages = {50–67}
}

@article{Le2024Spacetime,
  title = {A class of space–time discretizations for the stochastic p-Stokes system},
  volume = {177},
  ISSN = {0304-4149},
  url = {http://dx.doi.org/10.1016/j.spa.2024.104443},
  DOI = {10.1016/j.spa.2024.104443},
  journal = {Stochastic Processes and their Applications},
  publisher = {Elsevier BV},
  author = {Le,  Kim-Ngan and Wichmann,  J\"{o}rn},
  year = {2024},
  month = nov,
  pages = {104443}
}

@Article{MR4261330,
  author   = {Ba\v{n}as, \v{L}ubom\'{\i}r and R\"{o}ckner, Michael and Wilke, Andr\'{e}},
  journal  = {Stoch. Partial Differ. Equ. Anal. Comput.},
  title    = {Convergent numerical approximation of the stochastic total variation flow},
  year     = {2021},
  issn     = {2194-0401},
  number   = {2},
  pages    = {437--471},
  volume   = {9},
  doi      = {10.1007/s40072-020-00169-4},
  fjournal = {Stochastic Partial Differential Equations. Analysis and Computations},
  mrclass  = {46N10 (35R60 47J20 60H30)},
  mrnumber = {4261330},
  url      = {https://doi.org/10.1007/s40072-020-00169-4},
}

@article{Carstensen2012,
  title = {Numerical Experiments for the Arnold--Winther Mixed Finite Elements for the Stokes Problem},
  volume = {34},
  ISSN = {1095-7197},
  url = {http://dx.doi.org/10.1137/100802906},
  DOI = {10.1137/100802906},
  number = {4},
  journal = {SIAM Journal on Scientific Computing},
  publisher = {Society for Industrial & Applied Mathematics (SIAM)},
  author = {Carstensen,  Carsten and Gedicke,  Joscha and Park,  Eun-Jae},
  year = {2012},
  month = jan,
  pages = {A2267–A2287}
}

@misc{breit2024means,
      title={Mean Square Temporal error estimates for the 2D stochastic Navier-Stokes equations with transport noise}, 
      author={Dominic Breit and Thamsanqa Castern Moyo and Andreas Prohl and Jörn Wichmann},
      year={2024},
      eprint={2305.10999},
      archivePrefix={arXiv},
      primaryClass={math.NA},
      url={https://arxiv.org/abs/2305.10999}, 
}

@article{kuhlmann2019lid,
  title={The lid-driven cavity},
  author={Kuhlmann, Hendrik C and Roman{\`o}, Francesco},
  journal={Computational Modelling of Bifurcations and Instabilities in Fluid Dynamics},
  pages={233--309},
  year={2019},
  publisher={Springer}
}

@article{KovcsLang2020,
  title = {Weak convergence of fully discrete finite element approximations of semilinear hyperbolic SPDE with additive noise},
  volume = {54},
  ISSN = {1290-3841},
  url = {http://dx.doi.org/10.1051/m2an/2020012},
  DOI = {10.1051/m2an/2020012},
  number = {6},
  journal = {ESAIM: Mathematical Modelling and Numerical Analysis},
  publisher = {EDP Sciences},
  author = {Kovács,  Mihály and Lang,  Annika and Petersson,  Andreas},
  year = {2020},
  month = nov,
  pages = {2199–2227}
}

@Manual{FiredrakeUserManual,
  title        = {Firedrake User Manual},
  author       = {David A. Ham and Paul H. J. Kelly and Lawrence Mitchell and Colin J. Cotter and Robert C. Kirby and Koki Sagiyama and Nacime Bouziani and Sophia Vorderwuelbecke and Thomas J. Gregory and Jack Betteridge and Daniel R. Shapero and Reuben W. Nixon-Hill and Connor J. Ward and Patrick E. Farrell and Pablo D. Brubeck and India Marsden and Thomas H. Gibson and Miklós Homolya and Tianjiao Sun and Andrew T. T. McRae and Fabio Luporini and Alastair Gregory and Michael Lange and Simon W. Funke and Florian Rathgeber and Gheorghe-Teodor Bercea and Graham R. Markall},
  edition      = {First edition},
  month        = {5},
  organization = {Imperial College London and University of Oxford and Baylor University and University of Washington},
  year         = {2023},
  doi          = {10.25561/104839},
}

@Misc{petsc-web-page,
  author       = {Satish Balay and Shrirang Abhyankar and Mark~F. Adams and Steven Benson and Jed Brown and Peter Brune and Kris Buschelman and Emil~M. Constantinescu and Lisandro Dalcin and Alp Dener and Victor Eijkhout and Jacob Faibussowitsch and William~D. Gropp and V\'{a}clav Hapla and Tobin Isaac and Pierre Jolivet and Dmitry Karpeev and Dinesh Kaushik and Matthew~G. Knepley and Fande Kong and Scott Kruger and Dave~A. May and Lois Curfman McInnes and Richard Tran Mills and Lawrence Mitchell and Todd Munson and Jose~E. Roman and Karl Rupp and Patrick Sanan and Jason Sarich and Barry~F. Smith and Stefano Zampini and Hong Zhang and Hong Zhang and Junchao Zhang},
  howpublished = {\url{https://petsc.org/}},
  title        = {{PETS}c {W}eb page},
  year         = {2024},
  url          = {https://petsc.org/},
}

@Article{Taylor1973,
  author    = {Taylor, C. and Hood, P.},
  journal   = {Computers \& Fluids},
  title     = {A numerical solution of the Navier-Stokes equations using the finite element technique},
  year      = {1973},
  issn      = {0045-7930},
  month     = jan,
  number    = {1},
  pages     = {73–100},
  volume    = {1},
  doi       = {10.1016/0045-7930(73)90027-3},
  publisher = {Elsevier BV},
  url       = {http://dx.doi.org/10.1016/0045-7930(73)90027-3},
}

@article{Botti2019,
  title = {A Hybrid High-Order method for the incompressible Navier–Stokes equations based on Temam’s device},
  volume = {376},
  ISSN = {0021-9991},
  url = {http://dx.doi.org/10.1016/j.jcp.2018.10.014},
  DOI = {10.1016/j.jcp.2018.10.014},
  journal = {Journal of Computational Physics},
  publisher = {Elsevier BV},
  author = {Botti,  Lorenzo and Di Pietro,  Daniele A. and Droniou,  Jér\^ome},
  year = {2019},
  month = jan,
  pages = {786–816}
}

@article{Erturk2005,
  title = {Numerical solutions of 2-D steady incompressible driven cavity flow at high Reynolds numbers},
  volume = {48},
  ISSN = {1097-0363},
  url = {http://dx.doi.org/10.1002/fld.953},
  DOI = {10.1002/fld.953},
  number = {7},
  journal = {International Journal for Numerical Methods in Fluids},
  publisher = {Wiley},
  author = {Erturk,  E. and Corke,  T. C. and G\"{o}k\c{c}\"{o}l,  C.},
  year = {2005},
  pages = {747–774}
}

@article{Zhu2020,
  title = {Vortex dynamics and flow patterns in a two-dimensional oscillatory lid-driven rectangular cavity},
  volume = {79},
  ISSN = {0997-7546},
  url = {http://dx.doi.org/10.1016/j.euromechflu.2019.09.013},
  DOI = {10.1016/j.euromechflu.2019.09.013},
  journal = {European Journal of Mechanics - B/Fluids},
  publisher = {Elsevier BV},
  author = {Zhu,  Jianxun and Holmedal,  Lars Erik and Wang,  Hong and Myrhaug,  Dag},
  year = {2020},
  month = jan,
  pages = {255–269}
}

@article{Flandoli2022,
  title = {From additive to transport noise in 2D fluid dynamics},
  volume = {10},
  ISSN = {2194-041X},
  url = {http://dx.doi.org/10.1007/s40072-022-00249-7},
  DOI = {10.1007/s40072-022-00249-7},
  number = {3},
  journal = {Stochastics and Partial Differential Equations: Analysis and Computations},
  publisher = {Springer Science and Business Media LLC},
  author = {Flandoli,  Franco and Pappalettera,  Umberto},
  year = {2022},
  month = apr,
  pages = {964–1004}
}

@book{Flandoli2023,
  title = {Stochastic Partial Differential Equations in Fluid Mechanics},
  ISBN = {9789819903856},
  ISSN = {1617-9692},
  url = {http://dx.doi.org/10.1007/978-981-99-0385-6},
  DOI = {10.1007/978-981-99-0385-6},
  journal = {Lecture Notes in Mathematics},
  publisher = {Springer Nature Singapore},
  author = {Flandoli,  Franco and Luongo,  Eliseo},
  year = {2023}
}

@article{Wang2010,
  title = {Application of polynomial chaos on numerical simulation of stochastic cavity flow},
  volume = {53},
  ISSN = {1862-281X},
  url = {http://dx.doi.org/10.1007/s11431-010-4097-y},
  DOI = {10.1007/s11431-010-4097-y},
  number = {10},
  journal = {Science China Technological Sciences},
  publisher = {Springer Science and Business Media LLC},
  author = {Wang,  XiaoDong and Kang,  Shun},
  year = {2010},
  month = sep,
  pages = {2853–2861}
}

@article{Diening2021,
  title = {On the Sobolev and $L^p$-Stability of the $L^2$-Projection},
  volume = {59},
  ISSN = {1095-7170},
  url = {http://dx.doi.org/10.1137/20M1358013},
  DOI = {10.1137/20m1358013},
  number = {5},
  journal = {SIAM Journal on Numerical Analysis},
  publisher = {Society for Industrial & Applied Mathematics (SIAM)},
  author = {Diening,  Lars and Storn,  Johannes and Tscherpel,  Tabea},
  year = {2021},
  month = jan,
  pages = {2571–2607}
}

@article{Boman2006,
  title = {Estimates for the L2-Projection onto Continuous Finite Element Spaces in a Weighted Lp-Norm},
  volume = {46},
  ISSN = {1572-9125},
  url = {http://dx.doi.org/10.1007/s10543-006-0062-3},
  DOI = {10.1007/s10543-006-0062-3},
  number = {2},
  journal = {BIT Numerical Mathematics},
  publisher = {Springer Science and Business Media LLC},
  author = {Boman,  Mats},
  year = {2006},
  month = jun,
  pages = {249–260}
}

@article{Bank2013,
  title = {On the ${H^1}$-stability of the ${L_2}$-projection onto finite element spaces},
  volume = {126},
  ISSN = {0945-3245},
  url = {http://dx.doi.org/10.1007/s00211-013-0562-4},
  DOI = {10.1007/s00211-013-0562-4},
  number = {2},
  journal = {Numerische Mathematik},
  publisher = {Springer Science and Business Media LLC},
  author = {Bank,  Randolph E. and Yserentant,  Harry},
  year = {2013},
  month = may,
  pages = {361–381}
}

@article{BENSOUSSAN1973195,
title = {Equations stochastiques du type Navier-Stokes},
journal = {Journal of Functional Analysis},
volume = {13},
number = {2},
pages = {195-222},
year = {1973},
issn = {0022-1236},
doi = {https://doi.org/10.1016/0022-1236(73)90045-1},
url = {https://www.sciencedirect.com/science/article/pii/0022123673900451},
author = {A Bensoussan and R Temam},
abstract = {We consider the problem of existence of solutions for stochastic Navier-Stokes equations. The random elements are the initial value of the solution and forcing terms which may be white noise in time. The method uses results on multi-valued functions. We also give energy type results and present examples.}
}

@article{Kraichnan_1959, 
title = {The structure of isotropic turbulence at very high Reynolds numbers}, 
volume = {5}, 
DOI = {10.1017/S0022112059000362}, 
number = {4},
journal = {Journal of Fluid Mechanics},
author = {Kraichnan, Robert H.}, 
year={1959}, 
pages={497–543}
}

@article{MikuRozov,
author = {Mikulevicius, R. and Rozovskii, B. L.},
title = {Stochastic Navier--Stokes Equations for Turbulent Flows},
journal = {SIAM Journal on Mathematical Analysis},
volume = {35},
number = {5},
pages = {1250-1310},
year = {2004},
doi = {10.1137/S0036141002409167},
eprint = {https://doi.org/10.1137/S0036141002409167}
}

@article{Chekroun2012,
  title = {Invariant Measures for Dissipative Dynamical Systems: Abstract Results and Applications},
  volume = {316},
  ISSN = {1432-0916},
  url = {http://dx.doi.org/10.1007/s00220-012-1515-y},
  DOI = {10.1007/s00220-012-1515-y},
  number = {3},
  journal = {Communications in Mathematical Physics},
  publisher = {Springer Science and Business Media LLC},
  author = {Chekroun,  Mickaël D. and Glatt-Holtz,  Nathan E.},
  year = {2012},
  month = jul,
  pages = {723–761}
}

@article{CotiZelati2019,
  title = {Invariant Measures for the Stochastic One-Dimensional Compressible Navier–Stokes Equations},
  volume = {83},
  ISSN = {1432-0606},
  url = {http://dx.doi.org/10.1007/s00245-019-09594-x},
  DOI = {10.1007/s00245-019-09594-x},
  number = {3},
  journal = {Applied Mathematics \& Optimization},
  publisher = {Springer Science and Business Media LLC},
  author = {Coti Zelati,  Michele and Glatt-Holtz,  Nathan and Trivisa,  Konstantina},
  year = {2019},
  month = jul,
  pages = {1487–1522}
}

@article{Flandoli1995,
  title = {Martingale and stationary solutions for stochastic Navier-Stokes equations},
  volume = {102},
  ISSN = {1432-2064},
  url = {http://dx.doi.org/10.1007/BF01192467},
  DOI = {10.1007/bf01192467},
  number = {3},
  journal = {Probability Theory and Related Fields},
  publisher = {Springer Science and Business Media LLC},
  author = {Flandoli,  Franco and Gatarek,  Dariusz},
  year = {1995},
  month = sep,
  pages = {367–391}
}

@article{Brzeniak2017,
  title = {Invariant measure for the stochastic Navier--Stokes equations in unbounded 2D domains},
  volume = {45},
  ISSN = {0091-1798},
  url = {http://dx.doi.org/10.1214/16-AOP1133},
  DOI = {10.1214/16-aop1133},
  number = {5},
  journal = {The Annals of Probability},
  publisher = {Institute of Mathematical Statistics},
  author = {Brzeźniak,  Zdzisław and Motyl,  Elżbieta and Ondrejat,  Martin},
  year = {2017},
  month = sep 
}

@article{Brhier2016,
  title = {Approximation of the invariant law of SPDEs: error analysis using a Poisson equation for a full-discretization scheme},
  ISSN = {1464-3642},
  url = {http://dx.doi.org/10.1093/imanum/drw030},
  DOI = {10.1093/imanum/drw030},
  journal = {IMA Journal of Numerical Analysis},
  publisher = {Oxford University Press (OUP)},
  author = {Bréhier,  Charles-Edouard and Kopec,  Marie},
  year = {2016},
  month = jul,
  pages = {dnw030}
}

@article{Brhier2013,
  title = {Approximation of the Invariant Measure with an Euler Scheme for Stochastic PDEs Driven by Space-Time White Noise},
  volume = {40},
  ISSN = {1572-929X},
  url = {http://dx.doi.org/10.1007/s11118-013-9338-9},
  DOI = {10.1007/s11118-013-9338-9},
  number = {1},
  journal = {Potential Analysis},
  publisher = {Springer Science and Business Media LLC},
  author = {Bréhier,  Charles-Edouard},
  year = {2013},
  month = mar,
  pages = {1–40}
}

@article{BrhierSample2016,
  title = {High Order Integrator for Sampling the Invariant Distribution of a Class of Parabolic Stochastic PDEs with Additive Space-Time Noise},
  volume = {38},
  ISSN = {1095-7197},
  url = {http://dx.doi.org/10.1137/15M1021088},
  DOI = {10.1137/15m1021088},
  number = {4},
  journal = {SIAM Journal on Scientific Computing},
  publisher = {Society for Industrial & Applied Mathematics (SIAM)},
  author = {Bréhier,  Charles-Edouard and Vilmart,  Gilles},
  year = {2016},
  month = jan,
  pages = {A2283–A2306}
}

@article{Brhier2024,
  title = {Total Variation Error Bounds for the Approximation of the Invariant Distribution of Parabolic Semilinear SPDEs Using the Standard Euler Scheme},
  ISSN = {1572-929X},
  url = {http://dx.doi.org/10.1007/s11118-024-10132-w},
  DOI = {10.1007/s11118-024-10132-w},
  journal = {Potential Analysis},
  publisher = {Springer Science and Business Media LLC},
  author = {Bréhier,  Charles-Edouard},
  year = {2024},
  month = mar 
}

@article{Brhier2022,
  title = {Approximation of the invariant distribution for a class of ergodic SPDEs using an explicit tamed exponential Euler scheme},
  volume = {56},
  ISSN = {2804-7214},
  url = {http://dx.doi.org/10.1051/m2an/2021089},
  DOI = {10.1051/m2an/2021089},
  number = {1},
  journal = {ESAIM: Mathematical Modelling and Numerical Analysis},
  publisher = {EDP Sciences},
  author = {Bréhier,  Charles-Edouard},
  year = {2022},
  month = jan,
  pages = {151–175}
}

@article{ChenWang2016,
  title = {Approximation of Invariant Measure for Damped Stochastic Nonlinear Schr\"{o}dinger Equation via an Ergodic Numerical Scheme},
  volume = {46},
  ISSN = {1572-929X},
  url = {http://dx.doi.org/10.1007/s11118-016-9583-9},
  DOI = {10.1007/s11118-016-9583-9},
  number = {2},
  journal = {Potential Analysis},
  publisher = {Springer Science and Business Media LLC},
  author = {Chen,  Chuchu and Hong,  Jialin and Wang,  Xu},
  year = {2016},
  month = aug,
  pages = {323–367}
}

@article{Chen2020,
  title = {A full-discrete exponential Euler approximation of the invariant measure for parabolic stochastic partial differential equations},
  volume = {157},
  ISSN = {0168-9274},
  url = {http://dx.doi.org/10.1016/j.apnum.2020.05.008},
  DOI = {10.1016/j.apnum.2020.05.008},
  journal = {Applied Numerical Mathematics},
  publisher = {Elsevier BV},
  author = {Chen,  Ziheng and Gan,  Siqing and Wang,  Xiaojie},
  year = {2020},
  month = nov,
  pages = {135–158}
}

@article{Wang2015,
  title = {Weak error estimates of the exponential Euler scheme for semi-linear SPDEs without Malliavin calculus},
  volume = {36},
  ISSN = {1078-0947},
  url = {http://dx.doi.org/10.3934/dcds.2016.36.481},
  DOI = {10.3934/dcds.2016.36.481},
  number = {1},
  journal = {Discrete and Continuous Dynamical Systems},
  publisher = {American Institute of Mathematical Sciences (AIMS)},
  author = {Wang,  Xiaojie},
  year = {2015},
  month = jun,
  pages = {481–497}
}

@article{Debussche2008,
  title = {Weak order for the discretization of the stochastic heat equation},
  volume = {78},
  ISSN = {0025-5718},
  url = {http://dx.doi.org/10.1090/S0025-5718-08-02184-4},
  DOI = {10.1090/s0025-5718-08-02184-4},
  number = {266},
  journal = {Mathematics of Computation},
  publisher = {American Mathematical Society (AMS)},
  author = {Debussche,  Arnaud and Printems,  Jacques},
  year = {2008},
  month = oct,
  pages = {845–863}
}

@book{Kruse2014,
  title = {Strong and Weak Approximation of Semilinear Stochastic Evolution Equations},
  ISBN = {9783319022314},
  ISSN = {1617-9692},
  url = {http://dx.doi.org/10.1007/978-3-319-02231-4},
  DOI = {10.1007/978-3-319-02231-4},
  journal = {Lecture Notes in Mathematics},
  publisher = {Springer International Publishing},
  author = {Kruse,  Raphael},
  year = {2014}
}

@article{Debussche2010,
  title = {Weak approximation of stochastic partial differential equations: the nonlinear case},
  volume = {80},
  ISSN = {1088-6842},
  url = {http://dx.doi.org/10.1090/S0025-5718-2010-02395-6},
  DOI = {10.1090/s0025-5718-2010-02395-6},
  number = {273},
  journal = {Mathematics of Computation},
  publisher = {American Mathematical Society (AMS)},
  author = {Debussche,  Arnaud},
  year = {2010},
  month = aug,
  pages = {89–117}
}

@article{Cui2021,
  title = {Weak convergence and invariant measure of a full discretization for parabolic SPDEs with non-globally Lipschitz coefficients},
  volume = {134},
  ISSN = {0304-4149},
  url = {http://dx.doi.org/10.1016/j.spa.2020.12.003},
  DOI = {10.1016/j.spa.2020.12.003},
  journal = {Stochastic Processes and their Applications},
  publisher = {Elsevier BV},
  author = {Cui,  Jianbo and Hong,  Jialin and Sun,  Liying},
  year = {2021},
  month = apr,
  pages = {55–93}
}

@article{Chen2023,
  title = {CLT for approximating ergodic limit of SPDEs via a full discretization},
  volume = {157},
  ISSN = {0304-4149},
  url = {http://dx.doi.org/10.1016/j.spa.2022.11.015},
  DOI = {10.1016/j.spa.2022.11.015},
  journal = {Stochastic Processes and their Applications},
  publisher = {Elsevier BV},
  author = {Chen,  Chuchu and Dang,  Tonghe and Hong,  Jialin and Zhou,  Tau},
  year = {2023},
  month = mar,
  pages = {1–41}
}

@article{Bonaldi2021,
  title = {Gradient discretization of two-phase poro-mechanical models with discontinuous pressures at matrix fracture interfaces},
  volume = {55},
  ISSN = {1290-3841},
  url = {http://dx.doi.org/10.1051/m2an/2021036},
  DOI = {10.1051/m2an/2021036},
  number = {5},
  journal = {ESAIM: Mathematical Modelling and Numerical Analysis},
  publisher = {EDP Sciences},
  author = {Bonaldi,  Francesco and Brenner,  Konstantin and Droniou,  Jér\^ome and Masson,  Roland and Pasteau,  Antoine and Trenty,  Laurent},
  year = {2021},
  month = sep,
  pages = {1741–1777}
}

@article{DroniouLe2020,
  title = {The Gradient Discretization Method for Slow and Fast Diffusion Porous Media Equations},
  volume = {58},
  ISSN = {1095-7170},
  url = {http://dx.doi.org/10.1137/19M1260165},
  DOI = {10.1137/19m1260165},
  number = {3},
  journal = {SIAM Journal on Numerical Analysis},
  publisher = {Society for Industrial & Applied Mathematics (SIAM)},
  author = {Droniou,  Jér\^ome and Le,  Kim-Ngan},
  year = {2020},
  month = jan,
  pages = {1965–1992}
}

@article{DroniouRobert2019,
  title = {The Gradient Discretisation Method for Linear Advection Problems},
  volume = {20},
  ISSN = {1609-9389},
  url = {http://dx.doi.org/10.1515/cmam-2019-0060},
  DOI = {10.1515/cmam-2019-0060},
  number = {3},
  journal = {Computational Methods in Applied Mathematics},
  publisher = {Walter de Gruyter GmbH},
  author = {Droniou,  Jér\^ome and Eymard,  Robert and Gallouët,  Thierry and Herbin,  Raphaèle},
  year = {2019},
  month = oct,
  pages = {437–458}
}

@article{Droniou2024,
  title = {Numerical analysis of the stochastic Stefan problem},
  volume = {173},
  ISSN = {0898-1221},
  url = {http://dx.doi.org/10.1016/j.camwa.2024.08.007},
  DOI = {10.1016/j.camwa.2024.08.007},
  journal = {Computers \& Mathematics with Applications},
  publisher = {Elsevier BV},
  author = {Droniou,  Jér\^ome and Khan,  Muhammad Awais and Le,  Kim-Ngan},
  year = {2024},
  month = nov,
  pages = {114–140}
}

@article{Jentzen2009,
  title = {The Numerical Approximation of Stochastic Partial Differential Equations},
  volume = {77},
  ISSN = {1424-9294},
  url = {http://dx.doi.org/10.1007/s00032-009-0100-0},
  DOI = {10.1007/s00032-009-0100-0},
  number = {1},
  journal = {Milan Journal of Mathematics},
  publisher = {Springer Science and Business Media LLC},
  author = {Jentzen,  A. and Kloeden,  P. E.},
  year = {2009},
  month = sep,
  pages = {205–244}
}

@article{Brzezniak2013,
  title = {Finite-element-based discretizations of the incompressible Navier-Stokes equations with multiplicative random forcing},
  volume = {33},
  ISSN = {1464-3642},
  url = {http://dx.doi.org/10.1093/imanum/drs032},
  DOI = {10.1093/imanum/drs032},
  number = {3},
  journal = {IMA Journal of Numerical Analysis},
  publisher = {Oxford University Press (OUP)},
  author = {Brzezniak,  Z. and Carelli,  E. and Prohl,  A.},
  year = {2013},
  month = jan,
  pages = {771–824}
}

@article{Bessaih2022,
  title = {Strong rates of convergence of space-time discretization schemes for the 2D Navier–Stokes equations with additive noise},
  volume = {22},
  ISSN = {1793-6799},
  url = {http://dx.doi.org/10.1142/S0219493722400056},
  DOI = {10.1142/s0219493722400056},
  number = {02},
  journal = {Stochastics and Dynamics},
  publisher = {World Scientific Pub Co Pte Ltd},
  author = {Bessaih,  Hakima and Millet,  Annie},
  year = {2022},
  month = jan 
}

@article{BreitDod2021,
  title = {Convergence rates for the numerical approximation of the 2D stochastic Navier–Stokes equations},
  volume = {147},
  ISSN = {0945-3245},
  url = {http://dx.doi.org/10.1007/s00211-021-01181-z},
  DOI = {10.1007/s00211-021-01181-z},
  number = {3},
  journal = {Numerische Mathematik},
  publisher = {Springer Science and Business Media LLC},
  author = {Breit,  Dominic and Dodgson,  Alan},
  year = {2021},
  month = feb,
  pages = {553–578}
}

@Article{MR4298537,
  author   = {Breit, Dominic and Hofmanov\'{a}, Martina and Loisel, S\'{e}bastien},
  journal  = {SIAM J. Numer. Anal.},
  title    = {Space-time approximation of stochastic {$p$}-{L}aplace-type systems},
  year     = {2021},
  issn     = {0036-1429},
  number   = {4},
  pages    = {2218--2236},
  volume   = {59},
  doi      = {10.1137/20M1334310},
  fjournal = {SIAM Journal on Numerical Analysis},
  mrclass  = {65M60 (35K55 35K65 60H15 65M15 65M75)},
  mrnumber = {4298537},
  url      = {https://doi.org/10.1137/20M1334310},
}

@article{Majee2017,
  title = {Optimal Strong Rates of Convergence for a Space-Time Discretization of
the Stochastic Allen–Cahn Equation with Multiplicative Noise},
  volume = {18},
  ISSN = {1609-4840},
  url = {http://dx.doi.org/10.1515/cmam-2017-0023},
  DOI = {10.1515/cmam-2017-0023},
  number = {2},
  journal = {Computational Methods in Applied Mathematics},
  publisher = {Walter de Gruyter GmbH},
  author = {Majee,  Ananta K. and Prohl,  Andreas},
  year = {2017},
  month = jul,
  pages = {297–311}
}

@Article{Wichmann2024,
  author    = {Wichmann, J\"{o}rn},
  journal   = {Journal of Mathematical Fluid Mechanics},
  title     = {Temporal Regularity of Symmetric Stochastic p-Stokes Systems},
  year      = {2024},
  issn      = {1422-6952},
  month     = feb,
  number    = {2},
  volume    = {26},
  doi       = {10.1007/s00021-024-00852-9},
  publisher = {Springer Science and Business Media LLC},
  url       = {http://dx.doi.org/10.1007/s00021-024-00852-9},
}

@article{Feng2022,
  title = {Analysis of Chorin-type projection methods for the stochastic Stokes equations with general multiplicative noise},
  volume = {11},
  ISSN = {2194-041X},
  url = {http://dx.doi.org/10.1007/s40072-021-00228-4},
  DOI = {10.1007/s40072-021-00228-4},
  number = {1},
  journal = {Stochastics and Partial Differential Equations: Analysis and Computations},
  publisher = {Springer Science and Business Media LLC},
  author = {Feng,  Xiaobing and Vo,  Liet},
  year = {2022},
  month = jan,
  pages = {269–306}
}

@article{Li2024,
  title = {Optimal analysis of finite element methods for the stochastic Stokes equations},
  ISSN = {1088-6842},
  url = {http://dx.doi.org/10.1090/mcom/3972},
  DOI = {10.1090/mcom/3972},
  journal = {Mathematics of Computation},
  publisher = {American Mathematical Society (AMS)},
  author = {Li,  Buyang and Ma,  Shu and Sun,  Weiwei},
  year = {2024},
  month = apr 
}

@article{Feng2021,
  title = {Analysis of Fully Discrete Mixed Finite Element Methods for Time-dependent Stochastic Stokes Equations with Multiplicative Noise},
  volume = {88},
  ISSN = {1573-7691},
  url = {http://dx.doi.org/10.1007/s10915-021-01546-4},
  DOI = {10.1007/s10915-021-01546-4},
  number = {2},
  journal = {Journal of Scientific Computing},
  publisher = {Springer Science and Business Media LLC},
  author = {Feng,  Xiaobing and Qiu,  Hailong},
  year = {2021},
  month = jun 
}

@book{DaPrato1996,
  title = {Ergodicity for Infinite Dimensional Systems},
  ISBN = {9780511662829},
  url = {http://dx.doi.org/10.1017/CBO9780511662829},
  DOI = {10.1017/cbo9780511662829},
  publisher = {Cambridge University Press},
  author = {Da Prato,  G. and Zabczyk,  J.},
  year = {1996},
  month = may 
}

@article{Gess2016,
  title = {Long‐Time Behavior,  Invariant Measures,  and Regularizing Effects for Stochastic Scalar Conservation Laws},
  volume = {70},
  ISSN = {1097-0312},
  url = {http://dx.doi.org/10.1002/cpa.21646},
  DOI = {10.1002/cpa.21646},
  number = {8},
  journal = {Communications on Pure and Applied Mathematics},
  publisher = {Wiley},
  author = {Gess,  Benjamin and Souganidis,  Panagiotis E.},
  year = {2016},
  month = may,
  pages = {1562–1597}
}

@article{Liu2011,
  title = {Existence and Uniqueness of Invariant Measures for Stochastic Evolution Equations with Weakly Dissipative Drifts},
  volume = {16},
  ISSN = {1083-589X},
  url = {http://dx.doi.org/10.1214/ECP.v16-1643},
  DOI = {10.1214/ecp.v16-1643},
  number = {none},
  journal = {Electronic Communications in Probability},
  publisher = {Institute of Mathematical Statistics},
  author = {Liu,  Wei and Toelle,  Jonas},
  year = {2011},
  month = jan 
}

@article{Goldys2005,
  title = {Exponential ergodicity for stochastic Burgers and 2D Navier–Stokes equations},
  volume = {226},
  ISSN = {0022-1236},
  url = {http://dx.doi.org/10.1016/j.jfa.2004.12.009},
  DOI = {10.1016/j.jfa.2004.12.009},
  number = {1},
  journal = {Journal of Functional Analysis},
  publisher = {Elsevier BV},
  author = {Goldys,  B. and Maslowski,  B.},
  year = {2005},
  month = sep,
  pages = {230–255}
}

@article{Klioba2024,
    author = {Klioba, Katharina and Veraar, Mark},
    title = {Pathwise uniform convergence of time discretization schemes for SPDEs},
    journal = {IMA Journal of Numerical Analysis},
    pages = {drae055},
    year = {2024},
    month = {10},
    issn = {0272-4979},
    doi = {10.1093/imanum/drae055},
    url = {https://doi.org/10.1093/imanum/drae055},
    eprint = {https://academic.oup.com/imajna/advance-article-pdf/doi/10.1093/imanum/drae055/59800144/drae055.pdf},
}

@article{Doghman2022,
  title = {Numerical and convergence analysis of the stochastic Lagrangian averaged Navier–Stokes equations},
  volume = {414},
  ISSN = {0377-0427},
  url = {http://dx.doi.org/10.1016/j.cam.2022.114446},
  DOI = {10.1016/j.cam.2022.114446},
  journal = {Journal of Computational and Applied Mathematics},
  publisher = {Elsevier BV},
  author = {Doghman,  Jad and Goudenège,  Ludovic},
  year = {2022},
  month = nov,
  pages = {114446}
}

@Article{MR2465711,
  author     = {Gy\"{o}ngy, Istv\'{a}n and Millet, Annie},
  journal    = {Potential Anal.},
  title      = {Rate of convergence of space time approximations for stochastic evolution equations},
  year       = {2009},
  issn       = {0926-2601},
  number     = {1},
  pages      = {29--64},
  volume     = {30},
  doi        = {10.1007/s11118-008-9105-5},
  fjournal   = {Potential Analysis. An International Journal Devoted to the Interactions between Potential Theory, Probability Theory, Geometry and Functional Analysis},
  mrclass    = {60H15 (60F99 60H35 65C30)},
  mrnumber   = {2465711},
  mrreviewer = {Maria E. Vares},
  url        = {https://doi.org/10.1007/s11118-008-9105-5},
}

@Article{MR3843574,
  author   = {Kamrani, Minoo and Hosseini, S. Mohammad and Hausenblas, Erika},
  journal  = {Math. Methods Appl. Sci.},
  title    = {Implicit {E}uler method for numerical solution of nonlinear stochastic partial differential equations with multiplicative trace class noise},
  year     = {2018},
  issn     = {0170-4214,1099-1476},
  number   = {13},
  pages    = {4986--5002},
  volume   = {41},
  doi      = {10.1002/mma.4946},
  fjournal = {Mathematical Methods in the Applied Sciences},
  mrclass  = {65M75 (35R60 60H15)},
  mrnumber = {3843574},
  url      = {https://doi.org/10.1002/mma.4946},
}

@Article{MR4410739,
  author   = {Droniou, J\'{e}r\^{o}me and Goldys, Beniamin and Le, Kim-Ngan},
  journal  = {IMA J. Numer. Anal.},
  title    = {Design and convergence analysis of numerical methods for stochastic evolution equations with {L}eray-{L}ions operator},
  year     = {2022},
  issn     = {0272-4979},
  number   = {2},
  pages    = {1143--1179},
  volume   = {42},
  doi      = {10.1093/imanum/draa105},
  fjournal = {IMA Journal of Numerical Analysis},
  mrclass  = {65M75 (35Q60 60H35 65M12)},
  mrnumber = {4410739},
  url      = {https://doi.org/10.1093/imanum/draa105},
}

@Article{MR4565984,
  author   = {Bauzet, Caroline and Nabet, Flore and Schmitz, Kerstin and Zimmermann, Aleksandra},
  journal  = {ESAIM Math. Model. Numer. Anal.},
  title    = {Convergence of a finite-volume scheme for a heat equation with a multiplicative {L}ipschitz noise},
  year     = {2023},
  issn     = {2822-7840,2804-7214},
  number   = {2},
  pages    = {745--783},
  volume   = {57},
  doi      = {10.1051/m2an/2022087},
  fjournal = {ESAIM. Mathematical Modelling and Numerical Analysis},
  mrclass  = {65M08 (35K05 60H15 80M12)},
  mrnumber = {4565984},
  url      = {https://doi.org/10.1051/m2an/2022087},
}

@article{Hairer2006,
  title = {Ergodicity of the 2D Navier–Stokes equations with degenerate stochastic forcing},
  volume = {164},
  ISSN = {0003-486X},
  url = {http://dx.doi.org/10.4007/annals.2006.164.993},
  DOI = {10.4007/annals.2006.164.993},
  number = {3},
  journal = {Annals of Mathematics},
  publisher = {Annals of Mathematics},
  author = {Hairer,  Martin and Mattingly,  Jonathan},
  year = {2006},
  month = nov,
  pages = {993–1032}
}

@article{Berselli2008,
  title = {Existence of Strong Solutions for Incompressible Fluids with Shear Dependent Viscosities},
  volume = {12},
  ISSN = {1422-6952},
  url = {http://dx.doi.org/10.1007/s00021-008-0277-y},
  DOI = {10.1007/s00021-008-0277-y},
  number = {1},
  journal = {Journal of Mathematical Fluid Mechanics},
  publisher = {Springer Science and Business Media LLC},
  author = {Berselli,  Luigi C. and Diening,  Lars and Růžička,  Michael},
  year = {2008},
  month = sep,
  pages = {101–132}
}

\end{document}